\newcommand\R{{\mathbb{R}}}
\newcommand\C{{\mathbb{C}}}
\newcommand\D{{\mathbf{D}}}
\newcommand\I{{\mathbf{I}}}
\renewcommand\P{{\mathbf{P}}}
\newcommand\E{{\mathbf{E}}}
\renewcommand\Im{{\operatorname{Im}}}
\renewcommand\Re{{\operatorname{Re}}}
\newcommand\eps{{\varepsilon}}
\newcommand\trace{\operatorname{trace}}
\newcommand\dist{\operatorname{dist}}
\newcommand\Span{\operatorname{span}}
\newcommand\sgn{\operatorname{sgn}}
\renewcommand\a{x}
\renewcommand\b{y}
\newcommand\BBC{{\mathbb C}}
\def\x{{\bf X}}
\def\y{{\bf Y}}
\def\mb{\mbox}
\theoremstyle{plain}
  \newtheorem{theorem}[subsection]{Theorem}
  \newtheorem{conjecture}[subsection]{Conjecture}
  \newtheorem{proposition}[subsection]{Proposition}
  \newtheorem{lemma}[subsection]{Lemma}
  \newtheorem{corollary}[subsection]{Corollary}
\theoremstyle{remark}
  \newtheorem{remark}[subsection]{Remark}
\theoremstyle{definition}
  \newtheorem{definition}[subsection]{Definition}
\begin{document}

\title[Universality of ESDs and the circular law]{Random matrices:\\ Universality of ESDs and the circular law}

\author{Terence Tao}
\address{Department of Mathematics, UCLA, Los Angeles CA 90095-1555}
\email{tao@@math.ucla.edu}

\author{Van Vu}
\address{Department of Mathematics, Rutgers University, Piscataway NJ 08854-8019}
\email{vanvu@@math.rutgers.edu}

\author{Manjunath Krishnapur (Appendix)}
\address{Department of Mathematics, U. Toronto, Toronto Canada, MS5 2E4}
\email{manju@@math.toronto.edu}

\begin{abstract}
Given an $n \times n$ complex matrix $A$, let

$$\mu_{A}(x,y):=
\frac{1}{n} |\{1\le i \le n, \Re \lambda_i \le x, \Im \lambda_i \le
y\}|$$ be the empirical spectral distribution (ESD) of its
eigenvalues
 $\lambda_i \in \BBC, i=1, \dots n$.

 We consider the limiting distribution (both in probability and in the almost sure convergence sense)
 of the normalized ESD $\mu_{\frac{1}{\sqrt{n}} A_n}$
 of a random matrix $A_n = (a_{ij})_{1 \leq i,j \leq n}$
 where the random variables $a_{ij} - \E(a_{ij})$ are iid copies of a
  fixed random variable $x$ with unit variance.  We prove a
  \emph{universality principle} for such ensembles, namely that the
  limit distribution in question is {\it independent} of the actual choice of
  $x$. In particular, in order to compute this distribution, one can assume that
   $x$ is real of complex gaussian.  As a related result, we show how laws for this ESD follow from laws for the \emph{singular} value distribution of $\frac{1}{\sqrt{n}} A_n - zI$ for complex $z$.

As a corollary we establish  the Circular Law conjecture (both almost surely and in probability), that asserts that $\mu_{\frac{1}{\sqrt{n}} A_n}$ converges to the uniform measure on the unit disk when the $a_{ij}$ have zero mean.
\end{abstract}

\maketitle

\section{Introduction}


\subsection{Empirical spectral distributions}

This paper is concerned with the convergence of empirical spectral distributions of
random matrices, both in the sense of convergence in probability and in the almost sure sense.

\begin{definition}[Modes of convergence]\label{weak-def}  For each $n$,
let $F_n$ be a random variable taking values in some Hausdorff topological space $X$,
and let $F$ be another element of $X$.
\begin{itemize}
\item We say that $F_n$ \emph{converges in probability}
to $F$ if for every neighbourhood $V$ of $F$, we have $\lim_{n \to \infty} \P( F_n \in V )
 = 1$.
\item We say that $F_n$ \emph{converges almost surely}
to $F$ if we have $\P( \lim_{n \to \infty} F_n = F ) = 1$.
\end{itemize}
Similarly, if $X_n$ is a scalar random variable, we say that $X_n$
is \emph{bounded in probability} if we have
$$
\lim_{C \to \infty} \liminf_{n \to \infty} \P( |X_n| \leq C ) = 1
$$
and \emph{almost surely bounded} if we have
$$
\P( \limsup_n |X_n| < \infty ) = 1.
$$
\end{definition}

Let $M_n (\BBC)$ denote the set of $n \times n$ complex matrices.
For $A \in M_n(\BBC)$,  we let
$$\mu_{A}(s,t):=
\frac{1}{n} |\{1\le i \le n, \Re \lambda_i \le s, \Im \lambda_i \le
t \}|$$

be the \emph{empirical spectral distribution} (ESD) of its  eigenvalues
 $\lambda_i \in \BBC, i=1, \dots n$.
 This is a discrete probability measure on $\BBC$.

Now suppose that $A_n \in M_n(\BBC)$ is a random matrix ensemble
(i.e. a probability distribution on $M_n(\BBC)$), and let
$\mu_\infty$ be a probability measure on $\BBC$. We give the space of probability measures on $\BBC$ the usual \emph{vague topology}, thus a sequence of deterministic measures $\mu_n$ converges to $\mu$ if $\int_{\BBC} f\ d\mu_n$ converges to $\int_{\BBC} f\ d\mu$ for every \emph{test function} (i.e. continuous and compactly supported function) $f: \BBC \to \R$.  Thus, by Definition \ref{weak-def}, we see that $\mu_{\frac{1}{\sqrt{n}} A_n}$ converge in probability to $\mu_\infty$ if for
    every continuous and compactly supported function $f: \BBC \to \R$, the expression
\begin{equation}\label{fuz}
 \int_\BBC f(z)\ d\mu_{\frac{1}{\sqrt{n}} A_n}(z) - \int_\BBC f(z)\ d\mu_\infty
\end{equation}
converges to zero in probability, thus
$$ \lim_{n \to \infty} \P( |\int_\BBC f(z)\ d\mu_{\frac{1}{\sqrt{n}} A_n}(z) - \int_\BBC f(z)\ d\mu_\infty| \ge \eps ) = 0$$
for every $\eps > 0$.  Similarly, $\mu_{\frac{1}{\sqrt{n}} A_n}$ converges almost surely to $\mu_\infty$ if with probability $1$, the expression \eqref{fuz} converges to zero for all $f: \BBC \to \R$.

\begin{remark}
In practice, our matrices $A_n$ will have bounded entries on the
average, which suggests (by the Weyl comparision inequality, see
Lemma \ref{compar}) that their eigenvalues should be of size about $O(\sqrt{n})$; thus the
  normalization by $\frac{1}{\sqrt{n}}$ is natural.\end{remark}

\subsection{Universality}

A fundamental problem in the theory of random matrices is to
determine the limiting distribution of the ESD of a random matrix
ensemble (either in probability or in the almost sure sense), as the size of the random matrix tends to infinity.

The  situation  with this problem, so far,  is that the analysis
depends very much on which ensemble one is dealing with. In some
cases such as when the entries have  gaussian distribution, powerful
group-theoretic structure (e.g. invariance under the orthogonal
group $O(n)$ or unitary group $U(n)$) plays an essential role, as
one can use it to derive  an explicit formula for the joint
distribution of the eigenvalues. The limiting distribution can then
be computed directly from this formula. In the majority of cases,
however, there is little symmetry, and such a formula is not
available. Consequently, the problem becomes much harder and its
analysis typically requires tools from various areas of mathematics.

On the other hand, there is a well-known intuition behind this
problem (and many others concerning random matrices), the
\emph{universality} phenomenon, that asserts that the limiting
distribution should not depend on the particular distribution of the
entries. This phenomenon motivates many theorems and conjectures in
the area. In the following, we mention two famous examples, Wigner's
semi-circle law and the Circular Law conjecture.

{\it Wigner's semi circle law.} In the 1950's, motivated by
numerical experiments, Wigner \cite{wig} proved that the ESD of an
$n \times n$ hermitian matrix with (upper diagonal) entries being
iid gaussian random variables converge to the semi-circle law $F$ whose
density is given by

$$ \rho(x)= \begin{cases} \frac{1}{2\pi} \sqrt {4-x^2}, &|x| \le 2 \\ 0,
&|x| > 2. \end{cases} $$

Wigner's result (which holds for both modes of convergence) was later extended to many other ensembles. The most
general form only requires the mean and variance of the entries
\cite{Pas,bai}:

\begin{theorem} \label{theorem:SCL}
Let $A_n$ be the $n \times n$ hermitian random matrix whose upper
diagonal entries are iid complex random variables with mean 0 and
variance 1. Then the ESD of $\frac{1}{\sqrt n}A_n$ converges (both in probability and in the almost sure sense) to the
semi-circle distribution.
\end{theorem}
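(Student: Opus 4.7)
The plan is to use the classical method of moments. For each integer $k \geq 0$, the $k$th moment of the ESD is a normalized trace,
$$\int_\R s^k\, d\mu_{\frac{1}{\sqrt n} A_n}(s) = \frac{1}{n}\tr\Big(\frac{A_n}{\sqrt n}\Big)^k = \frac{1}{n^{1+k/2}} \sum_{i_1,\dots,i_k \in [n]} a_{i_1 i_2} a_{i_2 i_3} \cdots a_{i_k i_1},$$
so it suffices to show that this random variable converges (in expectation and with summable variance) to the $k$th moment of the semicircle density $\rho$, which equals the Catalan number $C_{k/2} = \binom{k}{k/2}/(k/2+1)$ for even $k$ and $0$ for odd $k$. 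Vague convergence of measures will then follow by a Weierstrass-type approximation of test functions by polynomials, using that $\rho$ is compactly supported and determined by its moments, provided the empirical measures are tight.

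For the expectation, I would expand the sum above and classify each term by the isomorphism class of the closed walk $i_1 \to i_2 \to \cdots \to i_k \to i_1$ on $[n]$ it represents. Since the $a_{ij}$ have mean zero and the upper-triangular entries are independent (with the Hermitian constraint $a_{ji} = \overline{a_{ij}}$), a walk contributes nontrivially to the expectation only when every undirected edge is traversed at least twice. A careful counting argument --- controlling separately the number of walks of a given isomorphism type and the magnitude of the expectation associated to each --- shows that, after dividing by $n^{1+k/2}$, the only class surviving the $n\to\infty$ limit is the one in which the underlying multigraph is a tree on $k/2+1$ vertices with every edge used exactly twice. Such ``doubled-tree'' walks are in natural bijection with non-crossing pair partitions of $\{1,\dots,k\}$, whose count is $C_{k/2}$; this matches the moments of $\rho$. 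A similar but more involved expansion of $\Var\bigl(\frac{1}{n}\tr(A_n/\sqrt n)^k\bigr)$ yields an $O_k(n^{-2})$ bound, since a non-vanishing covariance between two length-$k$ cycles forces them to share an edge, reducing the number of free indices by at least two. Borel--Cantelli then upgrades convergence in expectation to almost sure convergence (which in particular gives convergence in probability).

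The principal obstacle is that the hypothesis provides only finite variance, whereas the combinatorial expansion requires bounds on $\E|a_{ij}|^{2m}$ for all $m$. The standard remedy is a truncation: replace $a_{ij}$ by $\tilde a_{ij} := a_{ij} \mathbf{1}_{|a_{ij}| \le \delta_n \sqrt n}$, then re-center and renormalize, with $\delta_n \to 0$ slowly enough that the variance assumption alone controls the error. The Hoffman--Wielandt inequality (a close relative of Lemma~\ref{compar}) bounds the perturbation of the ESD in terms of the Frobenius norm of $A_n - \tilde A_n$, which is $o(\sqrt n)$ almost surely by a Chebyshev/dominated-convergence estimate on the truncated tails. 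Once the entries are bounded one gets the moment and variance estimates above for the truncated matrix, and the truncation error is vague-topology negligible. Tightness of the ESDs --- needed to pass from polynomial moments to arbitrary compactly supported test functions --- follows from the crude tail bound $\int s^{2k}\, d\mu_{A_n/\sqrt n}(s) = O_k(1)$ for large $k$, which confines almost all spectral mass to $[-2-\eps,2+\eps]$. The delicate combinatorial step --- checking that any walk whose multigraph has fewer than $k/2+1$ vertices or an edge of multiplicity $\geq 3$ contributes $o(1)$ after rescaling --- is where essentially all the technical work sits.
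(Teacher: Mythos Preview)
The paper does not actually prove Theorem~\ref{theorem:SCL}; it is stated in the introduction as a known background result and attributed to \cite{Pas,bai}. Your sketch follows the classical moment method with truncation, which is exactly the approach in those references (see in particular \cite[Chapter 2]{bai}), so in that sense your proposal matches what the paper points to rather than anything the paper itself does. The outline you give is correct and standard: expand the normalized trace as a sum over closed walks, show only doubled-tree walks survive, identify their count as a Catalan number, bound the variance by $O_k(n^{-2})$ via shared-edge counting, and handle the lack of higher moments by truncation plus Hoffman--Wielandt.
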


{\it Circular Law Conjecture.} The well-known Circular Law conjecture deals
with  non-hermitian matrices.

\begin{conjecture} \label{conj:CL}
Let $A_n$ be the $n \times n$  random matrix whose  entries are iid
complex random variables with mean 0 and variance 1. Then the ESD of
$\frac{1}{\sqrt n} A_n$ converges (both in probability and in the almost sure sense) to the uniform distribution on the
unit disk.
\end{conjecture}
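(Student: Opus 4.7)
The plan is to deduce the Circular Law as a direct corollary of the two universality-type results announced in the abstract. By the universality principle, the limiting distribution of $\mu_{\frac{1}{\sqrt n}A_n}$ (in both senses of convergence) depends only on the mean and variance of the iid entries $a_{ij}$, so one may freely replace the atom $x$ by any convenient choice with the same two moments. I would take $x$ to be a standard complex Gaussian, so that $A_n$ becomes the complex Ginibre ensemble. For this ensemble the joint eigenvalue density is explicit, the eigenvalues form a determinantal point process with a known kernel, and the convergence of $\mu_{\frac{1}{\sqrt n}A_n}$ to the uniform measure on the unit disk---both in probability and almost surely---is classical (Mehta, Ginibre, Silverstein, Edelman). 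Invoking universality then finishes the job, since the hypothesis $\E x = 0$, $\Var(x) = 1$ of Conjecture~\ref{conj:CL} is exactly the hypothesis required by the universality theorem.

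A more self-contained alternative is to use the paper's companion reduction: convergence of $\mu_{\frac{1}{\sqrt n}A_n}$ follows from convergence of the singular value distributions of $\frac{1}{\sqrt n} A_n - zI$ for (almost every) complex $z$. The mechanism is Girko's Hermitization trick: for a smooth test function $f$, writing $f = \frac{1}{2\pi}\Delta g$, one has $\int f\, d\mu_{\frac{1}{\sqrt n}A_n}(z) = \frac{1}{2\pi n}\int \Delta g(z)\, \log|\det(\frac{1}{\sqrt n}A_n - zI)|\, dz = \frac{1}{2\pi n}\int \Delta g(z) \sum_j \log s_j(z)\, dz$, where the $s_j(z)$ are the singular values of $\frac{1}{\sqrt n}A_n - zI$. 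A shifted Marchenko--Pastur-type law for these singular values then produces the logarithmic potential of the uniform measure on the disk, and inverting the Laplacian recovers the disk itself.

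Whichever route one takes, the main obstacle is the same, and it is the reason the Circular Law has been open under moment-only hypotheses for so long: the logarithmic singularity of $\log s$ at $s=0$. To turn convergence of empirical singular value measures into convergence of their logarithms (equivalently, to justify the exchange of limit and singular integral), one needs a quantitative lower tail bound preventing too many singular values of $\frac{1}{\sqrt n}A_n - zI$ from clustering anomalously near the origin; in particular, the least singular value must be shown to be polynomially bounded below with overwhelming probability, uniformly in $z$ on compact sets. This is the step where recent progress on invertibility of iid random matrices (Rudelson, Rudelson--Vershynin, Tao--Vu) enters decisively, and it is the hard part of the proof. Once such a tail estimate is secured, the remaining pieces---truncation of $\log s$, control of the bulk of singular values by a Marchenko--Pastur-type estimate, uniformity in $z$, and the passage from in-probability to almost-sure statements---are comparatively routine, and one passes from vague convergence of singular value laws to vague convergence of the ESD.
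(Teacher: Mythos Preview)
Your proposal is correct and takes essentially the same approach as the paper: deduce the Circular Law from the universality principle (Corollary~\ref{cor:main1}) by specializing to the complex Ginibre ensemble, for which the law is classical, noting that with $M_n=0$ the hypotheses \eqref{eqn:conditionM} and the convergence of \eqref{must} are trivially satisfied. Your second and third paragraphs accurately describe the machinery behind the universality principle itself (Girko's Hermitization, the Dozier--Silverstein singular value law, and the least singular value bound), but for the deduction of the Circular Law from that principle nothing further is needed.
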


Similarly to Wigner's law, this conjecture was posed, based on
numerical evidence, in the 1950's. The case when the
entries have complex gaussian distribution was verified by Mehta
\cite{meh} in 1967, using Ginibre's formula for the joint density
function of the eigenvalues of $A_n$ (see, for example,
\cite[Chapter 10]{bai}):

\begin{equation} \label{eqn:ginibre} p(\lambda_1, \dots, \lambda_n)=
c_n \prod_{i < j} |\lambda_i -\lambda_j|^2 \exp(-n \sum_{i=1}^n
|\lambda_i|^2). \end {equation}

Another case where such a formula is available is when the entries
have real gaussian distribution, and for this case the conjecture
was confirmed by Edelman \cite{edel}. For the general case when
there is no formula, the problem appears much harder. Important
partial results were obtained by Girko \cite{girko, twenty}, Bai
\cite{bai-first, bai}, and more recently G\"otze-Tikhomirov
\cite{gotze, gt2}, Pan-Zhou \cite{PZ} and the authors
\cite{TV-circular}. These results establish the conjecture (in almost sure or in probability forms) under
additional assumptions on the distribution $\a$. The strongest result in the previous literature is from \cite{TV-circular, gt2} in which the almost sure and in probability forms of the conjecture respectively were shown under the extra
assumption that the entries have finite $(2+\epsilon)$-th moment for
any positive constant $\epsilon$. An attempt to remove this extra
$\epsilon$ (and thus proving Conjecture \ref{conj:CL} in full
generality) was a motivation for this paper.

\vskip2mm
A demonstration of the circular law for the Bernoulli and the Gaussian case appears in Figure~\ref{figure:CircLaw}.
\begin{figure}
\centerline{\textbf{Bernoulli \hspace{1.8in} Gaussian }}
\begin{center}
\scalebox{.3}{\includegraphics{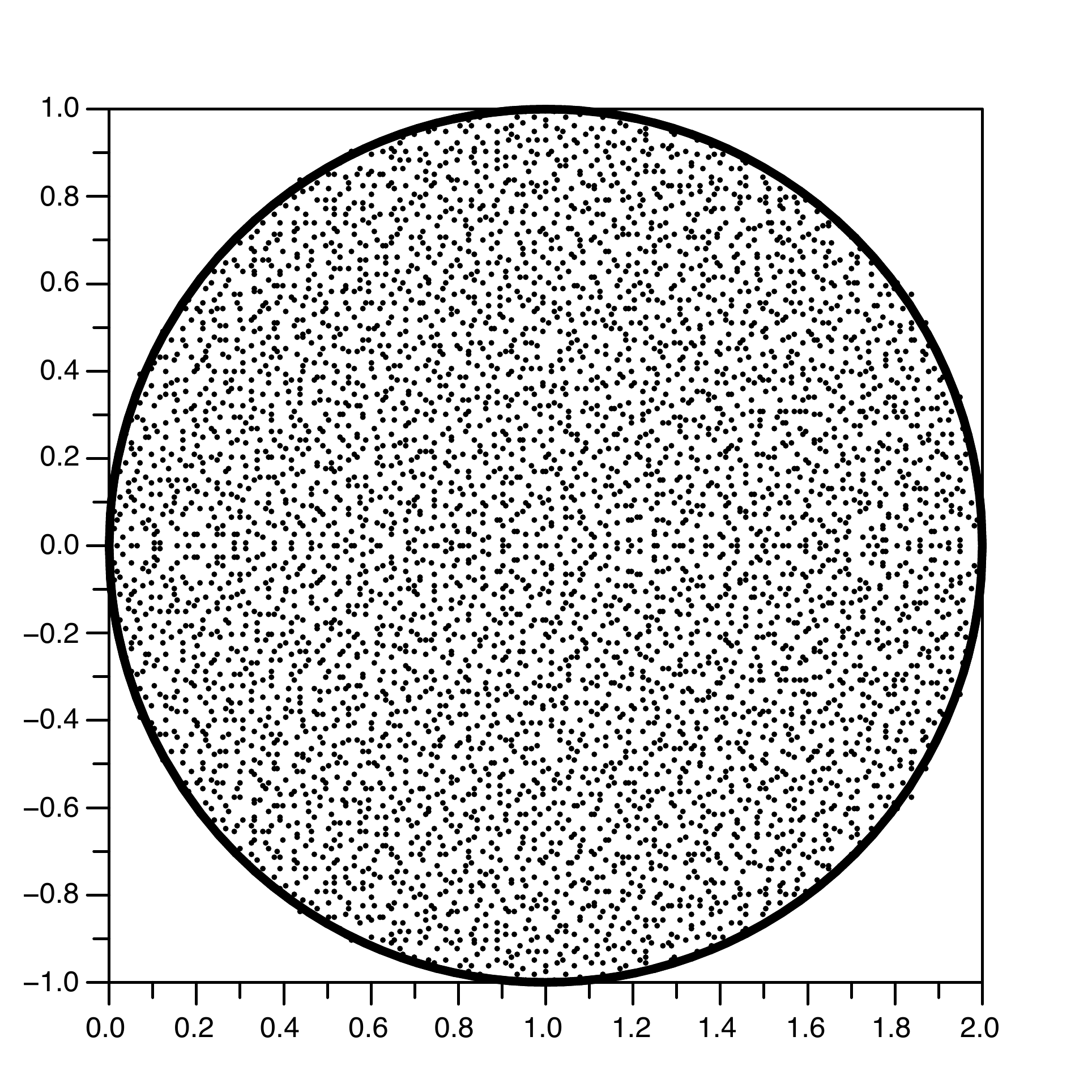}}
\scalebox{.3}{\includegraphics{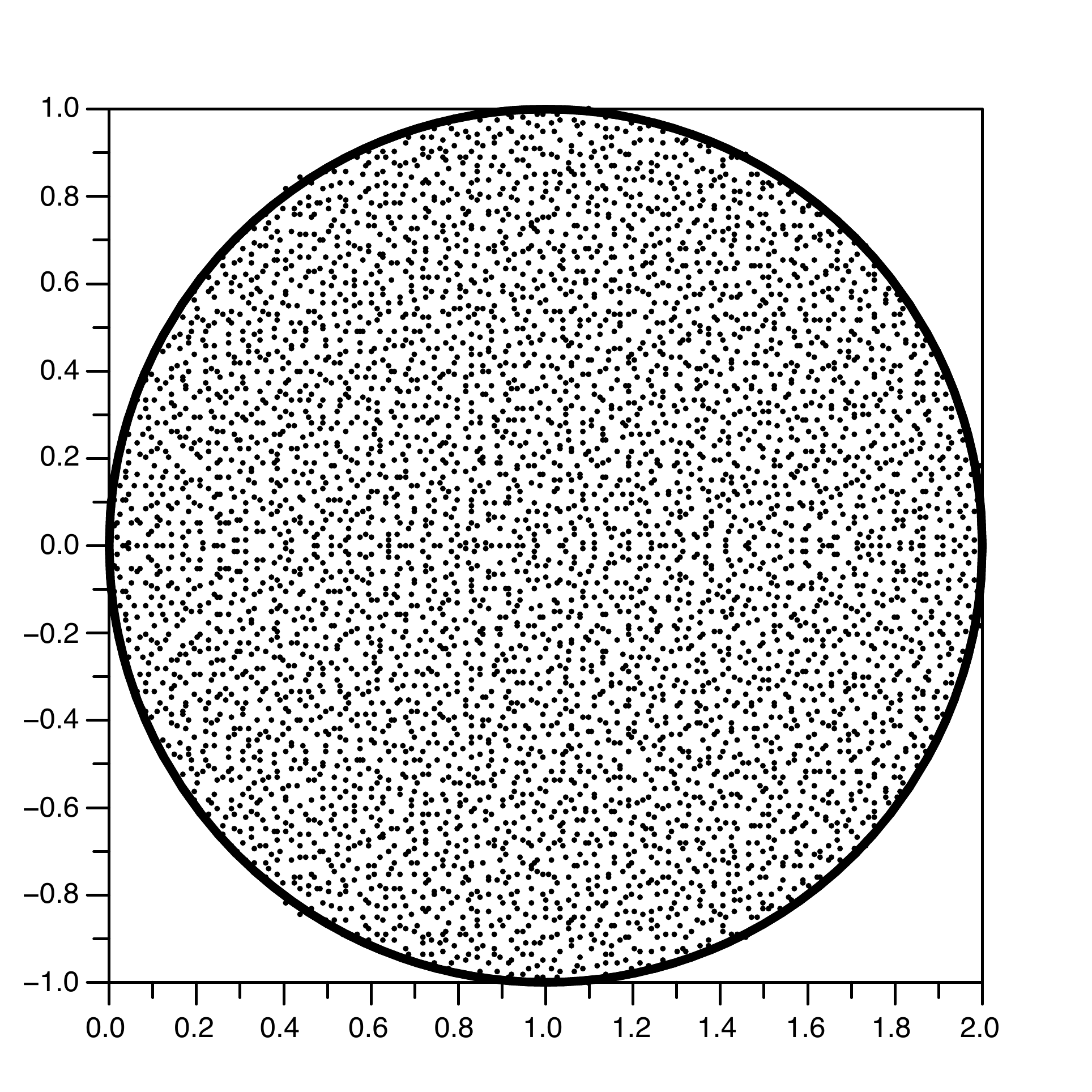}}
\end{center}
\caption{Eigenvalue plots of two randomly generated 5000 by 5000 matrices.  On the left, each entry was an iid Bernoulli random variable, taking the values $+1$ and $-1$ each with probability $1/2$.  On the right, each entry was an iid Gaussian normal random variable, with probability density function is $\frac{1}{\sqrt{2*\pi}} \exp( -x^2/2 ) $.  (These two distributions were shifted by adding the identity matrix, thus the circles are centered at $(1,0)$ rather than at the origin.)}
\label{figure:CircLaw}
\end{figure}

In both the semi-circular law and the circular law, we observe that only the mean and variance of the
entries play a role in the limiting distribution. This is a common
situation, in fact, for many other conjectures in random matrix
theory, such as Dyson's conjecture \cite[Chapter 1]{meh}, and this phenomenon
sometimes referred to as \emph{universality} in the literature.

In this paper, we rigorously  prove the universality phenomenon for
the ESD of random matrices. More precisely, we show that the
limiting distribution of the ESD of a random matrix ensemble $A_n$ depends
only the mean and variance of its entries, under a mild size condition on the mean $\E A_n$, and
under the assumption that the matrix $A_n - \E A_n$ has iid entries.

For any matrix $A$, we define the \emph{Hilbert-Schmidt norm} $\|A\|_2$ by the formula $\|A\| := \trace(AA^\ast)^{1/2} = \trace(A^\ast A)^{1/2}$.

\begin{theorem}[Universality principle]\label{theorem:main1}
Let $\a$ and $ \b$ be complex random variables with zero mean and
unit variance. Let $X_n =  (\a_{ij})_{1 \leq i,j \leq n}$ and $Y_n := (\b_{ij})_{1 \leq i,j \leq n}$ be $n \times n$ random matrices whose entries $\a_{ij}$, $\b_{ij}$ are iid copies of $\a$ and $\b$, respectively. For
each $n$, let $M_n$ be a deterministic $n \times n$ matrix
satisfying
\begin{equation} \label{eqn:conditionM}  \sup_n \frac{1}{n^2} \|M_n\|_2^2 < \infty. \end{equation}
Let $A_n:= M_n + X_n$ and $B_n:= M_n
+Y_n$. Then $\mu_{\frac{1}{\sqrt{n}} A_n} - \mu_{\frac{1}{\sqrt{n}}
B_n}$ converges in probability to zero.  If furthermore we make the additional hypothesis that the ESDs \begin{equation}\label{must}
\mu_{(\frac{1}{\sqrt{n}} M_n-zI) (\frac{1}{\sqrt{n}} M_n-zI)^\ast}
\end{equation}
converge to a limit for almost every $z$, then $\mu_{\frac{1}{\sqrt{n}} A_n} - \mu_{\frac{1}{\sqrt{n}} B_n}$ converges almost surely to zero.
\end{theorem}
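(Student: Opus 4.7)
The natural plan is to run Girko's Hermitization trick to convert the statement about eigenvalue ESDs of the non-Hermitian matrices into a statement about log-determinants, which in turn becomes a statement about the singular value distributions of the shifted matrices $\frac{1}{\sqrt{n}} A_n - zI$ and $\frac{1}{\sqrt{n}} B_n - zI$. Concretely, for any smooth compactly supported test function $f$, the identity $\Delta \log|z-\lambda|=2\pi\delta_{\lambda}$ yields
\begin{equation*}
\int_{\BBC} f(z)\, d\mu_{\frac{1}{\sqrt n} A_n}(z) \;=\; \frac{1}{2\pi n}\int_{\BBC} \Delta f(z)\,\log\bigl|\det(\tfrac{1}{\sqrt n} A_n - zI)\bigr|\, dA(z),
\end{equation*}
and the absolute value of the determinant expands as $\sum_i \log \sigma_i(\tfrac{1}{\sqrt n} A_n - zI) = n\int_0^\infty \log s\,d\nu_{n,z}^A(s)$, where $\nu_{n,z}^A$ is the singular value ESD. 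Thus the difference $\int f\, d(\mu_{\frac{1}{\sqrt n} A_n}-\mu_{\frac{1}{\sqrt n} B_n})$ is controlled by $\int_0^\infty \log s\, d(\nu_{n,z}^A-\nu_{n,z}^B)(s)$, integrated against $\Delta f$ in $z$.

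The next step is to establish that for Lebesgue-a.e.\ $z$, the singular value ESDs $\nu_{n,z}^A$ and $\nu_{n,z}^B$ agree asymptotically. This is a Hermitian universality statement, since $\nu_{n,z}^A$ is determined by the spectrum of the Hermitization $H_n^A(z) = \begin{pmatrix} 0 & \tfrac{1}{\sqrt n}A_n - zI \\ (\tfrac{1}{\sqrt n}A_n - zI)^{\ast} & 0 \end{pmatrix}$, a Wigner-type matrix (with the deterministic part $M_n$ built in). Universality here is a well-developed theory: one can, for example, compare the Stieltjes transforms $\frac{1}{2n}\tr(H_n^A(z)-wI)^{-1}$ and $\frac{1}{2n}\tr(H_n^B(z)-wI)^{-1}$ for $\Im w>0$ by a Lindeberg swap, exchanging entries of $X_n$ for entries of $Y_n$ one at a time and using resolvent identities; the contribution of each swap is $o(1/n)$ uniformly for $w$ bounded away from the real axis, so the $n$ swaps contribute $o(1)$. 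This gives convergence of the Stieltjes transforms, hence weak convergence of $\nu_{n,z}^A - \nu_{n,z}^B$ to zero for a.e.\ $z$.

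The main obstacle is that $\log s$ is unbounded at both $s=0$ and $s=\infty$, so weak convergence of $\nu_{n,z}^A - \nu_{n,z}^B$ to zero does not automatically integrate through $\log s$. The tail at infinity is easy: the hypothesis \eqref{eqn:conditionM} together with the variance assumption gives $\frac{1}{n^2}\|\tfrac{1}{\sqrt n}A_n - zI\|_2^2 = O(1)$ in expectation, which bounds $\int_T^\infty \log s\,d\nu_{n,z}^A(s)$ uniformly. The hard part is the singularity at $s=0$: we must show that $\int_0^\delta |\log s|\,d\nu_{n,z}^A(s)$ is small in probability uniformly in $n$ for $\delta$ small, and this requires quantitative control on the smallest singular values of $\tfrac{1}{\sqrt n}A_n - zI$ (i.e.\ a lower bound of the form $\sigma_n \geq n^{-C}$ with high probability) together with a bound on the number of singular values below any threshold $\eta$ (the ``medium'' singular values, which can be controlled via a Stieltjes-transform bound on $\Im m(E+i\eta)$). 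This is the classical obstruction to the circular law; it must be proved for both $A_n$ and $B_n$, and for $A_n$ it uses the iid structure of $X_n$ via anti-concentration (Littlewood--Offord / inverse Littlewood--Offord) inequalities, which is where the assumption that $A_n - \E A_n$ has iid entries is genuinely exploited.

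Once these three ingredients are in place, the in-probability statement follows by combining the Lindeberg comparison of singular value distributions with the uniform integrability of $\log s$, applied inside the Girko formula. For the almost sure statement, one upgrades convergence in probability to almost sure convergence by a standard concentration-of-measure argument: each functional $\int f\,d\mu_{\frac{1}{\sqrt n} A_n}$ is a Lipschitz function of the entries of $X_n$ through its dependence on singular values (by the Hoffman--Wielandt inequality applied to the Hermitization), so Azuma--McDiarmid-type martingale concentration gives exponentially small fluctuations, and Borel--Cantelli then promotes convergence in probability to almost sure convergence provided the limit of \eqref{must} exists for a.e.\ $z$, which is precisely the extra hypothesis.
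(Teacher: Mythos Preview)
Your overall architecture matches the paper: Girko's log-potential identity reduces the eigenvalue comparison to a log-determinant comparison, which becomes $\int_0^\infty \log s\, d(\nu_{n,z}^A-\nu_{n,z}^B)(s)$; the bulk of the singular value distribution is handled by a Hermitian universality argument, the large-$s$ tail by the second-moment hypothesis, and the small-$s$ region by least-singular-value bounds plus a count of intermediate singular values. Your Lindeberg swap on the resolvent of the Hermitization is a legitimate alternative to the paper's use of Dozier--Silverstein (indeed, the paper's Appendix~C does exactly this, citing Chatterjee).

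There are, however, two genuine gaps. First, your treatment of the ``medium'' singular values is too vague. A bound on $\Im m(E+i\eta)$ at fixed $\eta$ only controls the number of singular values in a window of width $\eta$; to get uniform integrability of $\log s$ near $0$ you need something like $\sigma_{n-i}(\tfrac{1}{\sqrt n}A_n - zI) \gtrsim i/n$ for $n^{0.99}\le i \le \delta n$, i.e.\ a local law down to scale $1/n$. Under only a second-moment assumption this is far from routine. The paper sidesteps this entirely: it uses Talagrand's concentration inequality to show that the distance from each row to the span of the others is $\gtrsim \sqrt{n-d}$ with overwhelming probability, and then the negative second moment identity $\sum_i \sigma_i^{-2} = \sum_i \dist_i^{-2}$ converts this into the required lower bounds on intermediate singular values. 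This is the key new idea that removes the $(2+\eps)$-moment assumption of earlier work, and it is missing from your sketch.

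Second, your almost-sure upgrade is incorrect. The functional $\int f\, d\mu_{\frac{1}{\sqrt n}A_n}$ depends on the \emph{eigenvalues} of the non-Hermitian matrix, and these are emphatically not Lipschitz in the entries (think of perturbations of a Jordan block); Hoffman--Wielandt on the Hermitization controls singular values, not eigenvalues. Even if one rewrites via the Girko formula, $\log\sigma_i$ is unbounded below, so no bounded-difference inequality applies; moreover Azuma/McDiarmid require bounded increments, while the entries have only a second moment. The paper does not upgrade by concentration at all. Instead, for the in-probability statement it uses a subsequence/compactness trick: if convergence in probability failed along some subsequence, pass to a further subsequence on which $\mu_{\frac{1}{n}M_nM_n^\ast}$ converges, and then invoke the almost-sure version. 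For the almost-sure version itself, the extra hypothesis on \eqref{must} is used through Dozier--Silverstein to guarantee that $\nu_{n,z}^A$ and $\nu_{n,z}^B$ converge almost surely to the \emph{same} deterministic limit, which is what makes the bulk contribution vanish almost surely; your Lindeberg swap only gives closeness of expectations of Stieltjes transforms, which is not almost-sure convergence. You have not explained how the extra hypothesis enters.
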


\begin{remark} \label{remark:subsequence}
The theorem still holds if we restrict the size of the matrices to
an infinite subsequence $n_1 < n_2 < \dots $ of positive integers.  This freedom to pass to a subsequence is useful for technical reasons involving compactness arguments.
\end{remark}

The condition \eqref{eqn:conditionM} has the following useful consequence, which we shall use repeatedly:

\begin{lemma}[Tightness of ESDs]\label{tight} Let $M_n$ and $A_n$ be as in Theorem \ref{theorem:main1}.  Then the quantities $\frac{1}{n^2} \|A_n\|_2^2$ and $\int_{\BBC} |z|^2\ d\mu_{\frac{1}{\sqrt{n}} A_n}(z)$ are almost surely bounded (and hence also bounded in probability).
\end{lemma}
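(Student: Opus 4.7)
The plan is to reduce both claims to a strong law of large numbers estimate on $\frac{1}{n^2}\|X_n\|_2^2$, where $X_n := A_n - M_n$ is the i.i.d.\ part of $A_n$.

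\textbf{First claim.} Writing $A_n = M_n + X_n$ and applying the triangle inequality for the Hilbert--Schmidt norm yields
$$\|A_n\|_2^2 \le 2\|M_n\|_2^2 + 2\|X_n\|_2^2.$$
The contribution of the first term, divided by $n^2$, is bounded uniformly in $n$ by hypothesis \eqref{eqn:conditionM}. For the random term,
$$\|X_n\|_2^2 = \sum_{1 \le i, j \le n} |x_{ij}|^2$$
is a sum of i.i.d.\ copies of $|x|^2$, which has expectation $\E|x|^2 = 1$ since $x$ has zero mean and unit variance. Realising all the $X_n$ simultaneously as the top-left submatrices of a single infinite i.i.d.\ array $\{x_{ij}\}_{i,j \ge 1}$, and enumerating the index pairs $(i,j)$ in order of increasing $\max(i,j)$ (so that the $n \times n$ corner is filled by precisely the first $n^2$ samples of this enumeration), the strong law of large numbers gives $\frac{1}{n^2}\|X_n\|_2^2 \to 1$ almost surely. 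Consequently $\limsup_n \frac{1}{n^2}\|A_n\|_2^2 < \infty$ almost surely.

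\textbf{Second claim.} Expanding the definition of $\mu$,
$$\int_{\BBC} |z|^2 \, d\mu_{\frac{1}{\sqrt n} A_n}(z) = \frac{1}{n} \sum_{i=1}^n \left|\frac{\lambda_i(A_n)}{\sqrt n}\right|^2 = \frac{1}{n^2} \sum_{i=1}^n |\lambda_i(A_n)|^2.$$
The Schur/Weyl comparison inequality (Lemma \ref{compar}) gives $\sum_{i=1}^n |\lambda_i(A_n)|^2 \le \|A_n\|_2^2$; indeed, writing the Schur triangularisation $A_n = UTU^*$ with $T$ upper triangular carrying the eigenvalues on its diagonal, one has $\|A_n\|_2^2 = \|T\|_2^2 \ge \sum_i |T_{ii}|^2 = \sum_i |\lambda_i(A_n)|^2$. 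Combining with the first claim yields the second.

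I do not expect any real obstacle here. The only mild point of care is arranging the $X_n$ on a common probability space so that the strong law of large numbers applies uniformly to the entire sequence; the enumeration-by-$\max(i,j)$ above resolves this cleanly. (One could instead give a variance-plus-Borel--Cantelli argument under a stronger moment assumption, but SLLN requires only finite first moment of $|x|^2$, which is guaranteed by the unit variance hypothesis.)
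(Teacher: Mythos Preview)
Your proof is correct and follows essentially the same route as the paper: reduce the eigenvalue bound to the Hilbert--Schmidt bound via the Weyl comparison inequality (Lemma \ref{compar}), then reduce the Hilbert--Schmidt bound on $A_n$ to one on $X_n$ via the triangle inequality and \eqref{eqn:conditionM}, and finally handle $\frac{1}{n^2}\|X_n\|_2^2$ by the strong law of large numbers. Your additional care in arranging the i.i.d.\ array so that the SLLN applies along the subsequence $N=n^2$ is a nice touch that the paper leaves implicit.
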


\begin{proof} By the Weyl comparison inequality (Lemma \ref{compar}) it suffices to show that
$\frac{1}{n^2} \|A_n\|_2^2$ is almost surely bounded.  By \eqref{eqn:conditionM} and the triangle inequality it suffices to show that $\frac{1}{n^2} \|X_n\|_2^2$ is almost surely bounded.  But this follows from the finite second moment of $\a$ and the strong law of large numbers.
\end{proof}

As an immediate corollary of Theorem \ref{theorem:main1}, we have

\begin{corollary}[Universality principle]\label{cor:main1}
Let $\a, \b$ be complex random variables with zero mean and unit
variance. Let $X_n$ and $Y_n$ be $n \times n$ random matrices whose
entries are iid copies of $\a$ and $\b$, respectively. For each $n$,
let $M_n$ be a deterministic $n \times n$ matrix satisfying
\eqref{eqn:conditionM}. Let $A_n:= M_n + X_n$ and $B_n:= M_n
+Y_n$. Then  if $\mu_{\frac{1}{\sqrt{n}} B_n} $ converges in probability  to a
limiting measure $\mu$, then $ \mu_{\frac{1}{\sqrt{n}} A_n}$ also
converges in probability to $\mu$.  If furthermore we make the additional hypothesis that the ESDs \eqref{must} converge to a limit for almost every $z$, then we can replace ``in probability'' by ``almost surely'' in the previous sentence.
\end{corollary}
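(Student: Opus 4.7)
The plan is to deduce this directly from Theorem \ref{theorem:main1} via the triangle inequality applied test-function-by-test-function; there is essentially no independent content beyond Theorem \ref{theorem:main1} itself.

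Fix any continuous and compactly supported test function $f:\BBC\to\R$. I would write
\begin{equation*}
\int_\BBC f\,d\mu_{\frac{1}{\sqrt{n}}A_n} - \int_\BBC f\,d\mu
= \Bigl(\int_\BBC f\,d\mu_{\frac{1}{\sqrt{n}}A_n} - \int_\BBC f\,d\mu_{\frac{1}{\sqrt{n}}B_n}\Bigr)
+ \Bigl(\int_\BBC f\,d\mu_{\frac{1}{\sqrt{n}}B_n} - \int_\BBC f\,d\mu\Bigr).
\end{equation*}
By Theorem \ref{theorem:main1} applied to the pair $(A_n,B_n)$ with common shift $M_n$, the first parenthesized difference tends to zero in probability. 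By the standing hypothesis that $\mu_{\frac{1}{\sqrt{n}}B_n}$ converges in probability to $\mu$, the second parenthesized difference also tends to zero in probability. A union bound then shows the sum tends to zero in probability. Since this holds for every test function $f$, the definition of vague convergence in probability (from the discussion after Definition \ref{weak-def}) yields $\mu_{\frac{1}{\sqrt{n}}A_n}\to\mu$ in probability.

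For the almost sure statement, the extra hypothesis that the ESDs in \eqref{must} converge for almost every $z$ activates the almost-sure conclusion of Theorem \ref{theorem:main1}: there is a single event $E_1$ of probability one on which, for every test function $f$, the first parenthesized difference above tends to zero. The almost-sure hypothesis on $\mu_{\frac{1}{\sqrt{n}}B_n}$ supplies a second event $E_2$ of probability one on which, for every $f$, the second difference tends to zero. On the intersection $E_1\cap E_2$, which still has probability one, the triangle inequality gives $\int_\BBC f\,d\mu_{\frac{1}{\sqrt{n}}A_n}\to\int_\BBC f\,d\mu$ simultaneously for all $f$, which is the desired almost sure vague convergence.

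There is no serious obstacle; all the work sits inside Theorem \ref{theorem:main1}. The only point to be careful about is that almost sure vague convergence requires a single probability-one event valid for all test functions simultaneously, not a separate null set per $f$. This is handled automatically since Theorem \ref{theorem:main1} is itself phrased with this quantifier order, so the intersection of two probability-one events suffices without any separability argument.
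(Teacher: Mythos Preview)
Your proposal is correct and is exactly the immediate deduction the paper intends: the paper offers no separate proof of this corollary, simply prefacing it with ``As an immediate corollary of Theorem \ref{theorem:main1}, we have''. Your triangle-inequality argument and your observation about the quantifier order in the almost-sure case are the right way to unpack this.
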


A demonstration of this corollary appears in Figure~\ref{figure:MainThm}.
\begin{figure}
\centerline{\textbf{Bernoulli \hspace{1.8in} Gaussian }}
\begin{tabbing}
\qquad
\=
\scalebox{.3}{\includegraphics{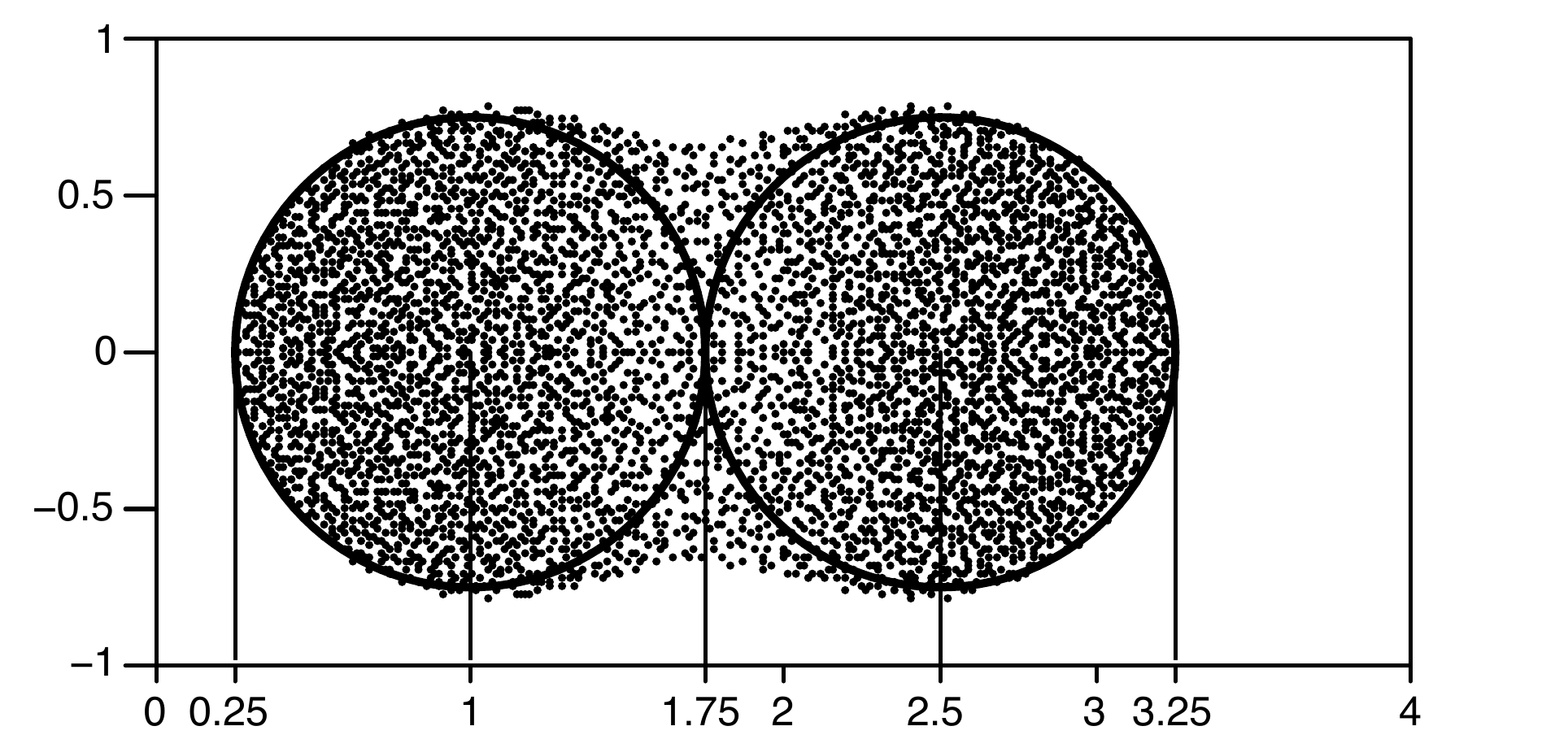}}
\=
\scalebox{.3}{\includegraphics{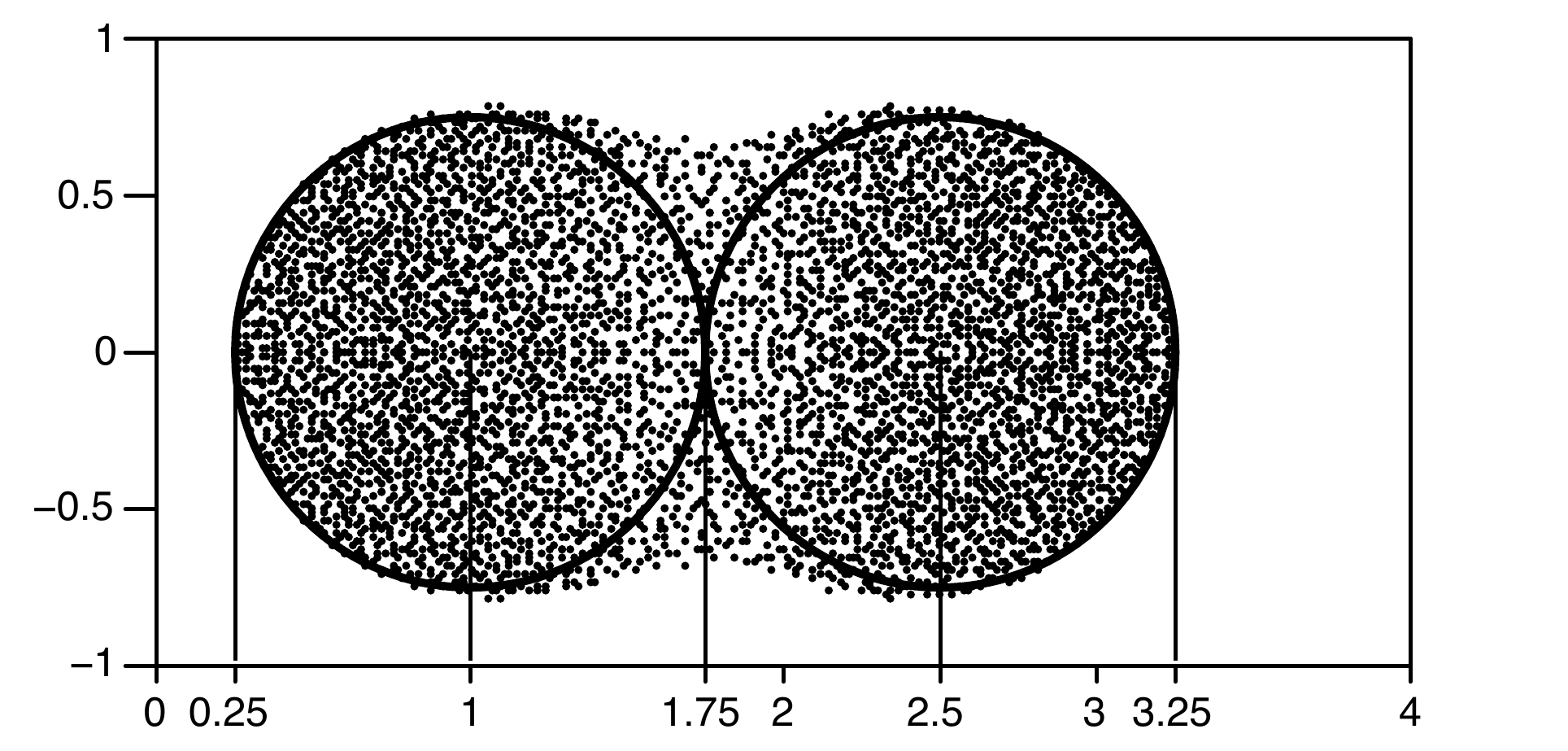}} \\
\>
\scalebox{.3}{\includegraphics{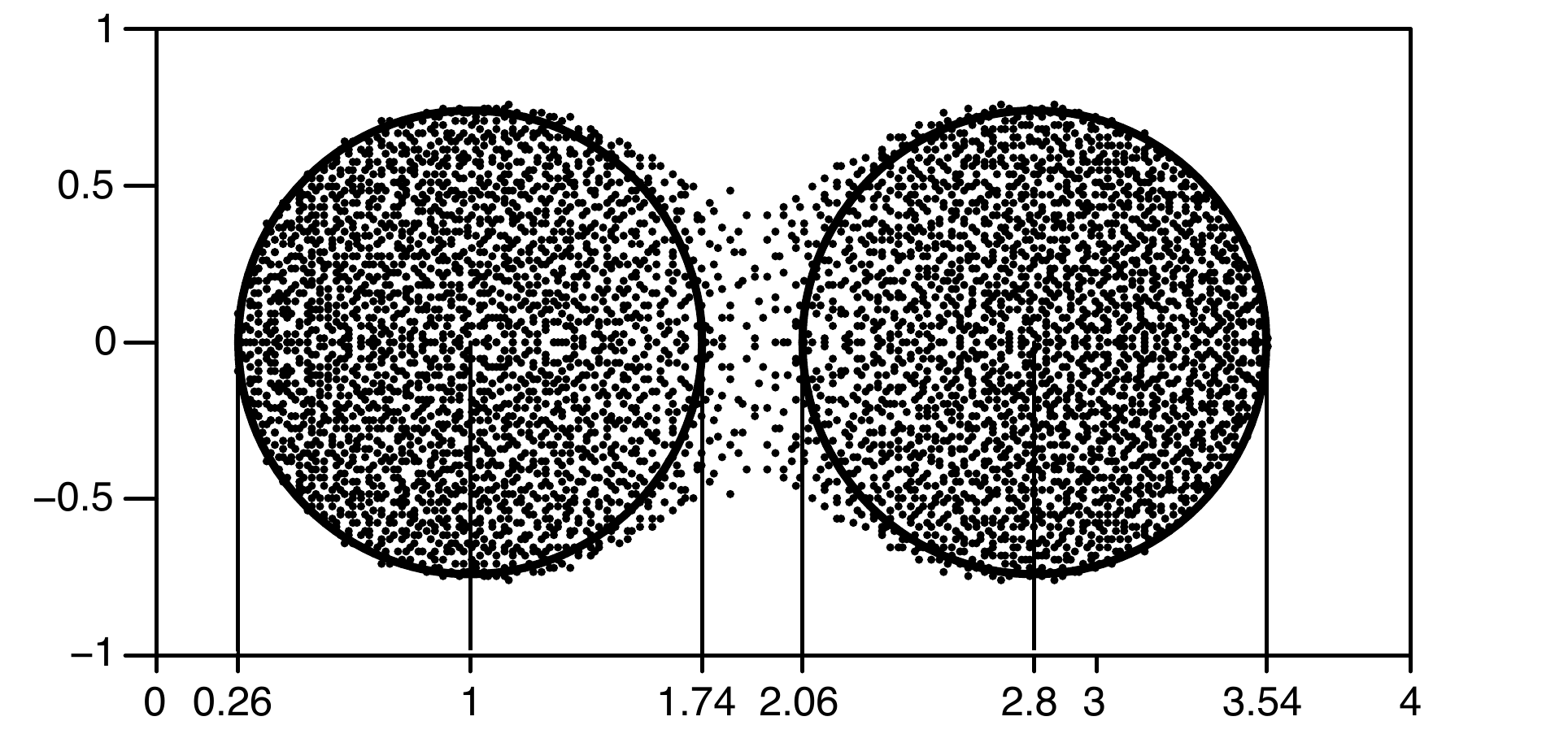}}
\>
\scalebox{.3}{\includegraphics{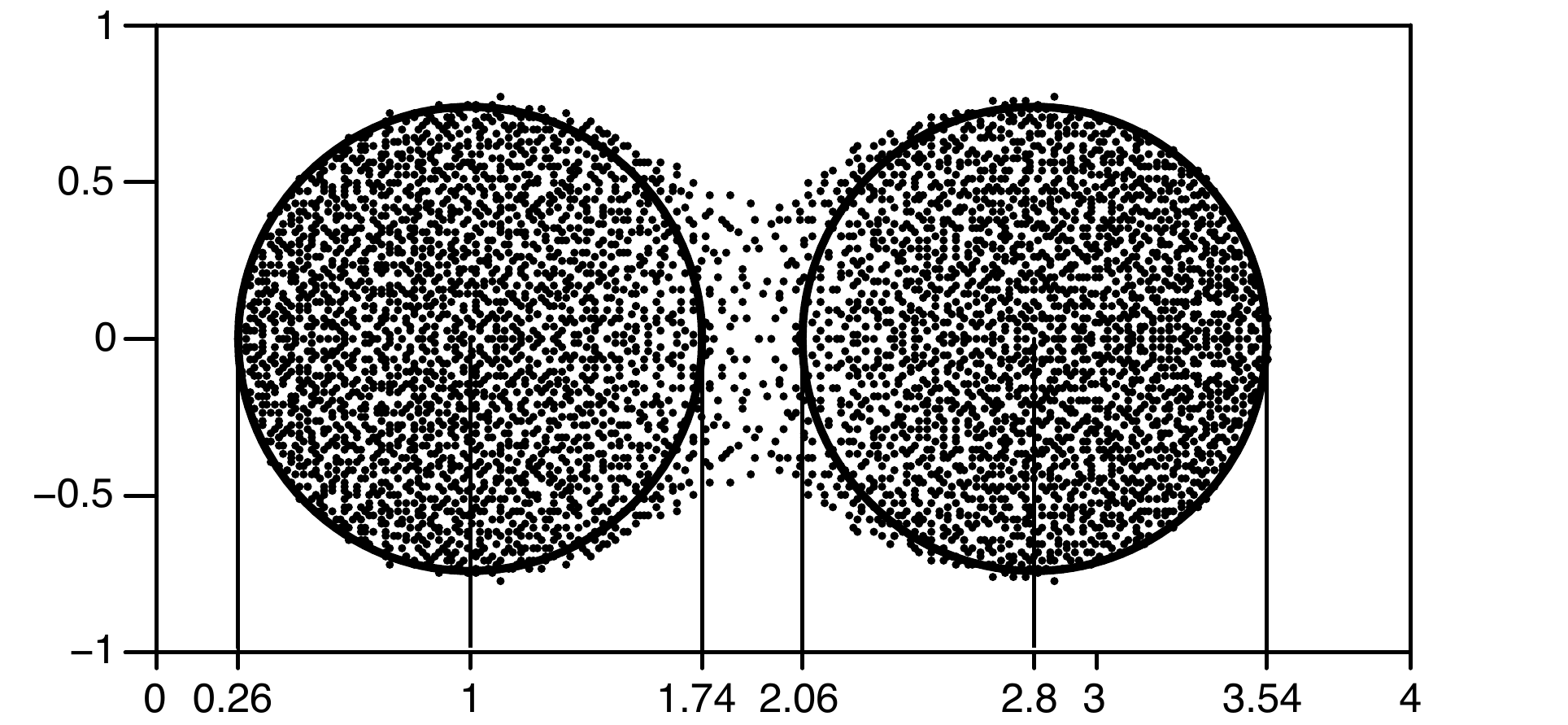}}
\end{tabbing}
\caption{Eigenvalue plots of randomly generated $n$ by $n$ matrices of the form $D_n+M_n$, where $n=5000$.  In left column, each entry of $M_n$ was an iid Bernoulli random variable, taking the values $+1$ and $-1$ each with probability $1/2$, and in the right column, each entry was an iid Gaussian normal random variable, with probability density function is $\frac{1}{\sqrt{2\pi}} \exp( -x^2/2 )$.  In the first row, $D_n$ is the deterministic matrix $\operatorname{diag}(1,1,\ldots,1,2.5,2.5,\ldots,2.5)$, and in the second row $D_n$ is the deterministic matrix $\operatorname{diag}(1,1,\ldots,1,2.8,2.8,\ldots,2.8)$ (in each case, the first $n/2$ diagonal entries are $1$'s, and the remaining entries are $2.5$ or $2.8$ as specified). }
\label{figure:MainThm}
\end{figure}

\begin{remark} One consequence of Corollary \ref{cor:main1} (in the case when \eqref{must} converges to a limit) is that the ESD $ \mu_{\frac{1}{\sqrt{n}} A_n}$ behaves asymptotically deterministically\footnote{The authors thank Oded Schramm for this observation.} in the sense that there exists a deterministic measure $\mu_n$ for each $n$ such that $\mu_{\frac{1}{\sqrt{n}} A_n} - \mu_n$ converges almost surely to zero. Indeed, one can simply take $\mu_n$ to be an instance of $\mu_{\frac{1}{\sqrt{n}} B_n}$, where the $B_n$ are selected independently of the $A_n$, and the claim will hold almost surely.  The question remains as to whether $\mu_n$ itself converges to some limit as $n \to \infty$; we partially address this issue in Theorem \ref{theorem:existence} below.
\end{remark}

\subsection{The Circular Law Conjecture}
Thanks to  Corollary \ref{cor:main1}, we can  reduce the problem of
computing the limiting distribution to the case when the entries are
gaussian\footnote{The idea of establishing a limiting law by first replacing a general random variable with a gaussian one is sometimes referred to as the ``Lindberg trick'' in the literature.} (or having any special distribution satisfying the variance
bound). In particular, since the Circular Law is verified
for random matrices with complex gaussian entries (see \cite{meh}), it follows that
this law (both in probability and in the almost sure sense) holds in full generality.  In other words, we have shown

\begin{theorem}[Circular Law]\label{theorem:CL} 
Let $X_n$ be the $n \times n$  random matrix whose  entries are iid
complex random variables with mean 0 and variance 1. Then the ESD of
$\frac{1}{\sqrt n} X_n$ converges (both in probability and in the almost sure sense) to the uniform distribution on the unit disk.
\end{theorem}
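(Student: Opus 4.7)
The plan is to derive Theorem \ref{theorem:CL} as an essentially immediate application of Corollary \ref{cor:main1}, using the complex Gaussian ensemble as the ``reference'' distribution for which the ESD is already known.

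First I would set $M_n := 0$ for every $n$ and let $\b$ denote the standard complex Gaussian random variable (mean $0$, variance $1$), so that $Y_n$ has iid complex Gaussian entries. Let $\a$ be the random variable from the hypothesis of Theorem \ref{theorem:CL}, and set $X_n := A_n$ as given. With $M_n = 0$, the moment condition \eqref{eqn:conditionM} is trivial, and the additional hypothesis \eqref{must} is also trivially satisfied: indeed $(\frac{1}{\sqrt{n}} M_n - zI)(\frac{1}{\sqrt{n}} M_n - zI)^\ast = |z|^2 I$, so $\mu_{(\frac{1}{\sqrt{n}} M_n - zI)(\frac{1}{\sqrt{n}} M_n - zI)^\ast}$ is the Dirac mass at $|z|^2$ for every $n$ and every $z$, and is in particular convergent for almost every $z$. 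Hence Corollary \ref{cor:main1} applies in both its ``in probability'' and ``almost surely'' forms.

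Next I would invoke Mehta's theorem \cite{meh}, which (via the explicit Ginibre joint density \eqref{eqn:ginibre}) establishes that $\mu_{\frac{1}{\sqrt{n}} Y_n}$ converges, both in probability and almost surely, to the uniform measure $\mu_\infty$ on the unit disk $\{z \in \BBC : |z|\le 1\}$. Combining this with Corollary \ref{cor:main1} then yields that $\mu_{\frac{1}{\sqrt{n}} X_n}$ converges to the same limit $\mu_\infty$ in both senses, which is exactly the statement of Theorem \ref{theorem:CL}.

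There is essentially no obstacle here: all the genuine work has already been done in Theorem \ref{theorem:main1} (and hence in Corollary \ref{cor:main1}) and in Mehta's analysis of the Ginibre ensemble. The only thing to verify carefully is that Mehta's convergence statement is available in both modes (in probability and almost sure); this is standard since the Ginibre formula \eqref{eqn:ginibre} gives strong control on the joint density of the eigenvalues and in particular allows one to compute the limit of $\int f\, d\mu_{\frac{1}{\sqrt{n}} Y_n}$ for every test function $f$, together with a sufficiently strong concentration bound to upgrade convergence in probability to almost sure convergence. Once this is in hand, the proof is complete.
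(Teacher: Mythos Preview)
Your proposal is correct and follows precisely the paper's own argument: set $M_n=0$, take $\b$ complex Gaussian, note that \eqref{eqn:conditionM} and \eqref{must} are trivially satisfied, invoke Mehta's result for the Ginibre ensemble, and apply Corollary~\ref{cor:main1}. The paper presents this derivation in the paragraph immediately preceding the theorem statement, with the same observation that the boundedness and convergence hypotheses are trivial for $M_n=0$.
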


\begin{remark}
In \cite{TV-circular} (see also \cite{gt2} for an alternate proof for the in probability sense), this theorem was proven with the extra assumption that the entries have finite $(2+\eps)$-th moment
for any fixed $\eps >0$; earlier related results are appear in \cite{girko, twenty, bai-first, bai, gotze}.
\end{remark}

Notice that in Theorem \ref{theorem:CL}, we set $M_n$ to be the all
zero matrix (for which the boundedness and convergence hypotheses are trivial). In \cite{KV}, explicit distributions were computed for
the case when $M_n$ is an arbitrary diagonal matrix and $X_n$ has
iid gaussian entries. The formula for the limiting distribution is
somewhat technical, but its support is easy to describe: it is
exactly the set of $z \in \C$ for which $\int |z-x|^{-2} d\mu(x) \ge 1$ where
$\mu$ is the limiting distribution of the ESD of $M_n$. (In the
case $M_n$ is all zero, $\mu$ has all its mass at the origin, and so
the set of $z$ is the unit disk.)

The proof of Theorem \ref{theorem:main1} actually shows that if
$M_n$ and $M_n'$ both obey \eqref{eqn:conditionM} and have the property that the difference between the
ESD \eqref{must} and the counterpart for $M'_n$ converges to zero for almost every $z$, then
Theorem \ref{theorem:main1} holds with $A_n:= M_n + X_n$ and $B_n: =
M_n' + Y_n$ (see Remark \ref{remark:DS2}).

This has the following interesting consequence. Assume that  $M_n$ is a matrix with low rank, say $o(n)$. In this case, it is easy to see that the ESD \eqref{must} concentrates at $|z|^2$, since the matrix involved here is a self-adjoint low rank perturbation of $|z|^2 I$.  Thus, we can replace $M_n$ by the zero matrix and obtain

\begin{corollary} \label{cor:CL1} (Circular Law for shifted matrices)
Let $X_n$ be the $n \times n$  random matrix whose  entries are iid
complex random variables with mean 0 and variance 1 and $M_n$ be a deterministic
matrix with rank $o(n)$ and obeying \eqref{eqn:conditionM}. Let $A_n:= M_n + X_n$. Then the ESD of
$\frac{1}{\sqrt n} A_n$ converges (in either sense) to the uniform distribution on the unit disk.
\end{corollary}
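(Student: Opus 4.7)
The plan is to deduce this directly from the enhanced universality principle (Theorem \ref{theorem:main1} as strengthened in Remark \ref{remark:DS2}) together with the already-established circular law (Theorem \ref{theorem:CL}). I would take $M_n$ as given and compare with $M_n' := 0$. Setting $A_n = M_n + X_n$ and $B_n = 0 + Y_n = Y_n$ (where $Y_n$ has iid entries of mean $0$ and variance $1$, say complex gaussian), if I can verify that the Hermitian ESDs
\[
\mu_{(\frac{1}{\sqrt{n}} M_n - zI)(\frac{1}{\sqrt{n}} M_n - zI)^\ast}
\qquad\text{and}\qquad
\mu_{(zI)(zI)^\ast} = \delta_{|z|^2}
\]
share the same limit for almost every $z \in \BBC$, then the variant of Theorem \ref{theorem:main1} noted in Remark \ref{remark:DS2} gives $\mu_{\frac{1}{\sqrt{n}} A_n} - \mu_{\frac{1}{\sqrt{n}} Y_n} \to 0$ in both probability and the almost sure sense. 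Combined with Theorem \ref{theorem:CL} applied to $Y_n$, this yields convergence of $\mu_{\frac{1}{\sqrt{n}} A_n}$ to the uniform measure on the unit disk.

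The key step is therefore to show that under the rank hypothesis $\operatorname{rank}(M_n) = o(n)$, the ESD of $H_n(z) := (\frac{1}{\sqrt{n}} M_n - zI)(\frac{1}{\sqrt{n}} M_n - zI)^\ast$ converges to $\delta_{|z|^2}$ for every $z \in \BBC$. Expanding,
\[
H_n(z) - |z|^2 I = \frac{1}{n} M_n M_n^\ast - \frac{\bar z}{\sqrt{n}} M_n - \frac{z}{\sqrt{n}} M_n^\ast,
\]
and each of the three terms on the right has rank at most $\operatorname{rank}(M_n) = o(n)$, so the whole perturbation has rank $r_n = o(n)$. By the standard rank-based interlacing for eigenvalues of Hermitian matrices (a consequence of Weyl's inequality, or equivalently Cauchy interlacing after a minimax argument), the Kolmogorov distance between $\mu_{H_n(z)}$ and $\mu_{|z|^2 I} = \delta_{|z|^2}$ is bounded by $r_n/n = o(1)$, giving the desired convergence uniformly in $z$.

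Plugging this into the universality principle then completes the argument: the hypothesis \eqref{eqn:conditionM} is assumed for $M_n$ and is trivial for $M_n' = 0$, and the two ESDs in \eqref{must} both converge to $\delta_{|z|^2}$ for every $z$, so in particular for almost every $z$. Consequently $\mu_{\frac{1}{\sqrt{n}}(M_n + X_n)}$ and $\mu_{\frac{1}{\sqrt{n}} Y_n}$ have the same limit in both modes, which by Theorem \ref{theorem:CL} is the uniform measure on the unit disk. The only non-bookkeeping step is the rank perturbation estimate above, and this is entirely routine, so no serious obstacle is expected — the corollary is essentially a clean application of the machinery already developed.
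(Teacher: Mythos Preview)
Your proposal is correct and follows essentially the same route as the paper: the paper also invokes the strengthened universality principle (Remark \ref{remark:DS2}) with $M_n' = 0$, observes that $(\frac{1}{\sqrt{n}} M_n - zI)(\frac{1}{\sqrt{n}} M_n - zI)^\ast$ is a self-adjoint low-rank perturbation of $|z|^2 I$ so its ESD concentrates at $|z|^2$, and then appeals to the circular law for $Y_n$. Your explicit expansion of $H_n(z) - |z|^2 I$ and the rank/interlacing bound simply make that one-line observation precise.
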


In particular, it shows that Theorem \ref{theorem:CL} still holds if
the entries have (the same) non-zero mean. This extends a result of
Chafa\"i \cite{cha}, which in addition assumed that the entries
had finite fourth moment.


\subsection{Extensions}

We can extend Theorem \ref{theorem:main1}  in several ways. First,
by conditioning, we can obtain a theorem for $M_n$ being a random
matrix.

\begin{theorem}[Universality from a random base matrix]\label{theorem:main2}
Let $\a$ and $ \b$ be complex random variables with zero mean and
unit variance. Let $X_n = (\a_{ij})_{1 \leq i,j \leq n}$ and $Y_n = (\b_{ij})_{1 \leq i,j \leq n}$ be $n \times n$ random matrices
whose entries are iid copies of $\a$ and $\b$, respectively. For
each $n$, let $M_n$ be a random $n \times n$ matrix, independent of $X_n$ or $Y_n$, such that
$\frac{1}{n^2} \|M_n\|_2^2$ is bounded in probability (see Definition \ref{weak-def}).  Let $A_n:= M_n + X_n$ and $B_n:= M_n
+Y_n$. Then $\mu_{\frac{1}{\sqrt{n}} A_n} - \mu_{\frac{1}{\sqrt{n}}
B_n}$ converges in probability to zero.  If we furthermore assume that $\frac{1}{n^2} \|M_n\|_2^2$ is almost surely bounded, and \eqref{must} converges almost surely to some limit for almost every $z$, then $\mu_{\frac{1}{\sqrt{n}} A_n} - \mu_{\frac{1}{\sqrt{n}}
B_n}$ converges almost surely to zero.
\end{theorem}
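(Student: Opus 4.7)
The plan is to reduce both claims to the deterministic-base-matrix setting of Theorem~\ref{theorem:main1} by conditioning on $M_n$ and exploiting its independence from $X_n$ and $Y_n$.

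For the \emph{in probability} statement, fix a test function $f:\BBC\to\R$ and $\eps>0$, and put
$$\phi_n(m) := \P\left(\left|\int f\, d\mu_{(m+X_n)/\sqrt{n}} - \int f\, d\mu_{(m+Y_n)/\sqrt{n}}\right| \geq \eps\right)$$
for $m \in M_n(\BBC)$, so that the target is $\E[\phi_n(M_n)] \to 0$. The key step is to upgrade Theorem~\ref{theorem:main1} to the uniform estimate
$$\sup\bigl\{\phi_n(m):\ \tfrac{1}{n^2}\|m\|_2^2 \leq K\bigr\} \xrightarrow[n\to\infty]{} 0 \qquad (\ast)$$
for every fixed $K>0$. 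If $(\ast)$ failed, one could find $\eps'>0$, $n_k\to\infty$, and matrices $m_k$ of size $n_k\times n_k$ with $\tfrac{1}{n_k^2}\|m_k\|_2^2 \leq K$ and $\phi_{n_k}(m_k)\geq \eps'$; padding to a full sequence by setting $m_n:=0$ for $n\notin\{n_k\}$ produces a deterministic sequence satisfying \eqref{eqn:conditionM}, and Theorem~\ref{theorem:main1} combined with Remark~\ref{remark:subsequence} would force $\phi_{n_k}(m_k)\to 0$, a contradiction. Given $(\ast)$ and the boundedness in probability of $\tfrac{1}{n^2}\|M_n\|_2^2$, I pick $K$ so that $\P(\tfrac{1}{n^2}\|M_n\|_2^2 > K)$ is small uniformly in large $n$ and split
$$\E[\phi_n(M_n)] \leq \P\bigl(\tfrac{1}{n^2}\|M_n\|_2^2 > K\bigr) + \sup_{\frac{1}{n^2}\|m\|_2^2 \leq K}\phi_n(m),$$
making both terms arbitrarily small.

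For the \emph{almost sure} statement, the hypotheses are strengthened to a.s.~boundedness of $\tfrac{1}{n^2}\|M_n\|_2^2$ together with a.s.~convergence of \eqref{must} for a.e.~$z$. By Fubini's theorem, the latter upgrades to the statement that with probability one the realization of $(M_n)$ makes \eqref{must} converge for a.e.~$z\in\BBC$. On the full-measure event where both conditions hold, the realized sequence meets both hypotheses of Theorem~\ref{theorem:main1}. Conditioning on $(M_n)$, which leaves the distribution of $(X_n,Y_n)$ intact by independence, Theorem~\ref{theorem:main1} produces, for every $f$ in a fixed countable family determining vague convergence, the a.s.~convergence $\int f\,d\mu_{A_n/\sqrt{n}} - \int f\,d\mu_{B_n/\sqrt{n}} \to 0$ over the remaining randomness; unconditioning via Fubini delivers the claimed unconditional a.s.~vague convergence.

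The main obstacle lies in the first part: since ``bounded in probability'' is strictly weaker than a.s.~boundedness, one cannot simply condition on a full-measure event and invoke Theorem~\ref{theorem:main1} realization-by-realization. The extraction of the uniform estimate $(\ast)$ from Theorem~\ref{theorem:main1}—equivalent, via Borel--Cantelli, to a subsequence extraction producing base matrices whose norms are a.s.~bounded along that subsequence—is the essential ingredient that upgrades the pointwise convergence in the parameter $m$ supplied by Theorem~\ref{theorem:main1} to a statement that can be integrated against the law of $M_n$.
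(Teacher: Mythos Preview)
Your proposal is correct and follows essentially the same approach as the paper. For the in-probability part, the paper obtains your uniform estimate $(\ast)$ by directly choosing, for each $n$, a near-maximizer $M_n^f \in \{m:\tfrac{1}{n^2}\|m\|_2^2\le C\}$ of $\phi_n$ and applying Theorem~\ref{theorem:main1} to the full sequence $(M_n^f)$, rather than arguing by contradiction along a subsequence as you do; for the almost sure part, the paper simply says ``condition on $M_n$,'' which is precisely your Fubini argument spelled out.
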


We can also address a more general form of random matrices (cf. \cite{twenty}). Let
$K_n, L_n$ be two sequences of matrices. Define $A_n:= M_n+ K_n X_n
L_n$ and $B_n:= M_n + K_n Y_n L_n$. We can show that under some mild
assumptions on $M_n, K_n, L_n$, Theorem \ref{theorem:main1} still
holds:

\begin{theorem} \label{theorem:main3}
Let $\a$ and $ \b$ be complex random variables with zero mean and
unit variance. Let $X_n$ and $Y_n$ be $n \times n$ random matrices
whose entries are iid copies of $\a$ and $\b$, respectively. Let
$M_n, K_n, L_n$ be random $n \times n$ matrices (independent of $X_n, Y_n$) and let $A_n:= M_n +
K_n X_n L_n$ and $B_n:= M_n + K_n Y_n L_n$. Assume that the expressions
\begin{equation} \label{eqn:trace-3}
\frac{1}{n^2} \| A_n \|_2^2 + \frac{1}{n^2} \| B_n\|_2^2 + \frac{1}{n^2} \| K_n^{-1} M_n  L_n^{-1} \|_2^2 +
\frac{1}{n} \|K_n^{-1} L_n^{-1} \|_2^2
\end{equation}
are bounded in probability.  If furthermore we assume that \eqref{eqn:trace-3} is almost surely bounded, and that for almost every $z$ the ESDs
\begin{equation}\label{fancy-must}
\mu_{( \frac{1}{\sqrt{n}} K_n^{-1} M_n L_n^{-1} - z K_n^{-1} L_n^{-1}) ( \frac{1}{\sqrt{n}} K_n^{-1} M_n L_n^{-1} - z K_n^{-1} L_n^{-1})^\ast}
\end{equation}
converge almost surely to a limit, then $\mu_{\frac{1}{\sqrt{n}} A_n} - \mu_{\frac{1}{\sqrt{n}} B_n}$ converges almost surely to zero.
\end{theorem}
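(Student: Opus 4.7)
The plan is to reduce Theorem \ref{theorem:main3} to the per-spectral-point singular-value comparison that is the core of the proof of Theorem \ref{theorem:main1}/\ref{theorem:main2}, by a conjugation/factorization that isolates $K_n$ and $L_n$ from the iid noise. The key algebraic identity is
$$
\tfrac{1}{\sqrt{n}} A_n - zI \;=\; K_n \Bigl( \tilde M_n(z) + \tfrac{1}{\sqrt{n}} X_n \Bigr) L_n, \qquad \tilde M_n(z) := \tfrac{1}{\sqrt{n}} K_n^{-1} M_n L_n^{-1} - z K_n^{-1} L_n^{-1},
$$
with the analogous identity for $B_n$ (with $Y_n$ in place of $X_n$). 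Taking $\tfrac{1}{n}\log|\det(\cdot)|$ on both sides, the terms $\tfrac{1}{n}\log|\det K_n|$ and $\tfrac{1}{n}\log|\det L_n|$ are common to the $A_n$ and $B_n$ expressions and cancel in the difference of log-potentials.

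First I would invoke the Hermitization reduction (exactly as in Theorem \ref{theorem:main1}): it suffices to show, for almost every $z \in \BBC$, the almost sure convergence to zero of
$$
\tfrac{1}{n} \log \bigl| \det\bigl( \tilde M_n(z) + \tfrac{1}{\sqrt{n}} X_n \bigr) \bigr| \;-\; \tfrac{1}{n} \log \bigl| \det\bigl( \tilde M_n(z) + \tfrac{1}{\sqrt{n}} Y_n \bigr) \bigr|,
$$
subject to a uniform-integrability control on the logarithmic singularity near the smallest singular values. After conditioning on the independent triple $(M_n, K_n, L_n)$, this is precisely the per-$z$ comparison that drives the proof of Theorem \ref{theorem:main2}, with random base matrix $\sqrt{n}\,\tilde M_n(z) = K_n^{-1} M_n L_n^{-1} - \sqrt{n}\,z\,K_n^{-1} L_n^{-1}$ and spectral point $0$. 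The required hypotheses transfer cleanly: the moment bound analogous to \eqref{eqn:conditionM} follows from
$$
\tfrac{1}{n^2}\|\sqrt{n}\,\tilde M_n(z)\|_2^2 \;\le\; \tfrac{2}{n^2}\|K_n^{-1} M_n L_n^{-1}\|_2^2 \;+\; 2|z|^2\cdot\tfrac{1}{n}\|K_n^{-1} L_n^{-1}\|_2^2,
$$
which is almost surely bounded by \eqref{eqn:trace-3}; and the singular-value convergence hypothesis \eqref{must} for this base matrix at spectral point $0$ is exactly the almost sure convergence of $\mu_{\tilde M_n(z)\,\tilde M_n(z)^{\ast}}$, which is \eqref{fancy-must}. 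Tightness of $\mu_{\frac{1}{\sqrt{n}} A_n}$ and $\mu_{\frac{1}{\sqrt{n}} B_n}$ is ensured by the almost sure boundedness of $\tfrac{1}{n^2}\|A_n\|_2^2$ and $\tfrac{1}{n^2}\|B_n\|_2^2$ in \eqref{eqn:trace-3} combined with Lemma \ref{tight}.

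The main obstacle, as in Theorem \ref{theorem:main1}, will be controlling the least singular value of the perturbed random matrix $\tilde M_n(z) + \tfrac{1}{\sqrt{n}} X_n$ (and its $Y_n$-counterpart) strongly enough to rule out contributions from near-zero singular values and to justify passage to the limit in the log-potential. I would import the least-singular-value estimates used in the proof of Theorem \ref{theorem:main1}, applied conditionally on $(M_n, K_n, L_n)$; the fact that the ``deterministic shift'' $\tilde M_n(z)$ is itself random and not a scalar multiple of the identity is harmless, since those estimates use only the iid structure of the additive noise $X_n, Y_n$. A minor technical point is that $K_n$ and $L_n$ must be almost surely invertible for the factorization to make sense, which is implicit in the almost sure finiteness of $\tfrac{1}{n^2}\|K_n^{-1} M_n L_n^{-1}\|_2^2$ and $\tfrac{1}{n}\|K_n^{-1} L_n^{-1}\|_2^2$ in \eqref{eqn:trace-3}. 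With the per-$z$ comparison in hand, a standard dominated-convergence/test-function argument (again as in the proof of Theorem \ref{theorem:main1}) yields $\mu_{\frac{1}{\sqrt{n}} A_n} - \mu_{\frac{1}{\sqrt{n}} B_n} \to 0$ almost surely.
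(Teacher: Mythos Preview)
Your proposal is correct and follows essentially the same route as the paper: factor $\frac{1}{\sqrt{n}}A_n - zI = K_n(\tilde M_n(z) + \frac{1}{\sqrt{n}}X_n)L_n$, cancel the common $\frac{1}{n}\log|\det K_n| + \frac{1}{n}\log|\det L_n|$ terms, verify that the boundedness of \eqref{eqn:trace-3} gives the Hilbert--Schmidt control on the new base matrix $\sqrt{n}\,\tilde M_n(z)$, and then rerun Proposition~\ref{lemma:determinant} (conditioning on $(M_n,K_n,L_n)$) with \eqref{fancy-must} playing the role of \eqref{must}. The paper's proof in Section~\ref{section:extensions} is exactly this argument, stated more tersely.
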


Note that Theorem \ref{theorem:main2} is the special case of Theorem \ref{theorem:main3} in which $K_n=L_n=I$. It seems of interest to see whether the hypotheses on \eqref{eqn:trace-3} can be verified for various natural random or deterministic matrices $M_n, K_n, L_n$, normalised appropriately by a suitable power of $n$.  We do not pursue this matter here.

A demonstration of the above theorem
for the Bernoulli and the Gaussian case appears in Figure~\ref{figure:Extension}.
\begin{figure}
\centerline{\textbf{Bernoulli \hspace{1.8in} Gaussian }}
\begin{center}
\scalebox{.32}{\includegraphics{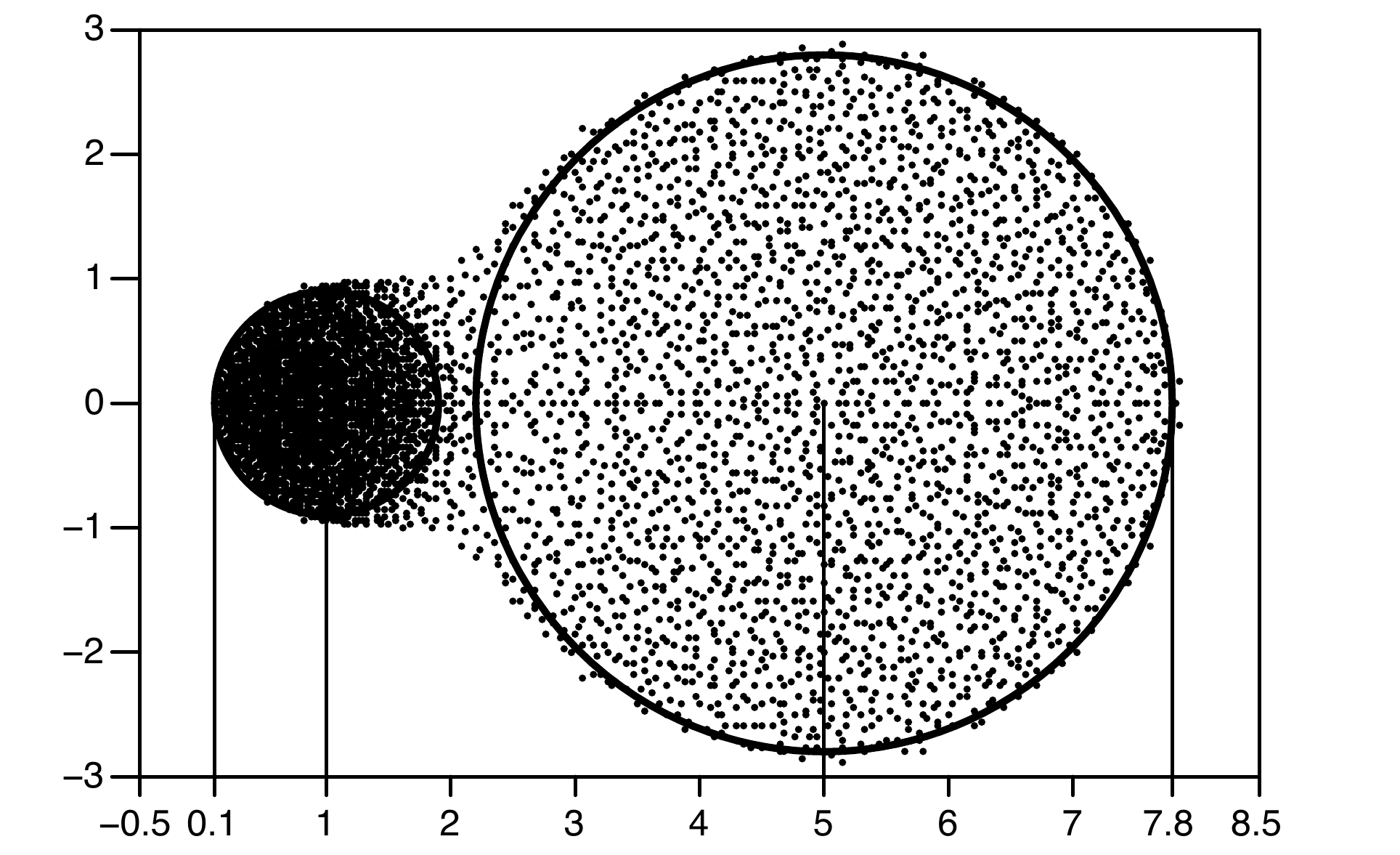}}
\scalebox{.32}{\includegraphics{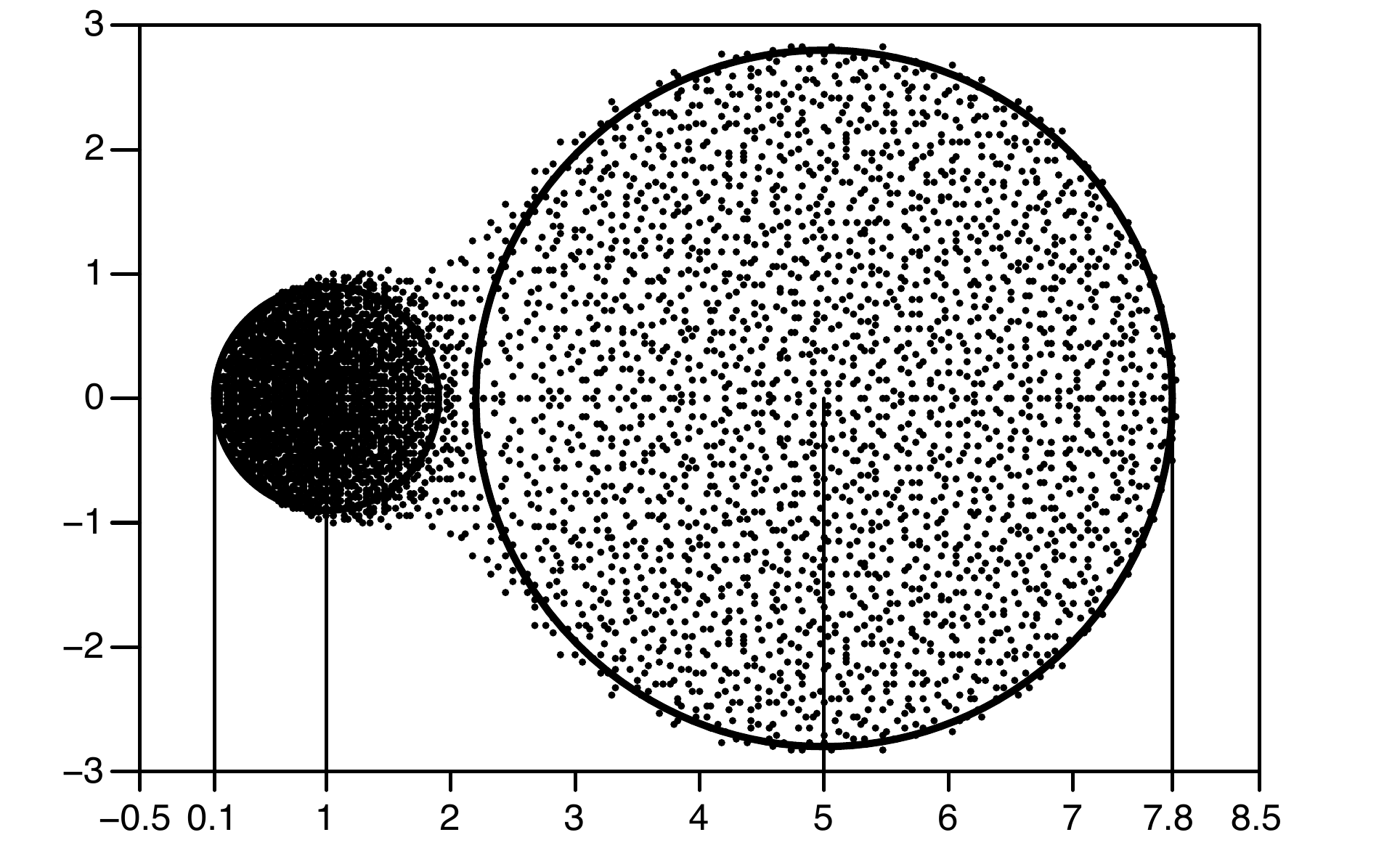}}
\end{center}
\caption{Eigenvalue plots of two randomly generated 5000 by 5000 matrices of the form $A + BM_nB$, where $A$ and$ B$ are diagonal matrices having $n/2$ entries with the value 1 followed by $n/2$ entries with the value 5 (for $D$) and the value $2$ (for $X$).
On the left, each entry of $M_n$ was an iid Bernoulli random variable, taking the values $+1$ and $-1$ each with probability $1/2$.  On the right, each entry of $M_n$ was an iid Gaussian normal random variable, with probability density function is $\frac{1}{\sqrt{2*\pi}} \exp( -x^2/2 ) $.}
\label{figure:Extension}
\end{figure}





The proofs of these extensions are discussed in Section
\ref{section:extensions}.

Another direction for generalization is to consider random matrices
whose entries are independent, but not necessarily identically
distributed.
 Most of the tools used in this paper (e.g. law of large numbers, Talagrand's inequality,
  and the least singular value bound from \cite{TV-circular}) extend without difficulty to this
  setting. Furthermore, Krishnapur pointed out  that one can also prove a ``universal'' version of
  Theorem \ref{theorem:DS}. This leads to a generalization in Appendix \ref{krish-sec} (written
  by Krishnapur).

For similar reasons, one expects to be able to extend the above results to the case
when $X_n$ and $Y_n$ are sparse iid random matrices; for instance,
the least singular value bounds from \cite{TV-circular} extend to this case,
 and the circular law for sparse iid matrices is already known in several cases \cite{gotze},
 \cite{TV-circular}. We, however,
will not pursue these matters here.

\subsection{Computing the ESD of a random non-hermitian
 matrix via the ESD of a hermitian one}

Theorem \ref{theorem:main1} provides one useful way to compute the
 (limiting distribution of) ESD of a random non-hermitian matrix, namely that one can restrict to
 any particular distribution (such as complex  gaussian) of the entries. The
 proof of this theorem (with some modification) also provides another way
 to deal with this problem, namely that one can reduce the problem
 of computing the ESD of $\frac{1}{\sqrt n}A_n$ to that of
 $(\frac{1}{\sqrt n} A_n -zI) (\frac{1}{\sqrt n} A_n -zI)^\ast$, for
 fixed $z\in \BBC$.  More precisely, we have the following equivalences.

\begin{theorem}[Equivalences for convergence]\label{theorem:main4} Let $A_n$ be as in Theorem
\ref{theorem:main1}, and let $\mu$ be a probability measure on $\BBC$ with the second moment condition $\int |z|^2\ d\mu(z) < \infty$.  Then the following are equivalent:
\begin{itemize}
\item[(i)] The ESD $\mu_{\frac{1}{\sqrt{n}} A_n}$ of $\frac{1}{\sqrt{n}} A_n$ converges in probability  to $\mu$.
\item[(ii)] For almost every complex number $z$, $\frac{1}{n} \log |\det( \frac{1}{\sqrt{n}} A_n - z I )|$ converges in probability to $\int_{\BBC} \log|w-z|\ d\mu(w)$.
\item[(iii)]  For almost every complex number $z$, there exists a sequence $\eps_n > 0$ of positive numbers converging to zero such that $\frac{1}{n} \log \det( ((\frac{1}{\sqrt{n}} A_n - z I) + \eps_n I) (\frac{1}{\sqrt{n}} A_n - z I)^\ast + \eps_n I )$ converges in probability to $2\int_{\C} \log|w-z|\ d\mu(w)$.
\end{itemize}
If furthermore the ESDs \eqref{must} converge to a limit for almost every $z$, then we can replace convergence in probability by almost sure convegence in the above equivalences.
\end{theorem}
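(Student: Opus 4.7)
The plan is to exploit Girko's Hermitization trick. Writing $B_n(z) := \frac{1}{\sqrt{n}} A_n - zI$ and denoting by $\nu_n^{(z)}$ the ESD of $B_n(z) B_n(z)^*$, one has the double identity
$$\frac{1}{n} \log |\det B_n(z)| = \int_{\BBC} \log|w-z|\, d\mu_{\frac{1}{\sqrt{n}}A_n}(w) = \frac{1}{2} \int_0^\infty \log x\, d\nu_n^{(z)}(x),$$
so that (ii) is exactly the statement that the logarithmic potential $U_{\mu_n}(z) := \int \log|w-z|\, d\mu_{\frac{1}{\sqrt{n}}A_n}(w)$ converges to $U_\mu(z) := \int \log|w-z|\, d\mu(w)$ for a.e. $z$, while (iii) is a regularized version of the same statement at the level of singular values.

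I would first establish (ii) $\Leftrightarrow$ (iii) by comparing the regularized and unregularized Hermitized log-determinants, whose difference is essentially $\int \log(1 + \eps_n/x)\, d\nu_n^{(z)}(x)$. This splits into a bulk contribution (where $x$ is bounded away from zero and the integrand is $O(\eps_n) = o(1)$) and a small-$x$ contribution requiring a quantitative lower bound on the smallest singular value $\sigma_n(B_n(z))$ of $B_n(z)$. Since $A_n = M_n + X_n$ with iid noise matrix $X_n$, the least singular value bound from \cite{TV-circular} applies and yields $\sigma_n(B_n(z)) \geq n^{-O(1)}$ with overwhelming probability for a.e. $z$. Choosing $\eps_n$ to tend to zero slowly enough (polynomially in $1/n$) relative to this bound then makes the difference negligible in probability, and almost surely in the strengthened setting.

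Next, I would address (i) $\Leftrightarrow$ (ii). The reverse implication is standard potential theory: since $\Delta U_\mu = 2\pi \mu$ as distributions, $\mu$ is uniquely determined by its logarithmic potential, so convergence of $U_{\mu_n}$ at a.e. $z$ combined with the uniform second-moment tightness of Lemma \ref{tight} (which allows extraction of vague subsequential limits by Prokhorov) pins the limit down to $\mu$. The forward implication is more delicate, as $\log|w-z|$ is unbounded near both $w = z$ and $w = \infty$: the tail is handled by the second moment bound of Lemma \ref{tight}, while the local singularity at $w = z$ is controlled by the same least singular value estimate, since a near-singular $B_n(z)$ is the only way for the potential to pick up a large spurious contribution.

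The main obstacle in both reductions is the uniform control on the smallest singular values of $B_n(z)$ for a.e. $z$, strong enough to survive the eventual logarithmic integration. This is precisely the Tao--Vu least singular value bound for shifted iid random matrices, which applies here because of the structure hypothesized on $A_n$ in Theorem \ref{theorem:main1}; once it is in place, the remainder of the proof is a combination of Girko's Hermitization identity and standard tightness and potential-theoretic arguments. The upgrade from ``in probability'' to ``almost surely'' under the additional hypothesis on \eqref{must} is straightforward, because that hypothesis delivers almost sure convergence of $\nu_n^{(z)}$, and the remaining pieces (Lemma \ref{tight}, least singular value bounds on $B_n(z)$) already admit almost sure versions.
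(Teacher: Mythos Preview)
Your overall plan is sound and broadly matches the paper's, but there are two substantive points to flag.

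\textbf{A different route for (ii) $\Rightarrow$ (i).} You propose the standard potential-theoretic argument (Prokhorov plus uniqueness of the log potential). The paper instead takes an amusing shortcut: it manufactures a diagonal matrix $B'_n$ whose entries are iid samples from $\mu$, sets $B_n = \sqrt{n} B'_n$, and then applies the Replacement Principle (Theorem~\ref{theorem:replacement}) directly to the pair $A_n, B_n$. The law of large numbers handles $\frac{1}{n}\log|\det(B'_n - zI)|$, and the second-moment hypothesis on $\mu$ gives the required tightness. Both approaches work; the paper's avoids reproving that convergence of log potentials implies vague convergence, at the cost of invoking the replacement machinery once more.

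\textbf{A genuine gap in your control of small eigenvalues/singular values.} You assert that the least singular value bound $\sigma_n(B_n(z)) \geq n^{-O(1)}$ is the only hard input needed. This is not quite enough in two places. First, in (iii) $\Rightarrow$ (ii) the sequence $\eps_n$ is \emph{given}, not chosen, so you must show $\frac{1}{n}\sum_i \log(1+\eps_n/\sigma_i^2) \to 0$ for an arbitrary $\eps_n \to 0$; a polynomial lower bound on $\sigma_n$ alone does not prevent, say, a constant fraction of the $\sigma_i$ from being of size $n^{-C}$. The paper supplements the least singular value bound with an intermediate-scale estimate
\[
\sigma_{n-i}(B_n(z)) \gtrsim \frac{i}{n} \qquad \text{for } n^{0.99} \leq i \leq n,
\]
obtained from the distance-to-subspace lower tail (Proposition~\ref{xw}) via the negative-second-moment identity (Lemma~\ref{lemma:twosum}). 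Second, in (i) $\Rightarrow$ (ii) you say the singularity of $\log|w-z|$ at $w=z$ is controlled by the least singular value estimate. But the quantity to control is $\frac{1}{n}\sum_{|\lambda_i - z| < \delta} \log\frac{1}{|\lambda_i - z|}$, which involves \emph{eigenvalues}, not singular values; knowing $\sigma_n \geq n^{-O(1)}$ only bounds the product $\prod|\lambda_i - z|$. The paper bridges this via Weyl's product inequality (Lemma~\ref{product}), which gives $\prod_{i \geq k} |\lambda_i| \geq \prod_{i \geq k} \sigma_i$ and thereby transfers the intermediate singular-value bound above into a bound on partial sums of $\log\frac{1}{|\lambda_i|}$. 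Both of these missing pieces are short once identified, but your sketch as written does not close without them.
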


We prove this result in Section \ref{thm4}.  As a corollary, we have a criterion for when $\frac{1}{\sqrt{n}} A_n$ converges to a distribution $\mu$:

\begin{corollary} \label{corollary:main4} Let $A_n$ be as in Theorem
\ref{theorem:main1}, and let $\mu$ be a probability measure on $\BBC$ with the second moment condition $\int |z|^2\ d\mu(z) < \infty$. Suppose that for almost every complex number $z$, the ESD of $(\frac{1}{\sqrt{n}} A_n - zI)(\frac{1}{\sqrt{n}} A_n - zI)^{\ast}$ converges in probability to a limiting distribution $\eta_z$ on $[0,+\infty)$ such that the integral $\int_{\BBC} \log t\ d\eta_z(t)$ is absolutely convergent and equal to $2\int_{\BBC} \log|w-z|\ d\mu(w)$.  Then the ESD of $\frac{1}{\sqrt{n}} A_n$ converges in probability to $\mu$.  If the ESDs \eqref{must} converge to a limit for almost every $z$, then we can replace convergence in probability by almost sure convergence in the above implication.
\end{corollary}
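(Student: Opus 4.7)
The natural target is condition (iii) of Theorem \ref{theorem:main4}. Writing $H_n(z) := (\tfrac{1}{\sqrt n}A_n - zI)(\tfrac{1}{\sqrt n}A_n - zI)^\ast$ and letting $\nu_{n,z}$ denote its ESD on $[0,\infty)$, we have
$$\tfrac{1}{n}\log\det(H_n(z) + \eps_n I) = \int_0^\infty \log(t+\eps_n)\, d\nu_{n,z}(t),$$
so it suffices, for almost every $z$, to produce a sequence $\eps_n \downarrow 0$ along which this integral converges in probability to $\int \log t\, d\eta_z(t) = 2\int_\BBC \log|w-z|\, d\mu(w)$. The implication (iii)$\Rightarrow$(i) of Theorem \ref{theorem:main4} then delivers $\mu_{\frac{1}{\sqrt n}A_n}\to\mu$ in probability.

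\textbf{Convergence at fixed $\eps$.} Fix $\eps > 0$. By Lemma \ref{tight} applied to $A_n-\sqrt n zI$, the quantity $\int t\, d\nu_{n,z}(t) = \frac{1}{n^2}\|A_n-\sqrt n zI\|_2^2$ is bounded in probability, so $\{\nu_{n,z}\}$ is tight in probability and the hypothesised vague convergence $\nu_{n,z}\to\eta_z$ upgrades to convergence in probability against any bounded continuous test function. Split $\log(t+\eps) = \phi_R(t)\log(t+\eps) + (1-\phi_R(t))\log(t+\eps)$ for a smooth cutoff $\phi_R$ equal to $1$ on $[0,R]$ and $0$ past $R+1$; the first summand is bounded, continuous, and compactly supported, so its integrals converge in probability. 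For the tail, $\log(t+\eps)\le \log 2 + t(\log R)/R$ on $[R,\infty)$ (for $R\ge e$ and $\eps\le R$, using that $s\mapsto \log s/s$ is decreasing on $[e,\infty)$), whence
$$\int_R^\infty \log(t+\eps)\, d\nu_{n,z}(t) \le \log 2\cdot \nu_{n,z}([R,\infty)) + \frac{\log R}{R}\int t\, d\nu_{n,z}(t),$$
which can be made arbitrarily small by choosing $R$ large, with probability tending to $1$; similarly for $\eta_z$, whose logarithmic integral is finite by hypothesis. Sending $R\to\infty$ yields $\int\log(t+\eps)\, d\nu_{n,z}\to \int\log(t+\eps)\, d\eta_z$ in probability.

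\textbf{Sending $\eps\to 0$ and diagonal selection.} Writing $\log(t+\eps)-\log t = \log(1+\eps/t)$ and splitting at $t=\sqrt\eps$,
$$0 \le \int \log(1+\eps/t)\, d\eta_z(t) \le \sqrt\eps + \int_{\{t<\sqrt\eps\}}\bigl(C + |\log t|\bigr)\, d\eta_z(t),$$
and the second term vanishes with $\eps$ by absolute convergence of $\int|\log t|\, d\eta_z$. Hence $\int\log(t+\eps)\, d\eta_z\to 2\int\log|w-z|\, d\mu$ as $\eps\to 0$. A standard diagonal argument now produces the required deterministic $\eps_n\downarrow 0$: for each integer $k\ge 1$, choose $N_k$ (increasing) so that $|\int \log(t+\tfrac1k)\, d\eta_z - 2\int\log|w-z|\, d\mu| \le \tfrac1k$ and, for all $n\ge N_k$,
$$\P\!\left(\left|\int\log(t+\tfrac1k)\, d\nu_{n,z} - \int\log(t+\tfrac1k)\, d\eta_z\right| > \tfrac1k\right) \le \tfrac1k;$$
setting $\eps_n = 1/k$ for $N_k\le n < N_{k+1}$ verifies condition (iii) of Theorem \ref{theorem:main4} at $z$.

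\textbf{Almost sure version and main obstacle.} When \eqref{must} converges for almost every $z$, Lemma \ref{tight} gives almost sure tightness and the hypothesis upgrades to a.s.\ convergence $\nu_{n,z}\to\eta_z$; the argument above then carries through with ``almost surely'' replacing ``in probability'', except that the diagonal step now invokes Borel--Cantelli: tighten the probability thresholds to $\le 2^{-k}$ and keep $N_{k+1}-N_k$ sub-exponential in $k$, so the bad events are summable. The only genuinely substantive step is pushing vague convergence through the unbounded function $\log(t+\eps)$: the regularising $\eps$ disposes of the singularity at $0$, while the growth at infinity is tamed by the uniform first-moment bound from Lemma \ref{tight}. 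Everything else is bookkeeping.
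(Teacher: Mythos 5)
Your proof is correct and takes essentially the same route as the paper's: target condition (iii) of Theorem \ref{theorem:main4}, use Lemma \ref{tight} to obtain the uniform first-moment bound that makes $\log(t+\eps)$ integrable against the ESDs $\nu_{n,z}$, pass the vague convergence $\nu_{n,z}\to\eta_z$ through at each fixed $\eps$, and then diagonalize to a deterministic $\eps_n\downarrow 0$. You have merely spelled out the details (the smooth cutoff and tail estimate for the fixed-$\eps$ step, the explicit $N_k$ selection) that the paper compresses into a one-line invocation of dominated convergence and the phrase ``if $\eps_n$ is chosen to decay to zero sufficiently slowly.''
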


\begin{proof}  We verify the claim for almost sure convergence only; the proof for convergence in probability is similar and is left as an exercise to the reader.

By Lemma \ref{tight}, we see that for fixed $z$, $|\frac{1}{n} \trace(\frac{1}{\sqrt{n}} A_n - zI)(\frac{1}{\sqrt{n}} A_n - zI)^{\ast}|$ is also almost surely bounded.  Taking limits, we conclude that
$$ \int_{\BBC} t\ d\eta_z(t) < \infty.$$
We then see from the dominated convergence theorem that for any $\eps > 0$, $\frac{1}{n} \log \det( ((\frac{1}{\sqrt{n}} A_n - z I)+\eps I) (\frac{1}{\sqrt{n}} A_n - z I)^\ast + \eps I )$ converges almost surely to $\int_{\BBC} \log( t+\eps )\ d\eta_z(t)$.  From this we obtain hypothesis (iii) of Theorem \ref{theorem:main4} (if $\eps_n$ is chosen to decay to zero sufficiently slowly), and the claim follows.
\end{proof}

Since the eigenvalues of $(\frac{1}{\sqrt{n}} A_n -
zI)(\frac{1}{\sqrt{n}} A_n - zI)^{\ast}$ are the squares of the
singular values of $\frac{1}{\sqrt n} A_n -zI$, we can also say that
Theorem \ref{theorem:main4} reduces the problem of computing the
limiting distribution of the eigenvalues of $\frac{1}{\sqrt n} A_n$
to that of the singular values of $\frac {1}{\sqrt n} A_n- zI$.

The big gain here is that the matrix $(\frac{1}{\sqrt{n}} A_n -
zI)(\frac{1}{\sqrt{n}} A_n - zI)^{\ast}$ is hermitian. (Random
matrices of this type are often called \emph{sample covariance matrices} in
the literature.) This allows one to use standard tools such as
truncation, Wigner's moment method and Stieljes transform (see, for
instance, the proof of Theorem \ref{theorem:SCL}
in \cite[Chapter 2]{bai}), or results such as Theorem \ref{theorem:DS}; techniques from free probability are also very powerful for such problems. These methods cannot be applied  to non-hermitian matrices for various reasons (see \cite[Chapter
10]{bai} for a discussion) and their failure has been the main difficulty in attacking problems
such as the Circular Law conjecture.

One can use Corollary \ref{corollary:main4} to give another proof of
Theorem \ref{theorem:CL}, without relying on  explicit formulas such
as \eqref{eqn:ginibre}. We omit the details.

\subsection {Existence of the limit} The results in the previous
chapters provide two different ways to compute (explicitly) the
limiting measure of the ESD of random matrices. In fact there is a simple compactness argument that  guarantees the existence of the limit, assuming of course that the deterministic ESDs \eqref{must} already converge, although the argument does not provide too much information on what the limit actually is.  More precisely, we have

\begin{theorem} \label{theorem:existence}
Let $\a$  be a complex random variable with zero mean and unit
variance. Let $X_n$ be the $n \times n$ random matrix whose entries
are iid copies of $\a$. For each $n$, let $M_n$ be a deterministic
$n \times n$ matrix satisfying
\begin{equation}\label{eqn:conditionM-2-deterministic}
\sup_n \frac{1}{n^2} \|M_n\|_2^2 < \infty.
\end{equation}
Assume furthermore that the ESD \eqref{must} converges for almost every $z \in \C$. Then the ESD of
$\frac{1}{\sqrt n} A_n$, where $A_n:= M_n + X_n$, converges (in both senses) to a limiting measure $\mu$.
\end{theorem}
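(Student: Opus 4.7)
The plan is to reduce to the case of complex Gaussian entries and then combine Corollary \ref{corollary:main4} with a compactness-plus-uniqueness argument based on the logarithmic potential. By Theorem \ref{theorem:main1} applied with a second matrix $Y_n$ of iid standard complex Gaussians, and using that the hypothesis that \eqref{must} converges for a.e.\ $z$ gives the \emph{almost sure} conclusion of that theorem, one has that $\mu_{A_n/\sqrt n} - \mu_{(M_n+Y_n)/\sqrt n}$ tends to zero both in probability and almost surely. It therefore suffices to prove the theorem in the special case where $\a$ is standard complex Gaussian; write $B_n := M_n + Y_n$ in that case.

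For Gaussian $B_n$, the first step is to show that for almost every $z \in \BBC$, the Hermitian ESDs
$$ \eta_{z,n} := \mu_{(B_n/\sqrt n - zI)(B_n/\sqrt n - zI)^{\ast}} $$
converge almost surely to a limiting probability measure $\eta_z$ on $[0,\infty)$. Since the deterministic perturbation $M_n/\sqrt n - zI$ has an a.e.\ convergent singular value distribution by hypothesis, and the random part $Y_n/\sqrt n$ is iid Gaussian, this is a direct application of Dozier--Silverstein type convergence for sample-covariance matrices with a deterministic shift (the almost sure form of Theorem \ref{theorem:DS} alluded to later in the paper).

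The technically hardest step, which I expect to be the main obstacle, is to verify the log-integrability hypothesis of Corollary \ref{corollary:main4}: namely that $\int \log t\, d\eta_z(t)$ is absolutely convergent and that the $\log t$-integrals pass to the limit from $\eta_{z,n}$ to $\eta_z$. The upper tail is handled by Lemma \ref{tight}, which gives almost sure boundedness of $\int t\, d\eta_{z,n}(t)$. The lower tail, i.e.\ uniform-in-$n$ control of $\int_0^{\eps} |\log t|\, d\eta_{z,n}(t)$, is equivalent to a uniform quantitative lower bound on the smallest singular value of $B_n/\sqrt n - zI$. For Gaussian $Y_n$ this bound is accessible: one can use the explicit density/tail estimates for the smallest singular value of a complex Gaussian matrix with a deterministic shift (in the spirit of Edelman and Sankar--Spielman--Teng), which suffice to make $\log t$ uniformly integrable against $\eta_{z,n}$ for almost every $z$.

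Finally, I would conclude by a compactness and uniqueness argument. By Lemma \ref{tight} the second moments $\int |z|^2\, d\mu_{B_n/\sqrt n}(z)$ are almost surely bounded, so along any realization in a probability-one event the sequence $(\mu_{B_n/\sqrt n})$ is tight in the vague topology and admits subsequential limits. For any such limit $\mu$, applying the implication (iii)$\Rightarrow$(i) of Theorem \ref{theorem:main4} together with Corollary \ref{corollary:main4} forces
$$ \int_{\BBC} \log|w-z|\, d\mu(w) = \frac{1}{2} \int_0^{\infty} \log t\, d\eta_z(t) \quad\text{for a.e. } z \in \BBC. $$
Since a compactly-supported-second-moment probability measure on $\BBC$ is determined by its logarithmic potential (via the distributional Laplacian), all subsequential limits coincide with a single $\mu$, and hence $\mu_{B_n/\sqrt n}$ converges almost surely to $\mu$. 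Combined with the first paragraph, the same conclusion holds for $\mu_{A_n/\sqrt n}$, in both modes of convergence.
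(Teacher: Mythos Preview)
Your overall strategy---compactness plus identification of subsequential limits via the logarithmic potential---is sound and is in fact structurally parallel to the paper's own argument. But two points deserve comment, one a simplification and one a genuine gap.

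First, the reduction to complex Gaussian entries is unnecessary. The Dozier--Silverstein theorem (Theorem \ref{theorem:DS}) applies to any iid entries with finite second moment, so Step 2 already gives $\eta_{z,n}\to\eta_z$ almost surely for the original matrices $A_n$; invoking universality first only adds a layer. The paper works directly with $A_n$ throughout.

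Second, and more substantively, your Step 3 contains an error: uniform integrability of $\log t$ against $\eta_{z,n}$ is \emph{not} equivalent to a lower bound on the smallest singular value. A bound $\sigma_n\geq n^{-C}$ controls only a single term of mass $1/n$; one also needs to rule out a macroscopic fraction of singular values accumulating near zero. What is actually required is a bound of the form $\sigma_{n-i}\gtrsim i/n$ for all $i$ (see \eqref{eqn:lowersing} and Lemma \ref{slib}), which follows from the distance-to-subspace estimates of Proposition \ref{xw} and Lemma \ref{lemma:twosum}---not from Edelman or Sankar--Spielman--Teng type results on $\sigma_n$ alone. This is fixable using the paper's own tools, but as written your Step 3 does not go through.

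The paper sidesteps this entire issue. Rather than proving that $\int\log t\,d\eta_{z,n}$ converges, it works with the \emph{regularized} quantity $\int\log(t+\eps_n)\,d\eta_{z,n}$ appearing in condition (iii) of Theorem \ref{theorem:main4}. For each fixed $\eps>0$, $\log(t+\eps)$ is bounded below and has at most linear growth, so convergence of $\int\log(t+\eps)\,d\eta_{z,n}$ follows immediately from Dozier--Silverstein plus the first-moment bound of Lemma \ref{tight}, with no lower-tail analysis at all. The paper then extracts a subsequence along which $\mu_{A_n/\sqrt n}$ converges in probability to some $\mu$, uses the implication (i)$\Rightarrow$(iii) of Theorem \ref{theorem:main4} to identify the full-sequence limit of the regularized log-determinant as $2\int\log|w-z|\,d\mu(w)$, and finally applies (iii)$\Rightarrow$(i) to upgrade to convergence along the whole sequence. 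The delicate singular-value estimates you are reaching for in Step 3 are thus hidden inside the already-proven equivalences of Theorem \ref{theorem:main4}, rather than re-derived here.
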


\begin{proof}
We let $f_1, f_2, f_3, \ldots$ be an enumeration of a sequence of test functions which is dense in the uniform topology (such a sequence exists thanks to the Stone-Weierstrass theorem and the compact support of test functions).  By applying the Bolzano-Weierstrass theorem once for each function in this sequence and then using the Arzel\'a-Ascoli diagonalization argument, we can refine the subsequence so that $ \int_\BBC f_j(z)\ d\mu_{\frac{1}{\sqrt{n}} A_n}(z)$ converges in probability to some limit for each $j$, and hence by a limiting argument $ \int_\BBC g(z)\ d\mu_{\frac{1}{\sqrt{n}} A_n}(z)$ converges in probability to a limit for each test function $g$.  By the Riesz representation function we conclude that along this subsequence, $\mu_{\frac{1}{\sqrt{n}} A_n}$ converges in probability to some limit $\mu$, which is also a probability measure by the tightness bounds in Lemma \ref{tight}.

Applying Theorem \ref{theorem:main4}, we conclude that for almost every $z$, the expression
\begin{equation}\label{weaklim}
\frac{1}{n} \log \det( ((\frac{1}{\sqrt{n}} A_n - z I)+\eps_n I) ((\frac{1}{\sqrt{n}} A_n - z I)^\ast + \eps_n I ))
\end{equation}
converges in probability to $2\int_{\BBC} \log|w-z|\ d\mu(w)$ along this sequence, for some $\eps_n$ converging to zero.  On the other hand, from the hypotheses and the theorem of Dozier and Silverstein (see Theorem \ref{theorem:DS}) we know that for almost every $z$, the expression \eqref{weaklim} has a almost sure limit for the entire sequence of $n$.  Combining the two facts we see  that for almost every $z$, \eqref{weaklim} in fact converges almost surely to $2\int_{\BBC} \log|w-z|\ d\mu(w)$ for all $n$.  The claim now follows from another application of Theorem \ref{theorem:main4}.
\end{proof}

\subsection{Notation} The asymptotic notation is used under the
assumption that $n \rightarrow \infty$, holding all other parameters fixed.  Thus for instance, if we say that a quantity $a_{z,n}$ depending on $n$ and another parameter $z$ is equal to $o(1)$, this means that $a_{z,n}$ converges to zero as $n \to \infty$ for fixed $z$, but this convergence need not be uniform in $z$.  As another example, the condition \eqref{eqn:conditionM} is equivalent to asserting that $\|M_n\| = O(n)$ as $n \to \infty$.

\section{The replacement principle}

The first step toward Theorem \ref{theorem:main1} is the following
result that gives a general criterion for two random matrix
ensembles $\frac{1}{\sqrt{n}} A_n, \frac{1}{\sqrt{n}} B_n$ to
converge to the same limit.

\begin{theorem}[Replacement principle]\label{theorem:replacement} Suppose for each $n$
that $A_n, B_n \in M_n(\BBC)$ are ensembles of random matrices.
  Assume that
\begin{itemize}
\item[(i)] The expression
\begin{equation}\label{pan}
\frac{1}{n^2} \|A_n\|_2^2 + \frac{1}{n^2} \|B_n\|_2^2
\end{equation}
is bounded in probability (resp. almost surely).
\item[(ii)] For almost all complex numbers
$z$, $$\frac{1}{n}
 \log |\det(\frac{1}{\sqrt{n}} A_n - zI)| - \frac{1}{n} \log |\det(\frac{1}{\sqrt{n}} B_n -
 zI)|$$
converges in probability (resp. almost surely) to zero.  In particular, for each fixed $z$, these determinants are non-zero with probability $1-o(1)$ for all $n$ (resp. almost surely non-zero for all but finitely many $n$).
\end{itemize}
Then $\mu_{\frac{1}{\sqrt{n}} A_n} - \mu_{\frac{1}{\sqrt{n}} B_n}$ converges in probability (resp. almost surely) to zero.
\end{theorem}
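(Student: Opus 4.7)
The plan is to translate the conclusion into a statement about logarithmic potentials. Set
\begin{equation*}
U_{A_n}(z) := \frac{1}{n}\log\bigl|\det(\tfrac{1}{\sqrt n} A_n - zI)\bigr|, \qquad U_{B_n}(z) := \frac{1}{n}\log\bigl|\det(\tfrac{1}{\sqrt n} B_n - zI)\bigr|;
\end{equation*}
these are locally integrable functions on $\BBC$, equal to $\int_{\BBC}\log|z-w|\,d\mu_{\frac{1}{\sqrt n}A_n}(w)$ and its analogue. From the distributional identity $\Delta_z \log|z-w| = 2\pi\delta_w$ it follows that for any smooth compactly supported $f : \BBC \to \R$,
\begin{equation*}
\int_{\BBC} f\, d\mu_{\frac{1}{\sqrt n} A_n} = \frac{1}{2\pi}\int_{\BBC}\Delta f(z)\, U_{A_n}(z)\,dz,
\end{equation*}
and likewise for $B_n$. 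By mollifying continuous compactly supported test functions, vague convergence reduces to the smooth case. Writing $h_n := U_{A_n} - U_{B_n}$ and $K := \operatorname{supp}(\Delta f)$, the theorem is thus equivalent to
\begin{equation*}
I_n := \int_K |h_n(z)|\,dz \longrightarrow 0
\end{equation*}
in the prescribed mode. Hypothesis (ii) already provides pointwise convergence of $h_n(z)$ to zero for a.e.\ $z$; the only obstacle is domination, since $|h_n(z)|$ can spike when an eigenvalue of $\tfrac{1}{\sqrt n} A_n$ or $\tfrac{1}{\sqrt n} B_n$ approaches $z$.

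The key estimate is a uniform $L^2(K)$ bound on the potentials. Applying Cauchy--Schwarz to the $n$ summands in $U_{A_n}(z) = \frac{1}{n}\sum_i \log|\lambda_i(A_n)/\sqrt n - z|$ and integrating gives
\begin{equation*}
\int_K U_{A_n}(z)^2\,dz \;\le\; \frac{1}{n}\sum_{i=1}^n \int_K \bigl(\log|\lambda_i(A_n)/\sqrt n - z|\bigr)^2\,dz.
\end{equation*}
For each $w \in \BBC$ the inner integral is bounded by $C_K(1+|w|^2)$: the singularity of $(\log|z-w|)^2$ at $z=w$ is integrable in the plane, while for $|w|$ large $|\log|w-z||^2 = O(\log^2|w|) = O(|w|^2)$ uniformly on $K$. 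Summing and using the Weyl comparison inequality $\sum_i |\lambda_i(A_n)|^2 \le \|A_n\|_2^2$ yields
\begin{equation*}
\int_K U_{A_n}(z)^2\,dz \;\le\; C_K\bigl(1 + \tfrac{1}{n^2}\|A_n\|_2^2\bigr),
\end{equation*}
and similarly for $B_n$. Combined with $h_n^2 \le 2 U_{A_n}^2 + 2 U_{B_n}^2$, hypothesis (i) then makes $\int_K h_n(z)^2\,dz$ bounded in probability (resp.\ almost surely), uniformly in $n$.

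A truncation finishes the argument. For $T>0$ split $I_n = \int_K (|h_n|\wedge T)\,dz + \int_K (|h_n|-T)_+\,dz$. In the first piece, $|h_n(z)|\wedge T$ is bounded by $T$ and tends to $0$ in the prescribed mode for a.e.\ $z$, so bounded convergence in $\omega$ followed by dominated convergence in $z$ (via Fubini) shows this piece tends to $0$ in the required sense for every fixed $T$. In the second piece, Chebyshev gives
\begin{equation*}
\int_K (|h_n|-T)_+\,dz \;\le\; \frac{1}{T}\int_K h_n^2\,dz,
\end{equation*}
which by the preceding paragraph is $O(1/T)$ uniformly in $n$ in the appropriate sense. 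Sending $n\to\infty$ and then $T\to\infty$ yields $I_n\to 0$. The principal obstacle is the $L^2$ potential bound, which converts the purely qualitative hypothesis on $\|A_n\|_2$ into the equi-integrability needed to upgrade pointwise-in-$z$ convergence of $h_n$ to $L^1(K)$ convergence. For the almost sure statement one applies Fubini once more to pass from ``for each $z$ in a full measure set, $h_n(z)\to 0$ a.s.'' to ``almost surely, $h_n(z)\to 0$ for a.e.\ $z$'', on which almost sure event the $L^2$ bound plus truncation argument proceeds pointwise in $\omega$ (and amounts to Vitali's convergence theorem on $K$).
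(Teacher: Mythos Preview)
Your argument is correct. In fact it coincides with the ``alternate argument'' the paper sketches at the very end of its proof of the replacement principle (attributed there to Krishnapur): use the distributional identity $\Delta\log|P| = 2\pi\sum\delta_{\lambda_i}$ to write $\int f\,d(\mu_{\frac{1}{\sqrt n}A_n}-\mu_{\frac{1}{\sqrt n}B_n}) = \frac{1}{2\pi n}\int \Delta f\cdot h_n$, then invoke a dominated-convergence lemma whose uniform integrability hypothesis is supplied by the $L^2$ potential bound $\int_K(\log|w-z|)^2\,dz \le C_K(1+|w|^2)$ together with the Weyl inequality $\sum|\lambda_i|^2\le\|A\|_2^2$. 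Your truncation at level $T$ plus Chebyshev is exactly how one verifies that $L^2$-boundedness gives the uniform integrability needed; the paper packages this as its Lemma~3.1.

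The paper's \emph{primary} proof, by contrast, follows Girko's original Fourier-analytic route: pass to characteristic functions $m_{\frac{1}{\sqrt n}A_n}(u,v)$, apply Girko's identity expressing these in terms of the Stieltjes-like transform $g_{\frac{1}{\sqrt n}A_n}(z) = \partial_{\Re z}\frac{2}{n}\log|\det(\frac{1}{\sqrt n}A_n-zI)|$, perform explicit truncations in the Fourier variables, and only then integrate by parts back to the log-determinants. This is longer and less transparent; its main virtue is historical (it is how the problem was first attacked). Your approach is more direct, and the paper itself acknowledges it as a cleaner alternative. Both proofs ultimately rest on the same analytic core: hypothesis~(i) plus Weyl gives an $L^2$ bound on the log-potentials over compacta, which upgrades the pointwise-in-$z$ convergence from hypothesis~(ii) to the $L^1$ convergence one needs.
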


We would like to remark here that we do not need to require
independence among the entries of $A_n$ and $B_n$.
  The proof of this theorem is rather ``soft'' in nature, relying primarily on
the Stieltjes transform technique (following Girko \cite{girko})
that analyses the ESD $\mu_{\frac{1}{\sqrt{n}} A_n}$ in terms of the
log-determinants $\frac{1}{n} \log |\det(\frac{1}{\sqrt{n}} A_n -
zI)|$, combined with tools from classical real analysis such as the
dominated convergence theorem (see Lemma \ref{dom} for the precise
version of this theorem that we need). The details are given in
Section \ref{section:replacement}.

In view of Lemma \ref{tight}, we see that Theorem \ref{theorem:main1}
follows immediately from Theorem \ref{theorem:replacement}
 and the following proposition.

\begin{proposition}[Converging determinant]\label{lemma:determinant}
Let $\a$ and $ \b$ be complex random variables with zero mean and
unit variance. Let $X_n$ and $Y_n$ be $n \times n$ random matrices
whose entries are iid copies of $\a$ and $\b$, respectively. For
each $n$, let $M_n$ be a deterministic $n \times n$ matrix
satisfying \eqref{eqn:conditionM}.
Set $A_n:= M_n +X_n$ and $B_n:= M_n
+ Y_n$.
Then for every fixed $z \in \BBC$,
\begin{equation}\label{aziz}
\frac{1}{n} \log |\det(\frac{1}{\sqrt{n}} A_n - zI)| - \frac{1}{n}
\log |\det(\frac{1}{\sqrt{n}} B_n - zI)|
\end{equation}
converges in probability to zero.  If furthermore we assume that \eqref{must} converges to a limit for this value of $z$, then \eqref{aziz} converges almost surely to zero.
\end{proposition}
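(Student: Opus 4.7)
The plan is to use Girko's Hermitization trick, writing the log-determinants in terms of singular values and then reducing to the ESDs of certain Hermitian (sample covariance type) matrices where classical tools apply. Set $T_{A,n} := \tfrac{1}{\sqrt{n}} A_n - zI$ and $H_{A,n} := T_{A,n} T_{A,n}^{\ast}$, and let $\nu_{A,n}$ denote the ESD of $H_{A,n}$; define $T_{B,n}, H_{B,n}, \nu_{B,n}$ analogously. Then
$$ \frac{1}{n} \log |\det(T_{A,n})| = \frac{1}{2} \int_0^\infty \log t \, d\nu_{A,n}(t),$$
and similarly for $B_n$, so \eqref{aziz} equals $\tfrac{1}{2} \int_0^\infty \log t \, d(\nu_{A,n} - \nu_{B,n})(t)$, and the task is to show this tends to zero.

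I would decompose the integral into the three ranges $t \in (0, \eta)$, $t \in [\eta, R]$, and $t \in (R, \infty)$ for small $\eta$ and large $R$. The tail $(R, \infty)$ is controlled crudely: Lemma \ref{tight} gives $\int t \, d\nu_{A,n}(t) = \tfrac{1}{n} \trace(H_{A,n}) = O(1)$ in probability (and almost surely under the stronger hypothesis), so the tail contribution is $O(\log(R)/R)$ uniformly in $n$, vanishing as $R \to \infty$. The bulk piece $[\eta, R]$ reduces, since $\log t$ is bounded and continuous there, to showing that $\nu_{A,n} - \nu_{B,n}$ tends vaguely to zero. This is where I would invoke the Dozier--Silverstein theorem (Theorem \ref{theorem:DS}): under the standing assumption that the ESDs \eqref{must} converge, both $\nu_{A,n}$ and $\nu_{B,n}$ converge almost surely to the same Dozier--Silverstein limit, so the two integrals over $[\eta,R]$ have the same almost sure limit. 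For the in-probability statement without the convergence hypothesis, I would use a sub-subsequence argument based on the tightness afforded by \eqref{eqn:conditionM}: any subsequence of $n$ admits a further subsequence along which the ESD of $(\tfrac{1}{\sqrt{n}} M_n - zI)(\tfrac{1}{\sqrt{n}} M_n - zI)^\ast$ converges, and then Dozier--Silverstein applies to that subsequence.

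The main obstacle, and the place where all the real work lies, is the small-$t$ range $(0, \eta)$. Here $\log t \to -\infty$, and the quantity to control is
$$\int_0^\eta \log t \, d\nu_{A,n}(t) = \frac{1}{n} \sum_{i:\, \sigma_i^2 < \eta} \log \sigma_i^2(T_{A,n}),$$
which requires \emph{quantitative} estimates on the small singular values of $T_{A,n} = \tfrac{1}{\sqrt{n}}(M_n + X_n) - zI$. I would extract two inputs from the authors' companion paper \cite{TV-circular}: first, a polynomial lower bound on the least singular value, $\sigma_n(T_{A,n}) \ge n^{-B}$ with probability at least $1 - n^{-c}$, which prevents $\log \sigma_n^2$ from being catastrophically negative; second, a bound on the number of singular values in a dyadic window $[\alpha, 2\alpha]$, obtained from the negative second moment identity $\sum_i \sigma_i^{-2}(T) = \sum_j \dist(V_j, H_j)^{-2}$ (where $V_j$ are the columns of $T$ and $H_j$ the span of the remaining columns) combined with Talagrand concentration for the distances $\dist(V_j, H_j)$. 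Summing dyadically yields $\int_0^\eta \log t \, d\nu_{A,n}(t) = O(\eta^{c} \log(1/\eta))$ with probability $1 - o(1)$, and identically for $\nu_{B,n}$; choosing $\eta$ small after letting $n \to \infty$ gives the claimed smallness in probability.

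For the almost sure statement, the probabilistic bounds above are summable and can be upgraded via Borel--Cantelli, so the small-$t$ estimate holds almost surely with $\eta = \eta_n$ decaying slowly; combined with the almost sure Dozier--Silverstein limit and the deterministic tail bound, this closes the proof. The main difficulty throughout is the small-$t$ piece, which truly requires the deep least-singular-value machinery; the rest is a Girko-style rearrangement that is essentially bookkeeping once Theorem \ref{theorem:DS} and the singular-value bounds of \cite{TV-circular} are in hand.
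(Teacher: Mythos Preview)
Your overall architecture is correct and essentially what the paper does: Hermitize via singular values, handle the bulk $[\eta,R]$ by Dozier--Silverstein (Theorem~\ref{theorem:DS}), the tail $[R,\infty)$ by the trace bound from Lemma~\ref{tight}, and the region near zero by a combination of the least singular value bound from \cite{TV-circular} and a quantitative estimate on the intermediate small singular values. The subsequence extraction you describe for the in-probability case is exactly the reduction at the beginning of Section~\ref{detsec}.

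There is, however, a genuine gap in the small-$t$ step. You propose to apply the negative second moment identity (Lemma~\ref{lemma:twosum}) directly to the full square matrix $T=T_{A,n}$, so that each $H_j$ is the span of the remaining $n-1$ columns and has codimension~$1$. But then $\E\,\dist(V_j,H_j)^2$ is of order~$1$, while Talagrand's inequality (after truncating the entries at level $n^{0.1}$, as in the proof of Proposition~\ref{xw}) only controls fluctuations at scale $n^{0.1}$; the concentration is vacuous at codimension~$1$ and gives no lower bound on the distances. Indeed the claim would be false: for the square matrix one expects $\min_j \dist(V_j,H_j)$ to be of order $n^{-1/2}$, so $\sum_i \sigma_i^{-2}(T)$ is of order $n^2$ rather than $n$, and no dyadic summation based on this identity alone can recover your asserted bound $O(\eta^c\log(1/\eta))$.

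The fix---and this is the substantive idea you are missing---is to pass to \emph{rectangular} submatrices before invoking the identity. If $A'$ keeps only $n-k$ rows of $A_n - z\sqrt{n}I$, the relevant subspaces have codimension at least $k$; Proposition~\ref{xw} (which needs codimension $\ge n^{0.99}$ precisely so that $\sqrt{n-d}\gg n^{0.1}$) then gives $\dist_j\gtrsim\sqrt{k}$ with overwhelming probability, the negative second moment identity yields $\sum_j \sigma_j(A')^{-2}=O(n/k)$, and hence $\sigma_{n-2k}(A')\gtrsim k/\sqrt{n}$. Cauchy interlacing (Lemma~\ref{lemma:interlacing}) transfers this back to the full matrix as $\sigma_{n-i}(T)\gtrsim i/n$; this is Lemma~\ref{slib} and \eqref{eqn:lowersing}. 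From that point your dyadic summation goes through. The paper organizes the bookkeeping slightly differently (splitting by the row index in $|\det T|=\prod_i d_i(T)$ first, via Lemmas~\ref{highdim} and~\ref{lowdim}, and only afterward converting the low-index piece to singular values of the rectangular matrix $A_{n,n'}$), but the decomposition is morally the same as yours; the missing ingredient is the rectangular-matrix-plus-interlacing step.
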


For any square matrix $A$ of size $n$, let $\lambda_i(A)$ and $s_i(A)$
be the eigenvalues and singular values of $A$. Furthermore, let
$d_i(A)$ be the distance from the $i$th row vector of $A$ to the
subspace formed by the first $i-1$ row vectors. From linear algebra,
we have the fundamental identity

\begin{equation}\label{det}
|\det A| =\prod_{i=1}^n |\lambda_i(A)| = \prod_{i=1}^n s_i (A) =
\prod_{i=1}^n d_i (A).
\end{equation}

We will need to study the singular values and distances of
$\frac{1}{\sqrt{n}} A_n - zI$ and $\frac{1}{\sqrt{n}} B_n - zI$ in
order to estimate their determinants.  The proof of Proposition
\ref{lemma:determinant}, which occupies  Sections \ref{detsec},
\ref{section:highdim} and \ref{section:lowdim}, is the heart of the
paper. This proof relies on the following three ingredients:

\begin{itemize}
\item A result by  Dozier and Silverstein \cite{doz} that compares the ESD of the singular values of the matrices
$\frac{1}{\sqrt{n}} A_n - zI$ and $\frac{1}{\sqrt{n}} B_n - zI$. This will let us handle all the rows from $1$ to $(1-\delta) n$ for some small $\delta > 0$.
 \item
 A lower tail estimate for the distance between a random vector
  and a fixed subspace of relatively large co-dimension, using a concentration inequality of Talagrand
  \cite{Le}.
This will  handle the contribution of the rows between $(1-\delta)n$
and (say)  $n-n^{0.99}$.

  \item A polynomial lower bound for the least
    singular value of $\frac{1}{\sqrt{n}} A_n - zI$ and
     $\frac{1}{\sqrt{n}} B_n - zI$ from \cite{TV-circular, TV-lsv}.
     This bound enables us  to handle the contribution of the last $n^{0.99}$ rows.
\end{itemize}

\section{The replacement principle} \label{section:replacement}

The purpose of this section is to establish Theorem \ref{theorem:replacement}.
We begin with a version of the dominated convergence theorem.

\begin{lemma}[Dominated convergence]\label{dom}  Let $(X,\nu)$
be a finite measure space.  For each integer $n \geq 1$,
let $f_n: X \to \R$ be a random functions which are
 jointly measurable with respect to $X$ and the underlying probability space.  Assume that
\begin{itemize}
\item[(i)] (Uniform integrability) There exists $\delta > 0$ such
that $\int_X |f_n(x)|^{1+\delta}\ d\nu$ is bounded in probability (resp. almost surely).
\item[(ii)] (Pointwise convergence in probability) For $\nu$-almost every $x \in X$,
$f_n(x)$ converges in probability (resp. almost surely) to zero.
\end{itemize}
Then $\int_X f_n(x)\ d\nu(x)$ converges in probability (resp. almost surely) to zero.
\end{lemma}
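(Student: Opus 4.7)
The plan is to truncate $f_n$ at height $M$ and estimate the two pieces separately, using the $L^{1+\delta}$ hypothesis to control the tail and pointwise convergence to control the bounded part. Concretely, for every $M>0$,
\[
\int_X |f_n|\,d\nu \;\le\; T_n(M) + M^{-\delta}\int_X |f_n|^{1+\delta}\,d\nu, \qquad T_n(M):=\int_X \min(|f_n|,M)\,d\nu,
\]
where the tail bound uses $|f_n|\le M^{-\delta}|f_n|^{1+\delta}$ on $\{|f_n|>M\}$. The central step will be to show that $T_n(M)\to 0$ in probability (resp.\ almost surely) for each fixed $M$. For each $x$ in the co-null set of hypothesis (ii), $\min(|f_n(x)|,M)\to 0$ in probability, and since the integrand is bounded by $M$, the scalar dominated convergence theorem (applied in the probability variable) gives $\E\min(|f_n(x)|,M)\to 0$ pointwise in $x$. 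Since $\nu$ is finite and the integrand is bounded by $M$, Fubini together with a second application of dominated convergence on $(X,\nu)$ then shows $\E T_n(M)\to 0$, so Markov's inequality yields $T_n(M)\to 0$ in probability.

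For the \emph{in probability} conclusion, given $\alpha,\eta>0$, I would pick $K$ large enough so that $\P\bigl(\int|f_n|^{1+\delta}\,d\nu>K\bigr)<\eta$ for all large $n$ (by hypothesis (i)), and then $M$ large enough so that $M^{-\delta}K<\alpha/2$. The displayed inequality above yields
\[
\P\!\left(\left|\int f_n\,d\nu\right|>\alpha\right) \;\le\; \eta + \P(T_n(M)>\alpha/2),
\]
and the second term tends to zero by the previous paragraph. Since $\eta$ was arbitrary, the left-hand side tends to zero, which is the in-probability conclusion.

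For the \emph{almost sure} conclusion I would argue pathwise. A Fubini argument on the jointly measurable set $\{(\omega,x): f_n(x)(\omega)\not\to 0\}$, which has $\P$-measure $0$ for $\nu$-a.e.\ $x$ by the almost sure version of hypothesis (ii), shows that for $\P$-a.e.\ $\omega$ one has $f_n(x)(\omega)\to 0$ for $\nu$-a.e.\ $x$. Combined with the pathwise uniform $L^{1+\delta}$ bound from the almost sure version of hypothesis (i), the scalar Vitali convergence theorem applied to the deterministic sequence $x\mapsto f_n(x)(\omega)$ yields $\int f_n\,d\nu\to 0$ for $\P$-a.e.\ $\omega$.

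The only real subtlety I anticipate is legitimizing the two Fubini applications (once to commute expectation and $\nu$-integral, once to flip the probabilistic and measure-theoretic quantifiers in the a.s.\ case), which is why joint measurability is assumed in the statement; granted that, the remainder is a routine blend of truncation, Markov's inequality, and the classical scalar dominated convergence theorem.
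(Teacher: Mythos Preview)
Your proof is correct and follows essentially the same route as the paper: truncate at height $M$, control the tail via the $L^{1+\delta}$ bound and Markov/Chebyshev, and handle the bounded part by applying Fubini plus the scalar dominated convergence theorem to pass from pointwise convergence in probability to $\E T_n(M)\to 0$, then Markov's inequality. The only cosmetic differences are that the paper truncates with $f_n\I(|f_n|\le M)$ rather than $\min(|f_n|,M)$ and bounds the $\nu$-measure of $\{|f_n|\ge\eps\}$ rather than $\E\min(|f_n|,M)$ directly, and in the almost sure case the paper reduces to the deterministic instance of the in-probability argument rather than invoking Vitali by name; your packaging is arguably a touch cleaner.
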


\begin{proof}  We first prove the claim for convergence in probability.
We can normalise $\nu$ to be a probability measure.  Let $\eps > 0$ be arbitrary.  It suffices to show that
$$ \int_X f_n(x)\ d\nu(x) = O(\eps)$$
with probability $1-O(\eps)-o(1)$.

By hypothesis (i), we already know that with probability $1-O(\eps)-o(1)$, that
$$ \int_X |f_n(x)|^{1+\delta}\ d\nu(x) \leq C_\eps$$
for some $C_\eps$ depending on $\eps$.
This implies that
$$ \int_X f_n(x) \I( |f_n(x)| \geq M )\ d\nu(x) \leq C_\eps / M^\delta$$
for any $M > 0$, where $\I(E)$ denotes the indicator of an event $E$.  In particular, for $M$ large enough we have
$$ \int_X f_n(x) \I( |f_n(x)| \geq M )\ d\nu(x) \leq \eps,$$
with probability $1-O(\eps)-o(1)$, and so it will suffice to show that
\begin{equation}\label{so}
\int_X f_n(x) \I( |f_n(x)| \leq M )\ d\nu(x) = O(\eps)
\end{equation}
with probability $1-o(1)$.

Fix $M$.  By hypothesis, we have $\lim_{n \to \infty} \P( |f_n(x)| \geq \eps ) = 0$ for $\nu$-almost every $x \in X$.  By the dominated convergence theorem, we conclude that
$$ \int_X \P( |f_n(x)| \geq \eps )\ d\nu(x) = o(1).$$
By Fubini's theorem, we conclude that
$$ \E \int_X \I( |f_n(x)| \geq \eps )\ d\nu(x) = o(1)$$
and so by Markov's inequality, we have
$$ \int_X \I( |f_n(x)| \geq \eps )\ d\nu(x) = O(\eps/M)$$
with probability $1-o(1)$.  The claim \eqref{so} easily follows.

Now we prove the claim for almost sure convergence.  Again we let $\nu$ be a probability measure and $\eps > 0$ be arbitrary.  With probability $1-O(\eps)$ we have
$$ \int_X |f_n(x)|^{1+\delta}\ d\nu(x) \leq C_\eps$$
for all sufficiently large $n$, and some $C_\eps$ depending on $n$.  Also, with probability $1$, $f_n(x)$ converges to zero for almost every $x$.  The claim now follows by invoking (the deterministic special case of) the convergence in probability version of the lemma that we have just proven.
\end{proof}

Now we begin the proof of Theorem \ref{theorem:replacement}. We thus assume that $A_n, B_n$ are as in that theorem.  We shall first prove the claim for convergence in probability, and indicate later how to modify the proof to obtain the principle for almost sure convergence.

From the boundedness in probability of \eqref{pan} and Weyl's comparison inequality (Lemma \ref{compar}) we see that for every $\eps > 0$ there exists $C_\eps > 0$ such that for each $n$, the eigenvalues $\lambda_1,\ldots,\lambda_n$ of $A_n$ obey the bound
\begin{equation}\label{lam}
 \sum_{j=1}^n \frac{1}{n^2} |\lambda_j|^2 \leq C_\eps
 \end{equation}
or equivalently that
$$ \int_\BBC |z|^2\ d\mu_{\frac{1}{\sqrt{n}} A_n}(z) \leq C_\eps$$
with probability $1-O(\eps)-o(1)$. Similarly we have
$$ \int_\BBC |z|^2\ d\mu_{\frac{1}{\sqrt{n}} B_n}(z) \leq C_\eps.$$
In particular, for each $n$ we see that with probability $1-O(\eps)-o(1)$ we have the tightness bounds
\begin{equation}\label{tight-1}
 \mu_{\frac{1}{\sqrt{n}} A_n} \{ z \in \BBC: |z| \geq R \} \leq C_\eps/R^2
 \end{equation}
and
\begin{equation}\label{tight-2}
\mu_{\frac{1}{\sqrt{n}} B_n} \{ z \in \BBC: |z| \geq R \} \leq C_\eps/R^2
\end{equation}
for all $R > 0$.

We now take the standard step of passing from the ESDs
$\mu_{\frac{1}{\sqrt{n}} A_n}, \mu_{\frac{1}{\sqrt{n}} B_n}$ to the
characteristic functions $m_{\frac{1}{\sqrt{n}} A_n},
m_{\frac{1}{\sqrt{n}} B_n}: \R^2 \to \BBC$, which are defined by the
formulae
\begin{align*}
m_{\frac{1}{\sqrt{n}} A_n}(u,v) &:= \int_\BBC e^{i u \Re(z) + i v \Im(z)}\ d\mu_{\frac{1}{\sqrt{n}} A_n}(z) \\
m_{\frac{1}{\sqrt{n}} B_n}(u,v) &:= \int_\BBC e^{i u \Re(z) + i v
\Im(z)}\ d\mu_{\frac{1}{\sqrt{n}} B_n}(z)
\end{align*}
thus the functions $m_{\frac{1}{\sqrt{n}} A_n}, m_{\frac{1}{\sqrt{n}} B_n}$ are continuous
 and are bounded uniformly in magnitude by $1$.

Thanks to the tightness bounds \eqref{tight-1}-\eqref{tight-2}, we can easily pass back and forth between convergence of ESDs and convergence of characteristic functions:

\begin{lemma}\label{probconv} Let the notation and assumptions be as above.  Then the following are equivalent:
\begin{itemize}
\item[(i)] $\mu_{\frac{1}{\sqrt{n}} A_n} - \mu_{\frac{1}{\sqrt{n}} B_n}$ converges in probability.
\item[(ii)] For almost every $u,v$, $m_{\frac{1}{\sqrt{n}} A_n}(u,v) - m_{\frac{1}{\sqrt{n}} B_n}(u,v)$ converges in probability.
\end{itemize}
\end{lemma}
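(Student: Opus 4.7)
The plan is to prove the equivalence using the standard L\'evy continuity recipe, adapted to convergence in probability via Lemma \ref{dom}. The tightness bounds \eqref{tight-1}--\eqref{tight-2} are the key tool that allows one to pass between test functions that are merely bounded continuous (the characters $e^{iu\Re z + iv\Im z}$) and test functions that are continuous compactly supported (the vague topology).

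For (i) $\Rightarrow$ (ii), I fix $(u,v) \in \R^2$ and choose a continuous compactly supported cutoff $\psi_R : \C \to [0,1]$ equal to $1$ on $\{|z|\le R\}$. I would write
$$ m_{\frac{1}{\sqrt{n}} A_n}(u,v) - m_{\frac{1}{\sqrt{n}} B_n}(u,v) = \int e^{iu\Re z + iv\Im z}\,\psi_R(z)\, d(\mu_{\frac{1}{\sqrt{n}} A_n} - \mu_{\frac{1}{\sqrt{n}} B_n})(z) + E_n, $$
where $|E_n| \le \mu_{\frac{1}{\sqrt{n}} A_n}\{|z|>R\} + \mu_{\frac{1}{\sqrt{n}} B_n}\{|z|>R\} \le 2C_\eps/R^2$ on an event of probability at least $1 - O(\eps) - o(1)$ by \eqref{tight-1}--\eqref{tight-2}. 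The remaining integral tests the signed measure $\mu_{\frac{1}{\sqrt{n}} A_n} - \mu_{\frac{1}{\sqrt{n}} B_n}$ against a fixed continuous compactly supported function, hence converges to zero in probability by (i). Sending first $n \to \infty$ and then $R \to \infty$ yields the claim for every $(u,v)$.

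For (ii) $\Rightarrow$ (i), fix a test function $f : \C \to \R$. I first approximate $f$ uniformly to within $\eps$ by a Schwartz function $g$ (e.g.\ by convolving $f$ with a narrow Schwartz bump); since both ESDs are probability measures, the resulting error in $\int f\, d(\mu_{\frac{1}{\sqrt{n}} A_n} - \mu_{\frac{1}{\sqrt{n}} B_n})$ is at most $2\eps$ deterministically. For the Schwartz function $g$, Fourier inversion and Fubini give
$$ \int g\, d(\mu_{\frac{1}{\sqrt{n}} A_n} - \mu_{\frac{1}{\sqrt{n}} B_n}) = \frac{1}{(2\pi)^2} \int_{\R^2} \hat g(u,v)\,\bigl[m_{\frac{1}{\sqrt{n}} A_n}(u,v) - m_{\frac{1}{\sqrt{n}} B_n}(u,v)\bigr]\, du\, dv. $$
I then apply Lemma \ref{dom} on the finite measure space $(\R^2, \nu)$ with $d\nu := |\hat g(u,v)|\, du\, dv$ (finite since $\hat g$ is Schwartz), and integrand $F_n(u,v) := \frac{\hat g(u,v)}{|\hat g(u,v)|}\bigl(m_{\frac{1}{\sqrt{n}} A_n}(u,v) - m_{\frac{1}{\sqrt{n}} B_n}(u,v)\bigr)$ (set to $0$ where $\hat g$ vanishes). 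Since $|F_n| \le 2$ deterministically, the uniform integrability hypothesis of Lemma \ref{dom} is trivially satisfied, while by (ii) the pointwise a.e.\ convergence in probability hypothesis holds. Lemma \ref{dom} then yields $\int F_n\, d\nu \to 0$ in probability, and letting $\eps \to 0$ closes the argument.

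The main obstacle is really just bookkeeping around two distinct levels of approximation: one uses tightness to trade bounded continuous tests (characters) for compactly supported ones (in the forward direction), and Schwartz approximation to trade compactly supported continuous tests for Schwartz ones with $L^1$ Fourier transform (in the backward direction). Once these are in place, the rest is a routine application of the probabilistic dominated convergence theorem of Lemma \ref{dom}.
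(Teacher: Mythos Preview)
Your proof is correct and follows essentially the same approach as the paper: tightness plus a cutoff for (i)$\Rightarrow$(ii), and Fourier inversion combined with Lemma~\ref{dom} for (ii)$\Rightarrow$(i). The only cosmetic differences are that the paper approximates $f$ by smooth compactly supported functions rather than Schwartz functions, and applies Lemma~\ref{dom} with $d\nu=\hat f(u,v)\,du\,dv$ directly rather than splitting off the phase as you do; your treatment of the measure in Lemma~\ref{dom} is arguably the more careful of the two.
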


\begin{proof}  We first show that (i) implies (ii).  Fix $u,v$, and let $\eps > 0$ be arbitrary.
From \eqref{tight-1}, \eqref{tight-2} we can find an $R$ depending on $C_\eps$ and $\eps$ such that
$$ \mu_{\frac{1}{\sqrt{n}} A_n}(\{ z \in \BBC: |z| \geq R \})+
\mu_{\frac{1}{\sqrt{n}} B_n}(\{ z \in \BBC: |z| \geq R \}) \leq \eps$$
with probability $1-O(\eps)-o(1)$.  In particular, with probability $1-O(\eps)-o(1)$ we have
$$
m_{\frac{1}{\sqrt{n}} B_n}(u,v) - m_{\frac{1}{\sqrt{n}} A_n}(u,v) = \int \psi(z/R) e^{i u \Re(z) + i v \Im(z)}\ [d\mu_{\frac{1}{\sqrt{n}} B_n}(z) - d\mu_{\frac{1}{\sqrt{n}} A_n}(u,v)(z)] + O(\eps)$$
where $\psi$ is any smooth compactly supported function that equals one on the unit ball.
But since $\mu_{\frac{1}{\sqrt{n}} B_n} - \mu_{\frac{1}{\sqrt{n}} A_n}$ converges in probability, the integral here converges to zero in probability.  The claim follows.

Now we prove that (ii) implies (i).  Since continuous compactly
supported functions are the uniform limit of smooth compactly
supported functions, it suffices to show that $\int_\BBC f\
d\mu_{\frac{1}{\sqrt{n}} A_n} - \int_\BBC f\ d\mu_{\frac{1}{\sqrt{n}} B_n}$ converges in probability to zero for
every smooth compactly supported function $f: \BBC \to \BBC$.

Now fix a smooth compactly supported function $f: \BBC \to \BBC$.
By Fourier analysis, we can write
\begin{equation}\label{mufan}
\int_\BBC f\ d\mu_{\frac{1}{\sqrt{n}} A_n} - \int_\BBC f\ d\mu_{\frac{1}{\sqrt{n}} B_n} =
\int_\R \int_\R \hat f(u,v) (m_{\frac{1}{\sqrt{n}} A_n}(u,v) -
m_{\frac{1}{\sqrt{n}} B_n}(u,v))\ du dv
\end{equation}
for some smooth, rapidly decreasing function $\hat f$.  In particular, the measure $d\nu = \hat f(u,v)\ du dv$ is finite.  The claim now follows from dominated convergence (Lemma \ref{dom}); note that the function $m_{\frac{1}{\sqrt{n}} A_n} - m_{\frac{1}{\sqrt{n}} B_n}$ is bounded and so clearly obeys the moment condition required in that lemma.
\end{proof}

In view of the above lemma, it suffices to show that
 $m_{\frac{1}{\sqrt{n}} A_n}(u,v) - m_{\frac{1}{\sqrt{n}} B_n}(u,v)$
 converges in probability to zero for almost every $u,v \in \R$.

Fix $u,v$. Since we can exclude a set of measure zero, we can assume
that $u,v$ are non-zero.  We allow all implied constants in the
arguments below to depend on $u,v$.

Following Girko \cite{girko}, we now proceed via the Stieltjes-like
transform $g_{\frac{1}{\sqrt{n}} A_n}: \BBC \to \R$, defined almost
everywhere by the formula
\begin{equation}\label{gobs}
\begin{split}
 g_{\frac{1}{\sqrt{n}} A_n}(z) &:= 2\Re \int_\BBC \frac{z-w}{|z-w|^2}\ d\mu_{\frac{1}{\sqrt{n}}}(w)\\
 &= \frac{2}{n} \Re \sum_{j=1}^n \frac{z - \frac{1}{\sqrt{n}} \lambda_j}{|z - \frac{1}{\sqrt{n}} \lambda_j|^2};
\end{split}
\end{equation}
observe that this is a locally integrable function on $\BBC$, and
that
\begin{equation}\label{snort}
g_{\frac{1}{\sqrt{n}} A_n}(z) = \frac{\partial}{\partial \Re(z)} \frac{2}{n} \log |\det( \frac{1}{\sqrt{n}} A_n - z I )|
\end{equation}
for all but finitely many $z$.

We have the following fundamental identity:

\begin{lemma}[Girko's identity]\label{girko-lemma}\cite{girko}  For every non-zero $u,v$ we have
$$ m_{\frac{1}{\sqrt{n}} A_n}(u,v) = \frac{u^2+v^2}{4\pi iu} \int_\R (\int_\R g_{\frac{1}{\sqrt{n}} A_n}(s+it) e^{i u s + i v t}\ dt) ds,$$
where the inner integral is absolutely integrable for almost every $s$, and the outer integral is absolutely convergent.
\end{lemma}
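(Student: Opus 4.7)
The plan is to expand $g_{\frac{1}{\sqrt{n}} A_n}$ as a finite sum of Poisson-type kernels, reduce by translation to a single two-dimensional Fourier calculation, and then evaluate that calculation as an iterated (conditionally convergent) integral. Writing each eigenvalue as $\lambda_j/\sqrt{n} = a_j + i b_j$, the explicit formula \eqref{gobs} gives
$$g_{\frac{1}{\sqrt{n}} A_n}(s+it) = \frac{2}{n} \sum_{j=1}^n \frac{s-a_j}{(s-a_j)^2 + (t-b_j)^2}.$$
Since this is a finite sum, by linearity it suffices to establish, for each fixed $w = a + ib \in \BBC$, the single identity
$$\int_\R \left(\int_\R \frac{s-a}{(s-a)^2+(t-b)^2} e^{ius+ivt}\ dt\right)\, ds = \frac{2\pi iu}{u^2+v^2}\, e^{iua+ivb}. \qquad (\star)$$
Summing $(\star)$ over $j$ with the prefactor $2/n$ and multiplying by $\frac{u^2+v^2}{4\pi iu}$ then recovers the definition $m_{\frac{1}{\sqrt{n}} A_n}(u,v) = \frac{1}{n}\sum_j e^{iua_j+ivb_j}$.

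To prove $(\star)$, I would first translate $(s,t)\mapsto(s+a,t+b)$ to reduce to the case $w=0$, pulling out a factor of $e^{iua+ivb}$. The inner integral is then handled by contour integration: for each fixed $s\neq 0$, absolute integrability in $t$ is immediate since $\int \frac{|s|}{s^2+t^2}\,dt<\infty$, and the standard Poisson-kernel residue computation yields
$$\int_\R \frac{s}{s^2+t^2}\, e^{ivt}\ dt = \pi\, \sgn(s)\, e^{-|v||s|}.$$
Undoing the translation accounts for the ``almost every $s$'' clause in the statement, since the singular set for the full sum is just the finite set $\{a_j\}$. Next, the outer integral is absolutely convergent because $v\neq 0$ forces exponential decay in $s$, and a direct evaluation gives
$$\pi\int_\R \sgn(s)\, e^{-|v||s|}\, e^{ius}\ ds = \pi\left(\frac{1}{|v|-iu} - \frac{1}{|v|+iu}\right) = \frac{2\pi iu}{u^2+v^2},$$
which is $(\star)$ at $w=0$.

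The main subtlety, and the reason the identity is phrased with iterated rather than double integrals, is that $\frac{s-a}{(s-a)^2+(t-b)^2} e^{ius+ivt}$ is not in $L^1(\R^2)$, so Fubini's theorem is unavailable and the order of integration truly matters. The argument works only because the oscillation $e^{ivt}$ collapses the inner integral into a function of $s$ that decays exponentially, which then interacts with the oscillation $e^{ius}$ to give an absolutely convergent outer integral—both steps relying crucially on the hypotheses $u\neq 0$ and $v\neq 0$. Beyond this bookkeeping of convergence, the proof is a routine Fourier-analytic computation applied one eigenvalue at a time.
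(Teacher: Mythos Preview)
Your proposal is correct and follows essentially the same route as the paper: reduce by linearity to a single eigenvalue $w$, evaluate the inner $t$-integral by contour integration to obtain $\pi\,\sgn(s-\Re w)\,e^{-|v||s-\Re w|}e^{ius}e^{iv\Im w}$, and then perform the elementary outer $s$-integral. Your version is in fact slightly more explicit (you carry out the outer integration in closed form and correctly write $e^{-|v||s|}$ where the paper has what appears to be a typo $e^{-v|s-\Re(w)|}$); the only minor inaccuracy is the remark that ``both steps rely crucially on $u\neq 0$''---the two integrals themselves only need $v\neq 0$ for absolute convergence, while $u\neq 0$ is required solely to invert the prefactor $\frac{u^2+v^2}{4\pi i u}$.
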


\begin{proof} We argue as in \cite[Lemma 3.1]{bai}.  Since
$$
m_{\frac{1}{\sqrt{n}} A_n}(u,v) = \frac{1}{n} \sum_{j=1}^n e^{i (u \Re( \frac{1}{\sqrt{n}} \lambda_j ) + v \Im( \frac{1}{\sqrt{n}} \lambda_j )) }$$
it suffices from \eqref{gobs} to show that
$$ e^{i(u \Re(w) + v \Im(w))} = \frac{u^2+v^2}{2\pi iu} \int_\R (\int_\R \frac{\Re(s+it-w)}{|s+it-w|^2} e^{i u s + i v t}\ dt) ds$$
for each complex number $w$, with an absolutely convergent inner integral and outer integral.  But standard contour integration shows that
\begin{equation}\label{contour}
\int_\R \frac{\Re(s+it-w)}{|s+it-w|^2} e^{i u s + i v t}\ dt = \pi \sgn(s-\Re(w)) e^{-v|s-\Re(w)|} e^{ius} e^{iv\Im(w)}
\end{equation}
for every $s \neq \Re(w)$, and the claim follows by an elementary integration.
\end{proof}

We can of course define $g_{\frac{1}{\sqrt{n}} B_n}$ similarly, with analogous identities.  To conclude the proof of Theorem \ref{theorem:replacement}, it thus suffices to show that for any $\eps > 0$ and any $n$, we have
\begin{equation}\label{grow}
\int_\R (\int_\R (g_{\frac{1}{\sqrt{n}} A_n}(s+it)-g_{\frac{1}{\sqrt{n}} B_n}(s+it)) e^{i u s + i v t}\ dt) ds = O(\eps)
\end{equation}
with probability $1-O(\eps)-o(1)$.

Fix $\eps > 0$.  By \eqref{tight-1}, \eqref{tight-2}, we can find an $R > 1$ large enough that with probability $1-O(\eps)$,
\begin{equation}\label{mob}
\mu_{\frac{1}{\sqrt{n}} A_n}(\{ z \in \BBC: |z| \geq R \}) +
\mu_{\frac{1}{\sqrt{n}} B_n}(\{ z \in \BBC: |z| \geq R \}) \leq \eps.
\end{equation}
We now condition on the event that \eqref{mob} holds.

We now smoothly localize the $z$ variable to a compact set as follows.
Let $\psi:\R \to \R^+$ be a smooth cutoff function which equals $1$ on $[-1,1]$ and is supported on $[-2,2]$.

\begin{lemma}[Truncation in $s, t$]\label{trunc}  Let $w \in \BBC$.
\begin{itemize}
\item[(i)] The integral
$$ \int_\R |\int_\R \frac{\Re(w-(s+it))}{|w-(s+it)|^2} e^{i u s + i v t}\ dt| (1 - \psi(s/R^2))\ ds$$
is of size $O(1)$, and (if $R$ is large enough) is of size $O(\eps)$ when $|w| \leq R$.
\item[(ii)] The integral
\begin{equation}\label{int}
 \int_\R |\int_\R \frac{\Re(w-(s+it))}{|w-(s+it)|^2} e^{i u s + i v t} (1 - \psi(t/R^2))\ dt| \psi(s/R^2)\ ds
 \end{equation}
is of size $O(1)$, and (if $R$ is large enough) is of size $O(\eps)$ when $|w| \leq R$.
\end{itemize}
\end{lemma}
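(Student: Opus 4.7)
\emph{Proof plan.} Both parts hinge on the contour-integration formula \eqref{contour}, which evaluates the full inner $t$-integral (without cutoff) explicitly, giving a quantity of absolute value $\pi e^{-|v||s-\Re(w)|}$. Recall $v\neq 0$ throughout.

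Part (i) is immediate. Since the cutoff is in the outer $s$-variable, the inner $t$-integral retains this explicit form. For general $w$, $\int_\R \pi e^{-|v||s-\Re(w)|}\,ds = 2\pi/|v| = O(1)$. For $|w|\le R$ with $R\ge 2$, the support $\{|s|\ge R^2\}$ of $(1-\psi(s/R^2))$ forces $|s-\Re(w)|\ge R^2-R\ge R^2/2$, so the integrand is pointwise bounded by $\pi e^{-|v|R^2/2}$, and translating $s'=s-\Re(w)$ yields the bound $2\pi|v|^{-1}e^{-|v|R^2/2}$, which is $O(\eps)$ for $R$ large.

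For part (ii), the inner integral $I(s)$ with the $t$-cutoff admits no closed form, so we combine three pointwise bounds. Write $a:=\Re(w)-s$, $b:=\Im(w)$, and $g(t):=a(1-\psi(t/R^2))/(a^2+(b-t)^2)$. The bounds are: (a) the trivial bound $|I(s)|\le \int_\R \frac{|a|\,dt}{a^2+(b-t)^2}=\pi$; (b) two integrations by parts in $t$ against $e^{ivt}$ yielding $|I(s)|\le v^{-2}\int_\R|g''(t)|\,dt$, where a routine Leibniz expansion together with the identities $\int_\R(a^2+u^2)^{-k}\,du = O(|a|^{1-2k})$ gives $\int|g''|\,dt \le C(R^{-4}+|a|^{-1}R^{-2}+|a|^{-2})$, the $|a|^{-2}$ term arising when both derivatives hit the rational factor; and (c), only when $|w|\le R$, the improved trivial bound $|I(s)|\le |a|\int_{|t|\ge R^2}(b-t)^{-2}\,dt\le 4|a|/R^2$, obtained from $|b|\le R$ and $|b-t|\ge R^2/2$ on the support of $(1-\psi(t/R^2))$.

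For general $w$, (a) and (b) together yield $|I(s)|\le C\min(1,|a|^{-2})$ after absorbing the smaller $R$-dependent terms. Integrating against $\psi(s/R^2)$, the region $|a|\le 1$ (an interval of length $2$ in $s$) contributes $O(1)$ from the constant bound, and the region $|a|\ge 1$ contributes $\int|a|^{-2}\,ds=O(1)$, giving the $O(1)$ claim. For $|w|\le R$, combining all three bounds gives $|I(s)|\le C\min(1,|a|/R^2,|a|^{-2})$; the last two balance at $|a|\sim R^{2/3}$, and on each side of this threshold a direct integration yields outer integral $O(R^{-2/3})$, which is $O(\eps)$ for $R$ large. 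The main obstacle is the careful bookkeeping in step (b) --- expanding $g''$ via Leibniz and verifying that $|a|^{-2}$ is the dominant contribution --- together with the balancing in the three-way minimum; the analytic content of each step is otherwise routine.
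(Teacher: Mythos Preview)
Your argument is correct. Part (i) and the $O(1)$ bound in part (ii) match the paper's proof essentially verbatim: the contour formula \eqref{contour} for (i), and the combination of the trivial bound $\pi$ with two integrations by parts for (ii). One small imprecision: when you write that (a) and (b) together yield $|I(s)|\le C\min(1,|a|^{-2})$ ``after absorbing the smaller $R$-dependent terms'', note that for large $|a|$ the term $R^{-4}$ is \emph{not} dominated by $|a|^{-2}$; however, its integral over the support of $\psi(s/R^2)$ is $O(R^{-2})$, and similarly $R^{-2}\int_{|a|\ge 1}|a|^{-1}\,ds = O(R^{-2}\log R)$, so the conclusion stands.

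Where you genuinely diverge from the paper is in the $|w|\le R$ case of (ii). You introduce a third ``improved trivial'' bound (c) and balance $\min(|a|/R^2,\,|a|^{-2})$ at $|a|\sim R^{2/3}$ to obtain $O(R^{-2/3})$ for the outer integral. The paper instead observes that when $|w|\le R$, the support of $1-\psi(t/R^2)$ forces $|b-t|\gtrsim R^2$, and feeds this directly into the integration-by-parts estimate: each derivative of $h(t)=1/(a^2+(b-t)^2)$ then gains a factor $R^{-2}$ rather than $|a|^{-1}$, yielding $\int|g''|\,dt = O(|a|R^{-6})$ and hence $|I(s)|=O(R^{-4})$ uniformly in $s$ on the support of $\psi(s/R^2)$ (where $|a|\le O(R^2)$). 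Integrating over that support then gives $O(R^{-2})$. This avoids the balancing step and the auxiliary bound (c) entirely, and gives a sharper decay rate, though of course either bound suffices for the lemma.
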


\begin{proof}  The claim (i) follows easily from \eqref{contour}, so we turn to (ii).  We first verify the claim that \eqref{int} is bounded.  Replacing everything by absolute values one sees that
$$
|\int_\R \frac{\Re(w-(s+it))}{|w-(s+it)|^2} e^{i u s + i v t} (1 - \psi(t/R^2))\ dt| = O(1)$$
(in fact one can obtain an explicit upper bound of $\pi$), so we can dispose of the region of integration
in which $s = \Re(w)+O(1)$.  For the remaining values of $s$, we use repeated integration by parts,
integrating the $e^{ivt}$ term and differentiating the others.
After two such integrations we obtain the bound
$$|\int_\R \frac{\Re(w-(s+it))}{|w-(s+it)|^2} e^{i u s + i v t}
(1 - \psi(t/R^2))\ dt| = O( ( R^{-2} + |s-\Re(w)|^{-1} )^{2} ).$$
The claim then follows.

Finally, if $|w| \leq R$, then one easily verifies (by repeated integration by parts) that
$$
\int_\R \frac{\Re(w-(s+it))}{|w-(s+it)|^2} e^{i u s + i v t} (1 - \psi(t/R^2))\ dt = O(1/R^{4})$$
(say), and so the final claim of (ii) follows.
\end{proof}

From this lemma and \eqref{gobs}, the triangle inequality and \eqref{mob} we conclude that
\begin{equation}\label{j1}
\int_\R (\int_\R g_{\frac{1}{\sqrt{n}} A_n}(s+it) e^{i u s + i v t}\ dt) (1 - \psi(s/R^2)) ds = O(\eps).
\end{equation}
and
\begin{equation}\label{j2}
\int_\R (\int_\R g_{\frac{1}{\sqrt{n}} A_n}(s+it) e^{i u s + i v t} (1 - \psi(t/R^2)) \ dt) \psi(s/R^2) ds = O(\eps).
\end{equation}
From \eqref{j1}, \eqref{j2} (and their counterparts for $g_{\frac{1}{\sqrt{n}} B_n}$) and the triangle inequality, we thus see that to prove \eqref{grow}, it suffices to show that
\begin{equation}\label{joy}
\int_\R \int_\R (g_{\frac{1}{\sqrt{n}} A_n}(s+it)-g_{\frac{1}{\sqrt{n}} B_n}(s+it)) e^{i u s + i v t} \psi(t/R^2) \psi(s/R^2)\ dt ds
\end{equation}
converges in probability to zero for every fixed $R \geq 1$.  Note that the
integrands here are now jointly absolutely integrable in $t,s$, and
so we may now freely interchange the order of integration.

Fix $R$.  Using \eqref{snort} and integration by parts in the $s$ variable, we can rewrite \eqref{joy} in the form
$$
\int_\R \int_\R f_n(s,t) \phi_{u,v,R}(s,t)\ ds dt$$
where
$$ f_n(s,t) := \frac{1}{n} \log |\det( \frac{1}{\sqrt{n}} A_n - z I )| - \frac{1}{n} \log |\det( \frac{1}{\sqrt{n}} B_n - z I )|$$
and
$$ \phi_{u,v,R}(s,t) := - \frac{\partial}{\partial s} ( e^{i u s + i v t} \psi(t/R^2) \psi(s/R^2) ).$$
(Note that there are finitely many values of $t$ for which the integration by parts is not
justified due to singularities in $g_{\frac{1}{\sqrt{n}} A_n}$ or $g_{\frac{1}{\sqrt{n}} B_n}$, but these values of $t$ clearly give a zero contribution at the end of the day.)  Thus it will suffice to show that
$$
\int_\R \int_\R |f_n(s,t)| |\phi_{u,v,R}(s,t)|\ ds dt$$
converges in probability to zero.

From \eqref{det} we have
\begin{equation}\label{det2}
 \frac{1}{n} \log |\det( \frac{1}{\sqrt{n}} A_n - z I )| =
\frac{1}{n} \sum_{j=1}^n \log | \frac{1}{\sqrt{n}} \lambda_j - (s+it)|
\end{equation}
and similarly for $B_n$.  From the boundedness and compact support of $\phi_{u,v,R}$ we observe that
$$
\int_\R \int_\R \log| \frac{1}{\sqrt{n}} \lambda - (s+it)|^2 |\phi_{u,v,R}(s,t)|\ ds dt \leq O_{\phi_{u,v,R}}(1 + \frac{1}{n} |\lambda|^2)$$
for all $\lambda \in \C$; from this, \eqref{det2}, \eqref{lam}, and the triangle inequality we see that
\begin{equation}\label{fbound}
\int_\R \int_\R |f_n(s,t)|^2 |\phi_{u,v,R}(s,t)|\ ds dt
\end{equation}
is bounded
uniformly in $n$.  Since by hypothesis $f_n(s,t)$ converges in probability to zero
for almost every $s,t$, the claim now follows from dominated
convergence (Lemma \ref{dom}). The proof of Theorem
\ref{theorem:replacement} is now complete in the case of convergence in probability.

\subsection{The almost sure convergence case}  We now indicate how to adapt the above arguments to the case of almost sure convergence.  Firstly, since \eqref{pan} is now almost surely bounded instead of just bounded in probability, we can now say that for every $\eps > 0$ there exists $C_\eps > 0$ such that with probability $1-O(\eps)$, \eqref{tight-1}, \eqref{tight-2} holds for all sufficiently large $n$ (as opposed to these bounds holding with probability $1-O(\eps)-o(1)$ for each $n$ separately).

Next, we observe the (well-known) fact that Lemma \ref{probconv} continues to hold when convergence in probability is replaced by almost sure convergence throughout.  Indeed the implication of (ii) from (i) is nearly identical and is left as an exercise to the reader.  To deduce (i) from (ii) in the almost sure case, observe from the separability of the space of smooth compactly supported functions in the uniform topology that it suffices to show that \eqref{mufan} converges almost surely to zero for each $f$.  On the other hand, from (ii) and Fubini's theorem we know that with probability $1$, that $m_{\frac{1}{\sqrt{n}} A_n}(u,v) - m(u,v)$ converges to zero for almost every $u,v$, and the claim follows from the (ordinary) dominated convergence theorem.

Once again we use Girko's identity, Lemma \ref{girko-lemma}, and reduce to showing that for every $\eps > 0$, one has with probability $1-O(\eps)$ that \eqref{grow} holds for all but finitely many $n$.  From our bounds on \eqref{tight-1}, \eqref{tight-2} we see that with probability $1-O(\eps)$, that \eqref{mob} holds for all but finitely many $n$.  We apply Lemma \ref{trunc} (which is deterministic) and reduce to showing that \eqref{joy} converges almost surely to zero for each fixed $R \geq 1$.  The rest of the argument proceeds as in the convergence in probability case.

\subsection{An alternate argument}

There is an alternate derivation\footnote{We thank Manjunath Krishnapur for this simpler argument.} of Theorem \ref{theorem:replacement} that avoids Fourier analysis, and is instead based on the observation that for any complex polynomial $P(z)$, the distributional Laplacian $\Delta \log |P(z)|$ of the logarithm of the magnitude of $P$ is equal to the counting measure of the zeroes of $P$ (counting multiplicity).  In particular, we see from Green's theorem that
$$\int_{\C} f\ d(\mu_{\frac{1}{\sqrt{n}} A_n} - \mu_{\frac{1}{\sqrt{n}} B_n}) = 
\frac{1}{2\pi n} \int_\C (\Delta f(z)) \log |\det(\frac{1}{\sqrt{n}} A_n - zI)| - \frac{1}{n} \log |\det(\frac{1}{\sqrt{n}} B_n -
 zI)|$$
for any smooth, compactly supported $f$.  Applying Lemma \ref{dom} we can then get convergence of this integral (either in probability or in the almost sure sense, as appropriate); the uniform integrability required can be established by repeating the computations used to bound \eqref{fbound}.  One can then easily take limits to replace smooth compactly supported $f$ to continuous compactly supported $f$; we omit the details.

\section{Proof of Proposition \ref{lemma:determinant}} \label{detsec}

In this section we present  the proof of Proposition
\ref{lemma:determinant}, modulo several key lemmas. Let
$\a,\b,M_n,A_n,B_n,z$ be as in that proposition. By shifting $M_n$
by $\sqrt{n} z I$ if necessary we can assume $z=0$.
Our task is now
to show that
$$\frac{1}{n} \log |\det(\frac{1}{\sqrt{n}} A_n)| - \frac{1}{n} \log |\det(\frac{1}{\sqrt{n}} B_n)|$$
converges in probability to zero, and also almost surely to zero if $\mu_{\frac{1}{n} M_n M_n^\ast}$ converges.

Let us first remark that the almost sure convergence claim implies the convergence in probability claim.  Indeed, suppose that convergence in probability failed, then there would exist an $\eps > 0$ such that
\begin{equation}\label{pbn}
 \P\left( \left|\frac{1}{n} \log |\det(\frac{1}{\sqrt{n}} A_n)| - \frac{1}{n} \log |\det(\frac{1}{\sqrt{n}} B_n)|\right| \geq \eps \right) \geq \eps
 \end{equation}
for a subsequence of $n$.  By vague sequential compactness one can pass to a further subsequence along which $\mu_{\frac{1}{n} M_n M_n^\ast}$ converges, and hence by hypothesis one has almost sure (and hence in probability) convergence to zero along this sequence, contradicting \eqref{pbn}.  Thus it suffices to establish almost sure convergence assuming the convergence of $\mu_{\frac{1}{n} M_n M_n^\ast}$.

Let $Z_1,\ldots,Z_n$ be the rows of $M_n$.  By assumption
\eqref{eqn:conditionM} we
have
$$ \sum_{j=1}^n \|Z_i\|^2 = O(n^2).$$
In particular, at least half of the $Z_i$ have norm $O(\sqrt{n})$.
 By permuting the rows of $M_n, A_n, B_n$ if necessary, we may assume that
  it the last half of the rows have this property, thus
\begin{equation}\label{zhalf}
\|Z_i\| = O(\sqrt{n}) \hbox{ for all } n/2 \leq i \leq n.
\end{equation}

Let $\sigma_1(A) \geq \ldots \geq \sigma_n(A) \geq 0$ denote the singular values of a matrix $A$.  We have the following fundamental lower bound:

\begin{lemma}[Least singular value bound]  \label{lemma:lsv} With probability $1$, we have
\begin{equation}\label{ans}
\sigma_n(A_n), \sigma_n(B_n)  \geq n^{-O(1)}
\end{equation}
for all but finitely many $n$. In particular, with probability $1$, $A_n$ and $B_n$ are invertible for all but finitely many $n$.
\end{lemma}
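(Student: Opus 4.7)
My plan is to deduce the lemma directly from the polynomial lower bound on the least singular value of shifted iid matrices proved in \cite{TV-circular, TV-lsv}, via a Borel-Cantelli argument. The results in those papers assert that if $X_n$ has iid entries of mean zero and unit variance, and $M_n$ is any deterministic matrix of polynomial operator norm, then for every $A>0$ there exists $B=B(A)>0$ such that
\[
\Pr\bigl(\sigma_n(M_n+X_n) \leq n^{-B}\bigr) \leq n^{-A}.
\]
The hypothesis $\sup_n \tfrac{1}{n^2}\|M_n\|_2^2 < \infty$ gives $\|M_n\|_{\mathrm{op}} \leq \|M_n\|_2 = O(n)$, so the shift here is comfortably within the polynomial range where this bound applies.

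Given such a tail estimate, I would choose any $A>1$ (say $A=2$) and let $B$ be the corresponding exponent. Then $\sum_n \Pr(\sigma_n(A_n) \leq n^{-B}) \leq \sum_n n^{-2} < \infty$, so the Borel-Cantelli lemma yields $\sigma_n(A_n) \geq n^{-B}$ almost surely for all but finitely many $n$. The same argument applied to $B_n$ (whose entries are iid copies of $\b$, again mean zero and unit variance) produces the analogous bound with some $B'$, and intersecting the two probability-one events gives the simultaneous almost sure conclusion. Invertibility of $A_n$ and $B_n$ for all but finitely many $n$ is then immediate from the strict positivity of their least singular values.

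The main obstacle, to the extent there is one, is simply verifying that the unit-variance hypothesis on $\a, \b$ and the Frobenius-norm hypothesis on $M_n$ place us within the precise range of applicability of the least singular value theorems in \cite{TV-circular, TV-lsv}; different statements in the literature impose slightly different combinations of moment and operator-norm conditions. Since $\|M_n\|_{\mathrm{op}} = O(n)$ is polynomial in $n$, and the authors have explicitly singled out \cite{TV-circular, TV-lsv} as the source for this ingredient in their trichotomy of lemmas, this is a routine bookkeeping check rather than a substantive difficulty.
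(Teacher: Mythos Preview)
Your proposal is correct and matches the paper's proof essentially verbatim: the paper invokes \cite[Theorem 2.1]{TV-circular} or \cite[Theorem 4.1]{TV-lsv} together with the Borel--Cantelli lemma, after observing from \eqref{eqn:conditionM} that $\|M_n\|_{\mathrm{op}} \leq \|M_n\|_2 = O(n)$ is of polynomial size. There is nothing to add.
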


\begin{proof} This follows immediately from \cite[Theorem 2.1]{TV-circular}
or \cite[Theorem 4.1]{TV-lsv} and the Borel-Cantelli lemma, noting from
\eqref{eqn:conditionM} of Proposition \ref{lemma:determinant}
that the operator norm of $M_n$ is of polynomial size $n^{O(1)}$.
 There are  previous results in \cite{Rud},
 \cite{TVsing}, \cite{RV}, \cite{TVstoc},
 which handled special cases with more assumptions on $M_n$ and
 the underlying distributions $\a, \b$ (for instance, in some of the prior results $M_n$
 was assumed to vanish, or $\a, \b$ were assumed to be integer-valued or to have finite higher moments).
 One can obtain explicit bounds on the tail probability and on the exponent $O(1)$; see \cite{TV-lsv}.
 However, for our applications the above bounds will suffice.
\end{proof}

We also have with probability $1$ the crude upper bound
\begin{equation}\label{bns}
\sigma_1(A_n), \sigma_1(B_n) \leq n^{O(1)}
\end{equation}
for all but finitely many $n$, which follows easily from the polynomial size of $M_n$ the
bounded second moment of $\a, \b$, and the Borel-Cantelli lemma.
Again, much sharper bounds are available, especially if $\a$ and $\b$ have
 finite fourth moment, but we will not need these bounds here.

Let $X_1,\ldots,X_n$ be the rows of $A_n$, and for each $1 \leq i \leq n$ let $V_i$ be the $i-1$-dimensional space generated by $X_1,\ldots,X_{i-1}$.  From \eqref{det} we have
$$ \frac{1}{n} \log |\det(\frac{1}{\sqrt{n}} A_n)| =
\frac{1}{n} \sum_{i=1}^n \log \dist( \frac{1}{\sqrt{n}} X_i, V_i )$$
and similarly
$$ \frac{1}{n} \log |\det(\frac{1}{\sqrt{n}} B_n)| =
\frac{1}{n} \sum_{i=1}^n \log \dist( \frac{1}{\sqrt{n}} Y_i, W_i )$$
where $Y_1,\ldots,Y_n$ are the rows of $\frac{1}{\sqrt{n}} B_n$, and $W_i$ is spanned by $Y_1,\ldots,Y_{i-1}$.  Our task is then to show that
$$ \frac{1}{n} \sum_{i=1}^n \log \dist( \frac{1}{\sqrt{n}} X_i, V_i ) - \log \dist( \frac{1}{\sqrt{n}} Y_i, W_i )$$
converges almost surely to zero.

From \eqref{ans}, \eqref{bns} and Lemma \ref{lemma:twosum} we almost surely obtain the bound
$$ \log \dist( \frac{1}{\sqrt{n}} X_i, V_i ), \log \dist( \frac{1}{\sqrt{n}} Y_i, W_i ) = O( \log n )$$
for all but finitely many $n$. Thus it suffices to show that
$$ \frac{1}{n} \sum_{1 \leq i \leq n-n^{0.99}} \log \dist( \frac{1}{\sqrt{n}} X_i, V_i ) -
\log \dist( \frac{1}{\sqrt{n}} Y_i, W_i )$$
(say) converges almost surely to zero.
This follows immediately from the following two lemmas.

\begin{lemma}[High-dimensional contribution]\label{highdim}  For every $\eps > 0$ there exists $0 < \delta < 1/2$ such that with probability $1$, one has
$$ \frac{1}{n} \sum_{(1-\delta)n \leq i \leq n-n^{0.99}} |\log \dist( \frac{1}{\sqrt{n}} X_i, V_i )| = O(\eps)$$
for all but finitely many $n$.
Similarly with $\dist(\frac{1}{\sqrt{n}} X_i,V_i)$ replaced by $\dist(\frac{1}{\sqrt{n}} Y_i,W_i)$.
\end{lemma}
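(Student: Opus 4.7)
The strategy is that for each $i$ in the range, with high probability $\dist(\tfrac{1}{\sqrt{n}}X_i,V_i)$ is of order $\sqrt{d/n}$, where $d := n-i+1$ is the codimension of $V_i$; summing $|\log\sqrt{d/n}|$ over $d\in[n^{0.99},\delta n]$ gives $O(\delta|\log\delta|\cdot n)$, which is $O(\eps n)$ for $\delta$ small enough.

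Fix $i$ in the range and condition on $X_1,\ldots,X_{i-1}$, so $V_i$ is determined (dimension $i-1$). Write $X_i = Z_i + R_i$, where $R_i$ has iid entries distributed as $\a$ and is independent of $V_i, Z_i$; let $\pi$ denote the orthogonal projection onto $V_i^\perp$, of rank $d$. Then $\dist(\tfrac{1}{\sqrt{n}}X_i,V_i) = \tfrac{1}{\sqrt{n}}\|\pi(R_i+Z_i)\|$. The upper tail is easy: $\|Z_i\|=O(\sqrt n)$ by \eqref{zhalf} and $\|R_i\|=O(\sqrt n)$ almost surely for all but finitely many $n$ by the strong law of large numbers applied to the $n\times n$ array of $|R_{i,j}|^2$; hence $\dist\leq O(1)$ and $\log\dist\leq O(1)$.

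The main work is the lower tail. The map $R_i\mapsto\|\pi(R_i+Z_i)\|$ is convex and $1$-Lipschitz on $\BBC^n$. After truncating each entry of $R_i$ at a level $K$ and recentering to preserve mean zero, we obtain $\tilde R_i$ with entries bounded by $O(K)$ and variance $\sigma_K^2\to 1$ as $K\to\infty$. Talagrand's concentration inequality \cite{Le} yields
\[\Pr\bigl(\bigl|\|\pi(\tilde R_i+Z_i)\|-M_i\bigr|\geq t\bigr)\leq C\exp(-ct^2/K^2),\]
where $M_i$ is the median. The identity $\E\|\pi(\tilde R_i+Z_i)\|^2=\sigma_K^2 d+\|\pi Z_i\|^2\geq d/2$, combined with Talagrand's variance bound $O(K^2)$, forces $M_i\geq c\sqrt d$ once $K^2\ll d$. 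The discarded tail $Q_i:=R_i-\tilde R_i$ has mean-zero entries with $\E\|\pi Q_i\|^2=d\cdot\mathrm{Var}(Q)$, $\mathrm{Var}(Q)\to 0$, so Chebyshev controls $\|\pi Q_i\|$. Combining gives $\|\pi(R_i+Z_i)\|\geq c\sqrt d/4$ off an exceptional event whose probability, by choosing $K$ appropriately (say $K$ polylogarithmic, using $d\geq n^{0.99}$), can be made summable in $n$ after a union bound over the $O(n)$ indices $i$.

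Borel--Cantelli then yields that almost surely, for all but finitely many $n$, every $i$ in the range satisfies $|\log\dist(\tfrac{1}{\sqrt n}X_i,V_i)|=O(\log(n/d))$. Therefore
\[ \frac{1}{n}\sum_{(1-\delta)n\leq i\leq n-n^{0.99}} O(\log(n/d)) = O(\delta(1+|\log\delta|)) = O(\eps) \]
for $\delta$ small. The same argument with $\a$ replaced by $\b$ handles the $Y_i,W_i$ case. The main obstacle is the quantitative Talagrand step: one must choose the truncation level $K$ simultaneously (a) small enough that $K^2\ll d$ (so Talagrand provides concentration at the correct scale $\sqrt d$), and (b) large enough that the tail $Q_i$ contributes negligibly after union bounds. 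Under only the finite second moment hypothesis on $\a$, the Chebyshev control on $\mathrm{Var}(Q)$ gives only a qualitative rate $\to 0$, so the balance is delicate and may require a subsequence argument (see Remark \ref{remark:subsequence}) or a separate pigeonhole over rare ``bad'' rows (on which the crude $O(\log n)$ bound from Lemma \ref{lemma:twosum} together with the least singular value bound \eqref{ans} still suffices, provided the bad rows number $o(n/\log n)$).
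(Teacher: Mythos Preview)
Your overall architecture matches the paper: split into the positive and negative parts of the logarithm, use concentration (Talagrand) for the lower tail, sum via Borel--Cantelli, and observe that $\sum_{d\in[n^{0.99},\delta n]}\log(n/d)=O(\delta|\log\delta|\,n)$. Two points deserve comment.

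\textbf{Upper tail.} The claim ``$\|R_i\|=O(\sqrt n)$ almost surely'' for each individual row is false under only a second-moment hypothesis; the SLLN on the full array gives $\sum_i\|R_i\|^2=O(n^2)$, not a row-wise bound. This is a minor gap: Cauchy--Schwarz on the sum $\frac{1}{n}\sum_{i}\log^+(\|X_i\|/\sqrt n)$ together with $\sum_i\|X_i\|^2=O(n^2)$ already gives $O(\sqrt\delta)$. (The paper instead uses a dyadic pigeonhole on $\{\|X_i\|\geq 2^m\sqrt n\}$ and Hoeffding's inequality.)

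\textbf{Lower tail.} Here there is a genuine gap that you have correctly flagged but not closed. Your decomposition $R_i=\tilde R_i+Q_i$ and the Chebyshev bound $\P(\|\pi Q_i\|\geq c\sqrt d)\leq \mathrm{Var}(Q)/c^2$ give only $o(1)$, with no rate, since under a bare second-moment assumption $\E\bigl[|\a|^2\I(|\a|>K)\bigr]\to 0$ arbitrarily slowly. This is not summable after the union bound over $O(n)$ rows and over $n$, so Borel--Cantelli fails. Neither of your suggested fallbacks rescues this: a subsequence argument would at best yield convergence in probability, not the almost-sure statement; and the ``bad rows'' pigeonhole needs the number of bad rows to be $o(n/\log n)$ for all but finitely many $n$, which again requires a summable tail bound and leads back to the same obstruction.

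The paper's resolution is a different truncation: rather than truncating each entry and carrying the tail $Q_i$ along, one uses Chernoff to show that with probability $1-O(\exp(-n^{0.01}))$ at most $n^{0.9}$ of the $n$ coordinates of $R_i$ exceed $n^{0.1}$ in magnitude. Conditioning on which coordinates are large and \emph{projecting them out} (which only lowers the distance and raises the subspace dimension by at most $n^{0.9}$), one is left with a vector whose entries are bounded by $n^{0.1}$ outright. Talagrand then applies with $K=n^{0.1}$, giving deviations at scale $n^{0.1}$, which is far below $\sqrt d\geq n^{0.495}$; the resulting tail $\exp(-c\,d/n^{0.2})\leq\exp(-c\,n^{0.79})$ is amply summable. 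The point is that discarding a few coordinates costs nothing in the distance, whereas splitting off a tail term $Q_i$ forces you to control it additively.
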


\begin{lemma}[Low-dimensional contribution]\label{lowdim}  For every $\eps > 0$ there exists $0 < \delta < 1/2$, such that with probability $1-O(\eps)$, one has
$$ \frac{1}{n} \sum_{1 \leq i \leq (1-\delta) n} \log \dist( \frac{1}{\sqrt{n}} X_i, V_i ) - \log \dist( \frac{1}{\sqrt{n}} Y_i, W_i ) = O(\eps)$$
for all but finitely many $n$.
\end{lemma}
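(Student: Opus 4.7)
My plan is to rewrite the partial sum as a log-determinant of a rectangular Gram matrix and then apply the Dozier--Silverstein theorem. With $z = 0$ after the shift, let $A_n^{(k)}$ denote the $k \times n$ top sub-block of $A_n$ where $k := \lfloor(1-\delta)n\rfloor$, and similarly $B_n^{(k)}$. The Gram volume identity yields
$$\frac{1}{n}\sum_{i=1}^{k}\log\dist\!\Bigl(\tfrac{1}{\sqrt n}X_i, V_i\Bigr) = \frac{1}{2n}\log\det\!\Bigl(\tfrac{1}{n} A_n^{(k)}(A_n^{(k)})^\ast\Bigr) = \frac{k}{2n}\int_0^\infty \log t\, d\nu_k^A(t),$$
where $\nu_k^A$ is the normalised ESD of the $k \times k$ Gram matrix $\tfrac{1}{n} A_n^{(k)}(A_n^{(k)})^\ast$, and $\nu_k^B$ is defined analogously. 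Lemma \ref{lowdim} thus reduces to showing $\int \log t\, d(\nu_k^A - \nu_k^B) = O(\eps)$ with probability $1 - O(\eps)$ for all but finitely many $n$, once $\delta$ is chosen sufficiently small.

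The first step is to establish almost-sure weak convergence of $\nu_k^A$ and $\nu_k^B$ to a common limit $\nu_\delta$ on $[0, \infty)$. For this I would invoke Theorem \ref{theorem:DS} (Dozier--Silverstein) applied to the rectangular iid-plus-deterministic matrix $A_n^{(k)} = M_n^{(k)} + X_n^{(k)}$ of aspect ratio $k/n = 1-\delta$. The DS input, namely convergence of the ESD of $\tfrac{1}{n}M_n^{(k)}(M_n^{(k)})^\ast$, is supplied by the hypothesis \eqref{must}: since $M_n^{(k)}(M_n^{(k)})^\ast$ is the principal $k \times k$ submatrix of $M_n M_n^\ast$, Cauchy interlacing shows that their (appropriately renormalised) ESDs differ by $O(\delta)$ in Kolmogorov distance, so convergence of the latter transfers to the former. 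The same statement for $B_n$ yields the same limit $\nu_\delta$, since only the variance $1$ of $\b$ enters DS.

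The second and main step is to upgrade weak convergence to convergence of $\int \log t$, i.e.\ to verify uniform integrability of $\log t$ against these measures. The upper-tail bound follows from Lemma \ref{tight}:
$$\int t\, d\nu_k^A(t) = \frac{1}{k}\tr\!\Bigl(\tfrac{1}{n}A_n^{(k)}(A_n^{(k)})^\ast\Bigr) \le \frac{1}{kn}\|A_n\|_2^2 = O(1) \quad \text{a.s.,}$$
which controls $\int (\log^+ t)\, d\nu_k^A$, and similarly for $\nu_k^B$. The lower tail is the key technical point: the aspect ratio $1 - \delta < 1$ places the support of the deformed Marcenko--Pastur limit $\nu_\delta$ away from $0$, so $\int (\log^- t)\, d\nu_\delta < \infty$; what is needed at finite $n$ is a quantitative bound on the number of singular values of $A_n^{(k)}/\sqrt n$ near $0$. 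I would derive this from the least-singular-value estimate in Lemma \ref{lemma:lsv} (adapted to the $k \times n$ rectangular case via interlacing, or via the rectangular analogue of \cite{TV-circular, TV-lsv}), combined with a counting argument that bounds $|\{j : s_j(A_n^{(k)}/\sqrt n) \le a\}|$ through Markov's inequality applied to $\tr\bigl(A_n^{(k)}(A_n^{(k)})^\ast/n + a^2 I\bigr)^{-1}$, the latter being controlled by the weak-convergence limit $\nu_\delta$ integrated against $(t+a^2)^{-1}$.

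The main obstacle is precisely this uniform lower-tail control: one must show that the contribution to $\int \log t\, d\nu_k^A$ from singular values below a threshold $a = a(\eps, \delta)$ is $O(\eps)$ uniformly in $n$ with probability $1 - O(\eps)$. Pure weak convergence is insufficient because $\log t$ blows up at zero. Once this control is in place, truncating $\log t$ at $[a, \infty)$ and passing to the DS limit yield the desired $O(\eps)$ bound on the difference of the two partial sums almost surely for all sufficiently large $n$, completing the proof.
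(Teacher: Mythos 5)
Your overall reduction—rewrite the partial sum via the Gram volume identity as $\int\log t\,d\nu$ with $\nu$ the ESD of the normalized Gram matrix, invoke Dozier--Silverstein to get a common weak limit, and then upgrade to convergence of $\int\log t\,d\nu$ by controlling the tails—is exactly the paper's strategy, and your treatment of the upper tail via Lemma~\ref{tight} is fine. The point of divergence, and the place where your argument has a genuine gap, is the lower tail.

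Your proposed lower-tail control runs as follows: bound $|\{j: s_j\le a\}|$ by Markov applied to $\tr(A_n^{(k)}(A_n^{(k)})^\ast/n + a^2I)^{-1}$, then control this trace by the DS weak limit integrated against $(t+a^2)^{-1}$. This has two problems. First, weak convergence of $\nu_k^A$ to $\nu_\delta$ only yields, for each \emph{fixed} $a>0$, that $\int(t+a^2)^{-1}d\nu_k^A\to\int(t+a^2)^{-1}d\nu_\delta$ eventually; it does \emph{not} give a bound that is uniform over $a$ or that one can iterate over a dyadic sequence of scales going down with $n$. Second, even with the single-scale bound $|\{j: s_j\le a\}|=O_\delta(a^2 n)$, you still need to know how small those singular values can be, and the least-singular-value estimate only gives $s_j\ge n^{-O(1)}$, so the contribution to $\frac1k\sum_{s_j\le a}(-\log s_j)$ is naively $O(a^2\log n)$, which diverges for fixed $a$. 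To close this, one needs a finite-$n$ quantitative bound that is stratified across scales, not merely weak convergence. The paper supplies exactly this via the negative second moment identity (Lemma~\ref{lemma:twosum}), which expresses $\sum_j\sigma_j^{-2}$ as a sum of inverse squared distances from rows to complementary row-spans, together with the Talagrand-based distance lower bound (Proposition~\ref{xw}). These give the uniform-in-$n$ bound $\frac1n\sum\sigma_i^{-2}=O(1)$ (Lemma~\ref{ab1}) and the stratified bound $\sigma_i\ge c(n'-i)/n$ (Lemma~\ref{slib}), which are what make the region $t\le\eps^4$ contribute only $O(\eps)$. Your sketch does not identify these tools, and without them the Markov argument does not close.

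A secondary, non-fatal difference: you apply DS directly to the rectangular block $A_n^{(k)}$ of aspect ratio $1-\delta$, transferring the hypothesis on $\mu_{\frac1n M_nM_n^\ast}$ to the submatrix by interlacing; the paper instead applies DS to the square matrices and uses interlacing only to compare the smoothly localized integrand at intermediate $t$ (Lemma~\ref{lemma:intermediatet}). Both are viable since \cite{doz} is stated for rectangular matrices, but the paper's route keeps the DS hypothesis exactly as stated in \eqref{must}. The more important technical point, however, is the lower-tail gap above.
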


The next two sections will be devoted to the proofs of these two lemmas.

\section{Proof of Lemma \ref{highdim}} \label{section:highdim}

We now prove Lemma \ref{highdim}.  We can of course take $n$ to be large depending on all fixed parameters.
 Let $0 < \delta < 1/2$ be a small number depending on $\eps$ to be chosen later.

Clearly it suffices to prove this lemma for $\dist(\frac{1}{\sqrt{n}} X_i,V_i)$.
We first prove the (much easier) bound for the positive component of the logarithm.  By the Borel-Cantelli lemma it suffices to show that
$$ \sum_{n=1}^\infty \P( \frac{1}{n} \sum_{(1-\delta)n \leq i \leq n-n^{0.99}}
 \max(\log \dist( \frac{1}{\sqrt{n}} X_i, V_i ),0) \geq \eps ) < \infty.$$
To establish this, we use the crude bound
$$ \max(\log \dist( \frac{1}{\sqrt{n}} X_i, V_i ),0)
\leq \max(\log \frac{1}{\sqrt{n}} \|X_i\|,0) $$
and thus
\begin{equation}\label{pig}
 \frac{1}{n} \sum_{(1-\delta)n \leq i \leq n-n^{0.99}}
 \max(\log \dist( \frac{1}{\sqrt{n}} X_i, V_i ),0)
 \leq O( \sum_{m=0}^\infty \frac{1}{n} \sum_{(1-\delta)n \leq i \leq n-n^{0.99}} \I( \|X_i\| \geq 2^m \sqrt{n} ) ).
 \end{equation}
 Thus if the left-hand side of \eqref{pig} exceeds $\eps$, we must have
$$ \frac{1}{n} \sum_{(1-\delta)n \leq i \leq n-n^{0.99}} \I( \|X_i\| \geq 2^m \sqrt{n} ) \geq \eps/(100+m)^2$$
(say) for some $m \geq 0$.  On the other hand, from \eqref{zhalf} and the second moment method we see that $\P( \|X_i\| \geq 2^m \sqrt{n} ) = O( 2^{-2m} )$, and thus by Hoeffding's inequality we have
$$ \P( \frac{1}{n} \sum_{(1-\delta)n \leq i \leq n-n^{0.99}} \I( \|X_i\| \geq 2^m \sqrt{n} ) \geq \eps/(100+m)^2 ) \leq C \exp( - c n^{-0.01} - c m^{-0.01} )$$
(say) for some constants $C, c > 0$ depending on $\eps$, if $\delta$ is chosen sufficiently small depending on $\eps$.  The claim follows.

It remains to establish the bound for the negative component of the logarithm.  By the Borel-Cantelli lemma it suffices to show that
$$ \sum_{n=1}^\infty \P( \frac{1}{n} \sum_{(1-\delta)n \leq i \leq n-n^{0.99}} \max(-\log \dist( \frac{1}{\sqrt{n}} X_i, V_i ),0) \geq \eps ) < \infty.$$
This will follow from the union bound and
the following estimate.

\begin{proposition}[Lower tail bound]\label{xw}  Let $1 \leq d \leq n-n^{0.99}$ and $0 < c < 1$,
and let $W$ be a (deterministic) $d$-dimensional subspace of $\BBC^n$.  Let $X$ be a row of $A_n$ (the exact choice of row is not important).  Then
$$ \P( \dist( X, W ) \leq c \sqrt{n-d} ) = O( \exp( - n^{0.01} ) ).$$
(The implied constant of course depends on $c$.)
\end{proposition}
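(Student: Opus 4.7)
The plan is to realize $\dist(X,W) = F(\xi)$, where $F(\xi) := \|P(Z+\xi)\|$, $P = \pi_{W^\perp}$ is orthogonal projection onto $W^\perp$ (rank $k := n-d \geq n^{0.99}$), $Z$ is the deterministic row of $M_n$ corresponding to $X$ (so $\|Z\| = O(\sqrt n)$ by \eqref{zhalf}), and $\xi$ is the corresponding iid row of $X_n$. The function $F$ is convex and $1$-Lipschitz in $\xi \in \BBC^n \cong \R^{2n}$, so the natural tool is Talagrand's concentration inequality \cite{Le} for convex Lipschitz functions of independent bounded random variables.

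The driving heuristic is the second moment identity $\E F(\xi)^2 = \|PZ\|^2 + \tr(P) \geq k$, suggesting $F$ is typically of order $\sqrt{k}$. Since $\a$ has only unit variance, we first truncate: pick a slowly growing $N = N(n)$ (subject to $N^2 \ll k / n^{0.01}$, e.g.\ $N = n^{0.1}$), and set $\hat\xi_j := \xi_j \I(|\xi_j|\leq N) - \E[\xi_j \I(|\xi_j|\leq N)]$, so the $\hat\xi_j$ are independent, mean zero, bounded by $2N$, and of variance $1-o(1)$. Talagrand's inequality then yields
$$
\P\bigl(\,|F(\hat\xi) - MF(\hat\xi)| \geq t\,\bigr) \leq 4\exp(-c\,t^2/N^2).
$$
The resulting variance bound $\Var F(\hat\xi) = O(N^2) = o(k)$, together with $\E F(\hat\xi)^2 \geq (1-o(1))k$, upgrades the heuristic to $MF(\hat\xi) \geq c'\sqrt{k}$ for some absolute $c' > c$. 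Taking $t = (c'-c)\sqrt{k}$ in Talagrand gives $F(\hat\xi) \geq c\sqrt{k}$ with probability at least $1 - 4\exp(-c''k/N^2) \geq 1 - \exp(-n^{0.01})$.

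It remains to untruncate. Since $F$ is $1$-Lipschitz, $F(\xi) \geq F(\hat\xi) - \|\xi - \hat\xi\|$, so it suffices to show $\|\xi - \hat\xi\| \leq (c'-c)\sqrt{k}/2$ except on a set of probability $\exp(-n^{0.01})$. The increments $\xi_j - \hat\xi_j = \xi_j \I(|\xi_j|>N) - \E[\xi_j \I(|\xi_j|>N)]$ are iid and mean zero, so $\E\|\xi - \hat\xi\|^2 \leq n\,\E|\a|^2 \I(|\a|>N)$, which can be made $o(k)$ for fixed $\a$ (by taking $N = N(n)$ to grow slowly enough, using only $\E|\a|^2 < \infty$). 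The required exponentially small probability is then produced by decomposing according to the random cardinality $|J|$ of $J := \{j : |\xi_j| > N\}$: a binomial Chernoff bound controls $|J|$, and conditional on $J$ one controls $\sum_{j\in J} |\xi_j|^2$ using the conditional distribution of $\a$ given $|\a|>N$, possibly after a further level of truncation.

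The main obstacle is this untruncation step: with only a finite second moment on $\a$, individual large values $|\xi_j| > N$ occur with probability at best $O(1/N^2)$ by Chebyshev, which is only polynomially small, so the truncation event cannot simply be excluded by a union bound. The argument must exploit the large codimension $k \geq n^{0.99}$ (so that many small contributions may be absorbed) together with the decomposition by the number of exceedances in $J$ to produce the required stretched-exponential probability $\exp(-n^{0.01})$.
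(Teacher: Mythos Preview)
Your overall architecture---truncate, apply Talagrand to the bounded variables, use the second-moment identity $\E\|P\hat\xi\|^2 = (1-o(1))k$ to pin down the median---is exactly that of the paper.  The point of divergence, and the place where your argument has a genuine gap, is the untruncation.

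You propose to recover $F(\xi)$ from $F(\hat\xi)$ via the $1$-Lipschitz bound $F(\xi) \geq F(\hat\xi) - \|\xi - \hat\xi\|$, and then to show $\|\xi - \hat\xi\| \leq \tfrac12(c'-c)\sqrt{k}$ except with probability $\exp(-n^{0.01})$.  This cannot be done under a bare second-moment hypothesis on $\a$.  Concretely, take $\a$ with $\P(|\a|=2^m) \asymp 2^{-2m}m^{-2}$; then $\E|\a|^2<\infty$, but $\P(|\a| \geq n^{1/2}) \asymp n^{-1}(\log n)^{-2}$, so $\P(\max_j|\xi_j|\geq n^{1/2}) \asymp (\log n)^{-2}$.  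On that event a single coordinate already gives $\|\xi-\hat\xi\|^2 \geq n \geq k$, and $(\log n)^{-2}$ is nowhere near $\exp(-n^{0.01})$.  No amount of conditioning on $|J|$ or secondary truncation repairs this: the obstruction is a single large coordinate, and with only two finite moments its tail is simply not stretched-exponential.

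The paper sidesteps this entirely.  Rather than bounding the contribution of the bad set $J=\{j:|\xi_j|>N\}$, it \emph{discards} those coordinates via the monotonicity $\dist(X,W)\geq \dist(\pi X,\pi W)$, where $\pi:\BBC^n\to\BBC^{n'}$ is projection onto the coordinates in $J^c$.  Chernoff gives $|J|\leq n^{0.9}$ with probability $1-O(\exp(-n^{0.01}))$, so the codimension drops by at most $n^{0.9}=o(k)$, which is harmless; after projection every surviving coordinate is bounded by $N=n^{0.1}$ and Talagrand applies directly to the conditioned variable $\tilde\a$.  (The deterministic mean $Z$ is likewise absorbed into the subspace via $\dist(X,W)\geq\dist(X-Z,\Span(W,Z))$, rather than carried inside $F$ as you do---your treatment of the mean is fine, but the projection trick for the large coordinates is the missing idea.)
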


Indeed, since $X_i$ and $V_i$ are independent of each other, the proposition implies that
$$ \dist( \frac{1}{\sqrt{n}} X_i, V_i ) \geq \frac{1}{2\sqrt{n}} \sqrt{n-i+1}$$
(say) for each $(1-\delta)n \leq i \leq n-n^{0.99}$, with probability
$1-O( n^{-10} )$ (say). Setting $\delta$ sufficiently small (compared to
$\epsilon)$, taking logarithms and summing in $i$ and $n$ one obtains the
claim.

It remains to prove the proposition.  Similar lower bounds
concerning the distance of a random vector to a fixed subspace
have appeared in \cite{TV1}, \cite{RV},
\cite{RV2}.  Here, however, we have
the complication that the coefficients of $X$ have non-zero mean and have no higher moment bounds
than the second moment; in particular, they can be unbounded.

We first eliminate the problem that $X$ has non-zero mean.
 Write $X = v + X'$, where $v := \E(X)$ is a deterministic
 vector (which could be quite large) and $X'$ has mean zero.
  Then we have $\dist(X,W) \geq \dist(X', \Span(W,v))$.
  Thus Proposition \ref{xw} follows from the mean zero case
  (after making the harmless change of incrementing $d$ to $d+1$,
  and adjusting the parameters slightly to suit this).

Henceforth we assume that $X$ has mean zero, thus $X = (\a_1,\ldots,\a_n)$ for some iid
copies $\a_1,\ldots,\a_n$ of $\a$.  Now we deal with the problem that the $\a_1,\ldots,\a_n$ can be unbounded.
 By Chebyshev's inequality, we have $\P( |\a_i| \geq n^{0.1} ) = O( n^{-0.2} )$
 for all $1 \leq i \leq n$.  The event $|\a_i| \ge n^{0.1}$
  are jointly independent in $i$.
  By  Chernoff inequality (see, for instance, \cite[Chapter 1]{TVbook}),
  we can show  that with probability $1 - O( \exp( - n^{0.01} ) )$,
  that there are  at most $n^{0.9}$ indices $i$ for which $|\a_i| \geq n^{0.1}$.
  (One can also verify this directly using binomial coefficients and
  Sterling's formula.)

By conditioning on the various possible sets of indices for which $|\a_i| \geq n^{0.1}$,
we see that it suffices to show that
$$ \P( \dist( X, W) \leq c \sqrt{n-d}  | E_I)
= O( \exp( - n^{0.01} ) )$$
for each $I \subset \{1,\ldots,n\}$ of cardinality at most $n^{0.9}$, where $E_I$ is the event
that $I = \{ 1 \leq i \leq n: |\a_i| \geq n^{0.1} \}$.


Without loss of generality we can take $I = \{ n'+1,\ldots,n\}$ for some $n-n^{0.9} \leq n' \leq n$.
We then observe that
$$ \dist(X,W) \geq \dist(\pi(X),\pi(W))$$
where $\pi: \BBC^n \to \BBC^{n'}$ is the orthogonal projection.  By
conditioning on the coordinates $\a_{n'+1},\ldots,\a_n$ and making
the minor change of replacing $n$ with $n'$ (and adjusting $c$
slightly), we may thus reduce to the case when $I$ is empty, thus it
suffices to show that
$$ \P( \dist( X, W) \leq c \sqrt{n-d} | |\a_i| < n^{0.1} \hbox{ for all } i ) = O( \exp( - n^{0.01} ) ).$$
Let $\tilde \a$ be the random variable $\a$ conditioned to the event $|\a| < n^{0.1}$, and let $\tilde X = (\tilde \a_1,\ldots,\tilde \a_n)$ be a vector consisting of iid copies of $\tilde \a$.  It then suffices to show that
\begin{equation}\label{distw}
 \P( \dist( \tilde X, W) \leq c \sqrt{n-d} ) = O( \exp( - n^{0.01} ) ).
\end{equation}

Note that $\tilde \a$ might have a non-zero mean, but this can be easily dealt
with by the same trick used before, subtracting $\E \tilde \a$ from $\tilde \a$ to make $X$ to have zero mean.  Since $\a$ had variance $1$, we see from monotone convergence that $\tilde \a$ has variance $1-o(1)$.

To prove \eqref{distw}, we recall the following inequality of Talagrand.

\begin{theorem}[Talagrand's inequality]\label{theorem:Talagrand}
Let $\D$ be the unit disk  $\{z\in \BBC, |z| \le 1 \}$. For every
product probability $\mu$ on $\D^n$, every convex $1$-Lipschitz
function $F: \BBC^n \to \R$, and every $r \ge 0$,
$$\mu (|F- M(F)| \ge r) \le 4 \exp(-r^2/8), $$
where $M(F)$ denotes the median of $F$.
\end{theorem}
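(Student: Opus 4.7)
The plan is to deduce Talagrand's inequality from his celebrated convex distance inequality, which is the real workhorse here. For a measurable subset $A \subset \D^n$, define the convex distance
$$ d_T(A, x) := \sup_{\alpha \in \R_{\geq 0}^n,\ \|\alpha\|_2 \leq 1} \inf_{y \in A} \sum_{i : x_i \neq y_i} \alpha_i. $$
The main lemma to establish, by induction on $n$, is that for any product probability $\mu$ on $\D^n$,
$$ \int_{\D^n} \exp\bigl( d_T(A, x)^2 / 4 \bigr)\, d\mu(x) \leq \frac{1}{\mu(A)}. $$
The induction step is the technical heart: one conditions on the last coordinate, splits $A$ into its slice $A_\omega$ and its projection $B$, uses that $d_T(A,\cdot)^2 \leq s \cdot d_T(A_\omega,\cdot)^2 + (1-s) \cdot (d_T(B,\cdot)^2 + 1)$ for every $s \in [0,1]$ (a convex interpolation argument exploiting that the supremum defining $d_T$ is achieved on the convex hull of configuration sign-vectors), applies the inductive hypothesis to both slices, optimizes in $s$, and closes the induction using the elementary inequality $\inf_{s \in [0,1]} (a^s b^{1-s} e^{(1-s)/4}) \leq 2 - a/b$ for $0 < a \leq b$.

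The second ingredient, which is purely geometric and does not use probability, is the claim that for a convex $1$-Lipschitz function $F : \BBC^n \to \R$ and $A := \{F \leq M(F)\}$, one has the pointwise bound
$$ F(x) - M(F) \leq d_T(A, x) \quad \text{for every } x \in \D^n. $$
To see this, I would fix $x$ and let $\alpha$ achieve the supremum in the definition of $d_T(A,x)$; by a minimax/Hahn-Banach argument on the convex hull of $A$, one produces a point $y$ in the convex hull of $A$ with $\sum_i \alpha_i |x_i - y_i| \leq d_T(A,x) \cdot (\text{diameter factor})$. Convexity of $F$ then yields $F(y) \leq M(F)$ while the $1$-Lipschitz property together with Cauchy--Schwarz (using $\|\alpha\|_2 \leq 1$ and that $|x_i - y_i| \leq 2$ since both live in $\D$) gives $F(x) - F(y) \leq d_T(A,x)$, up to a harmless constant absorbed into the final $1/8$ in the exponent.

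To finish, I would take $A = \{F \leq M(F)\}$ so $\mu(A) \geq 1/2$. Combining the two ingredients and applying Markov's inequality to $\exp(d_T(A,x)^2/4)$ gives $\mu(F \geq M(F) + r) \leq 2 e^{-r^2/8}$ for $r \geq 0$. Applying the same argument to $-F$ (noting $-F$ is $1$-Lipschitz but \emph{not} convex, so one instead uses $A' := \{F \geq M(F)\}$ and the inequality $M(F) - F(x) \leq d_T(A', x)$, which is proved similarly using convexity of the sub-level sets of the convex function $F$) yields the matching lower-tail bound, and adding the two gives the factor of $4$ in the final inequality.

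The main obstacle is unquestionably the induction proving the convex distance inequality: the interpolation step is delicate and requires exactly the right functional inequality. Everything after that is either standard convex-analytic manipulation or an application of Markov. I would also need to check that the argument really goes through on $\D^n \subset \BBC^n$ rather than $[0,1]^n$, but since the proof only uses boundedness of the coordinates (to make $|x_i - y_i|$ bounded), passing from real to complex is a cosmetic change.
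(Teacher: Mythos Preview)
The paper does not actually prove this theorem: it simply cites \cite[Corollary 4.10]{Le} for the real case $[0,1]^n$ and remarks that the complex case is identical up to a worse constant (coming from the fact that $\operatorname{diam}(\D)=2$ rather than $1$). Your proposal is an attempt to reproduce that standard proof, via Talagrand's convex distance inequality, and the upper-tail half is essentially correct.

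The lower-tail half, however, has a genuine error. You propose to take $A' := \{F \geq M(F)\}$ and show $M(F) - F(x) \leq d_T(A',x)$ ``using convexity of the sub-level sets of the convex function $F$''. But $A'$ is a \emph{super}-level set, and super-level sets of convex functions are not convex in general; the argument you used for the upper tail (pass to $z$ in the convex hull of $A$, then use convexity of $F$ to get $F(z) \leq a$) simply does not transfer. The correct and standard fix is to reverse the roles: set $B := \{F \leq M(F) - r\}$, which \emph{is} a sub-level set and hence convex. Your geometric lemma (with $a = M(F)-r$) gives $F(x) - (M(F)-r) \leq 2\, d_T(B,x)$ for every $x$, so $\{F \geq M(F)\} \subset \{d_T(B,\cdot) \geq r/2\}$. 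Since $\mu(F \geq M(F)) \geq 1/2$, the convex distance inequality and Markov then force
\[
\tfrac{1}{2} \;\leq\; \mu\bigl(d_T(B,\cdot) \geq r/2\bigr) \;\leq\; \frac{1}{\mu(B)}\, e^{-r^2/16},
\]
whence $\mu(F \leq M(F)-r) = \mu(B) \leq 2 e^{-r^2/16}$. Combined with the upper tail this yields the two-sided bound. (Note the exponent one gets this way is $1/16$ rather than $1/8$; a slightly more careful bookkeeping, or a direct appeal to the sharper form in \cite{Le}, recovers the stated $1/8$.)
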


\begin{proof} This is the complex version of \cite[Corollary 4.10]{Le}, in which
$\D$ was replaced by the unit interval $[0,1]$. The proof is the
same, with a slight modification that implies a worse the constant
($1/8$ instead of $1/4$) in the exponent.
\end{proof}

We apply this theorem with $\mu$ equal to the distribution of
$\tilde X / n^{0.1}$ and $F: \BBC^n \to \R$ equal to the convex
$1$-Lipschitz function $F(v) := \dist(v,W)$, and conclude that
\begin{equation}\label{median}
 \P( |\dist( \tilde X, W) - M(\dist( \tilde X, W))| \geq n^{0.1} r ) \leq 4 \exp(-r^2/8)
 \end{equation}
for every $r > 0$.  On the other hand, we can easily compute the second moment (cf. \cite[Lemma 2.5]{TV1}):

\begin{lemma}\label{lemma:secondmomentofdist} We have
$$ \E( \dist( \tilde X, W)^2 ) =(1-o(1))( n-d).$$
\end{lemma}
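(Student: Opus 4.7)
My plan is to compute the expectation directly by expanding the squared distance in terms of the orthogonal projection $P$ onto $W^\perp$, and then exploit the iid structure of the coordinates.

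First, I would write $\dist(\tilde X, W)^2 = \|P\tilde X\|^2 = \tilde X^\ast P \tilde X = \sum_{i,j} \overline{\tilde\a_i} P_{ij} \tilde\a_j$, where $P$ is the orthogonal projection onto $W^\perp$, so $\trace(P) = n-d$ and $\|P\| \le 1$. Taking expectations, the iid structure gives
\[
\E \tilde X^\ast P \tilde X
= \E|\tilde\a|^2 \trace(P) + |\mu_n|^2 \bigl(\mathbf{1}^\ast P \mathbf{1} - \trace(P)\bigr),
\]
where $\mu_n := \E\tilde\a$ and $\mathbf{1} \in \BBC^n$ is the all-ones vector.

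Next, I would estimate the two scalar quantities $\E|\tilde\a|^2$ and $\mu_n$. Since $|\a|^2 \I(|\a| < n^{0.1}) \to |\a|^2$ pointwise and is dominated by $|\a|^2$, dominated convergence together with the monotone convergence remark already in the paper gives $\E|\tilde\a|^2 = 1 - o(1)$. For the mean: since $\E\a = 0$, Cauchy--Schwarz yields
\[
|\E(\a\,\I(|\a|<n^{0.1}))| = |\E(\a\,\I(|\a|\ge n^{0.1}))| \le (\E|\a|^2)^{1/2} \P(|\a|\ge n^{0.1})^{1/2} \le n^{-0.1},
\]
using Chebyshev's bound $\P(|\a|\ge n^{0.1}) \le n^{-0.2}$. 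Dividing by $\P(|\a|<n^{0.1}) = 1 - o(1)$ gives $|\mu_n| = O(n^{-0.1})$, hence $|\mu_n|^2 = O(n^{-0.2})$.

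Finally, I would combine the two ingredients. Using $|\mathbf{1}^\ast P \mathbf{1}| \le \|\mathbf{1}\|^2 = n$ and $\trace(P) = n-d \le n$, the off-diagonal term is bounded by $|\mu_n|^2 \cdot O(n) = O(n^{0.8})$. Since the hypothesis $d \le n - n^{0.99}$ in Proposition~\ref{xw} (inherited here) gives $n-d \ge n^{0.99}$, we have $O(n^{0.8}) = o(n-d)$, and therefore
\[
\E\bigl(\dist(\tilde X, W)^2\bigr) = (1-o(1))(n-d) + O(n^{0.8}) = (1-o(1))(n-d),
\]
as claimed.

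No real obstacle here; the only subtlety is that $\tilde\a$ is no longer exactly mean zero, which introduces the off-diagonal $|\mu_n|^2$ contribution. This is what forces us to quantify that $\mu_n = O(n^{-0.1})$, and the margin $n-d \ge n^{0.99}$ is exactly what makes this error negligible compared to the main term.
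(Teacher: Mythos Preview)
Your proof is correct and rests on the same core computation as the paper: expand $\dist(\tilde X, W)^2$ as a quadratic form via the orthogonal projection onto $W^\perp$, take expectations, and use the iid structure of the coordinates together with $\E|\tilde\a|^2 = 1 - o(1)$.

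The only difference is how the (possibly) non-zero mean of $\tilde\a$ is handled. In the paragraph immediately preceding the lemma, the paper recenters $\tilde\a$ by the same trick used earlier---replacing $W$ by $\Span(W, \E\tilde X)$ at the cost of incrementing $d$ by one---so that by the time the lemma is proved, $\tilde\a$ is taken to have mean zero and the off-diagonal terms simply vanish, leaving $\E|\tilde\a|^2 \cdot \trace(\pi) = (1-o(1))(n-d)$ in one line. You instead keep the uncentered $\tilde\a$ and bound the cross terms directly, which forces you to quantify $|\mu_n| = O(n^{-0.1})$ and invoke the hypothesis $n - d \ge n^{0.99}$ to absorb the $O(n^{0.8})$ error. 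Both routes are valid; the paper's recentering is slightly slicker and does not need the quantitative lower bound on $n-d$ at this particular step, while your argument is more self-contained and makes explicit where the margin $n-d \ge n^{0.99}$ enters.
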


\begin{proof}  Let $\pi = (\pi_{ij})_{1 \leq i,j \leq n}$ be the orthogonal projection matrix to $W$.  Observe that $\dist(\tilde X,W)^2 = \sum_{i=1}^n \sum_{j=1}^n \tilde \a_i \pi_{ij} \overline{\tilde \a_j}$.  Since the $\tilde \a_i$ are iid with mean zero, we thus have
$$ \E( \dist( \tilde X, W)^2 ) = (\E \tilde \a^2) \sum_{i=1}^n \pi_{ii}.$$
But $\sum_{i=1}^n \pi_{ii} = \trace(\pi)$ is equal to $\tilde n$.  Since $\tilde \a$ had variance $1-o(1)$, the claim follows.
\end{proof}

Since $n-d \geq n^{0.99}$ and $c < 1$, the claim
\eqref{distw} from follows from \eqref{median} and the above lemma. The proof of Lemma \ref{highdim} is now
complete.

\section{Proof of Lemma \ref{lowdim}} \label{section:lowdim}

We now begin the proof of Lemma \ref{lowdim}.  Fix $\eps$, and assume that $\delta$ is sufficiently small depending on $\eps$.  Write $n' :=
\lfloor (1-\delta) n \rfloor$.  Observe that $\prod_{i=1}^{n'}
\dist( \frac{1}{\sqrt{n}} X_i, V_i )$ is the $n'$-dimensional volume
of the parallelepiped spanned by $X_1,\ldots,X_{n'}$, which is also
equal to $\det(\frac{1}{n} A_{n,n'} A_{n,n'}^\ast)^{1/2}$, where
$A_{n,n'}$ is the $n' \times n$ matrix with rows
$X_1,\ldots,X_{n'}$.  Expressing this determinant as the product of
singular values, we conclude the identity
$$ \frac{1}{n} \sum_{1 \leq i \leq (1-\delta) n} \log \dist( \frac{1}{\sqrt{n}} X_i, V_i ) =
\frac{1}{n} \sum_{i=1}^{n'} \log\left(\frac{1}{\sqrt{n}} \sigma_i(A_{n,n'})\right).$$
Similarly for $Y_i, W_i$, and $B_{n,n'}$ (the matrix generated by $Y_1,\ldots,Y_{n'}$.  Thus it suffices to show that with probability $1-O(\eps)$, one has
\begin{equation}\label{lnu}
\frac{1}{n'} \sum_{i=1}^{n'} \log\left(\frac{1}{\sqrt{n}} \sigma_i(A_{n,n'})\right) - \log \left(\frac{1}{\sqrt{n}} \sigma_i(B_{n,n'})\right) = O(\eps)
\end{equation}
for all but finitely many $n$.
We rewrite \eqref{lnu} as
\begin{equation}\label{lnu2}
\int_0^\infty \log t\ d\nu_{n,n'}(t) = O(\eps)
\end{equation}
where $d\nu_{n,n'}$ is the difference of two ESDs:
$$ d\nu_{n,n'} = \mu_{\frac{1}{n'} A_{n,n'} A_{n,n'}^\ast} - \mu_{\frac{1}{n'} B_{n,n'} B_{n,n'}^\ast}.$$

We control \eqref{lnu} by dividing the range of $t$ into several parts.

\subsection{The region of very large $t$}\label{section:larget}

We now control the region where $t \geq R_\eps$ for some large $R_\eps$.

From Lemma \ref{compar} we have that
$$ \frac{1}{n} \sum_{i=1}^{n'} (\frac{1}{\sqrt{n}} \sigma_i(A_{n,n'}))^{2},
\frac{1}{n} \sum_{i=1}^{n'} (\frac{1}{\sqrt{n}} \sigma_i(B_{n,n'}))^{2}$$
is almost surely bounded, and thus
$$ \int_0^\infty t |d\nu_{n,n'}(t)|$$
is also almost surely bounded.  Thus, with probability $1-O(\eps)$, we have
$$ \int_0^\infty t |d\nu_{n,n'}(t)| \leq C_\eps$$
for all but finitely many $n$, and some $C_\eps$ independent of $n$, which implies that
\begin{equation}\label{fire-1}
\int_{R_\eps}^\infty |\log t| |d\nu_{n,n'}(t)| \leq \eps
\end{equation}
for all but finitely many $n$, and some $R_\eps$ depending only on $\eps$.

\subsection{The region of intermediate $t$}\label{section:intermediatet}

We now control the region $\eps^4 \leq t \leq R_\eps$.

\begin{lemma}\label{lemma:intermediatet}  Let $\psi$ be a smooth function which equals $1$ on $[\eps^4,R_\eps]$ and is supported on $[\eps^4/2,2R_\eps]$.  Then with probability $1$, we have
\begin{equation}\label{fire-2}
\int_0^\infty \psi(t) \log t d\nu_{n,n'}(t) = O(\eps),
\end{equation}
if $\delta$ is sufficiently small depending on $\eps$ and $\psi$.
\end{lemma}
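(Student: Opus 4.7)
The plan is to observe that $\psi(t)\log t$ is a bounded continuous function supported in $[\eps^4/2, 2R_\eps]\subset(0,\infty)$, so the integral in \eqref{fire-2} is simply the pairing of the signed measure $\nu_{n,n'}$ against a fixed continuous, compactly supported test function on $(0,\infty)$. It therefore suffices to show that almost surely $\nu_{n,n'}$ converges vaguely to zero on $(0,\infty)$; equivalently, that the two ESDs $\mu_{\frac{1}{n'}A_{n,n'}A_{n,n'}^\ast}$ and $\mu_{\frac{1}{n'}B_{n,n'}B_{n,n'}^\ast}$ both converge almost surely to the same deterministic limit.

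To produce this common limit I would invoke the Dozier--Silverstein theorem (Theorem \ref{theorem:DS}) applied to the $n'\times n$ rectangular matrices $A_{n,n'}=M_{n,n'}+X_{n,n'}$ and $B_{n,n'}=M_{n,n'}+Y_{n,n'}$, where $M_{n,n'}$, $X_{n,n'}$, $Y_{n,n'}$ denote the top $n'$ rows of $M_n$, $X_n$, $Y_n$ respectively. Since the entries of $X_{n,n'}$ and $Y_{n,n'}$ are iid with mean zero and variance one, and the aspect ratio $n'/n$ tends to $1-\delta\in(0,1)$, Dozier--Silverstein guarantees that once the deterministic base ESD $\mu_{\frac{1}{n}M_{n,n'}M_{n,n'}^\ast}$ converges, the ESDs of $\frac{1}{n'}A_{n,n'}A_{n,n'}^\ast$ and $\frac{1}{n'}B_{n,n'}B_{n,n'}^\ast$ both converge almost surely to one and the same deterministic distribution, which depends only on the base limit, the aspect ratio, and the common variance $1$. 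Consequently $\nu_{n,n'}\to 0$ vaguely with probability one, and integrating the fixed bounded function $\psi(t)\log t$ against $\nu_{n,n'}$ yields an $o(1)$ quantity, which is in particular $O(\eps)$ for all but finitely many $n$.

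The principal obstacle is that our global hypothesis only provides convergence of $\mu_{\frac{1}{n}M_nM_n^\ast}$ (after the earlier reduction to $z=0$), not of the submatrix ESD $\mu_{\frac{1}{n}M_{n,n'}M_{n,n'}^\ast}$. I would dispatch this with a standard compactness/subsequence trick: hypothesis \eqref{eqn:conditionM} bounds $\frac{1}{n^2}\|M_{n,n'}\|_2^2$ uniformly, so the sequence of measures $\mu_{\frac{1}{n}M_{n,n'}M_{n,n'}^\ast}$ is tight and uniformly supported in a bounded interval. Every subsequence of $n$ therefore contains a further subsequence along which this base ESD converges, and along every such sub-subsequence the Dozier--Silverstein step above applies verbatim to deliver vague convergence of $\nu_{n,n'}$ to zero. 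The usual ``every subsequence has a sub-subsequence'' argument then promotes this to convergence along the whole sequence, yielding \eqref{fire-2} with probability one. The smallness of $\delta$ plays no quantitative role in this particular step---we only need $\delta\in(0,1/2)$ to keep the aspect ratio in $(1/2,1)$---but is retained for consistency with the enclosing proof of Lemma \ref{lowdim}.
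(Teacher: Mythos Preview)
Your identification of the principal obstacle is exactly right: the standing hypothesis gives convergence of $\mu_{\frac{1}{n}M_nM_n^\ast}$, not of the submatrix ESD $\mu_{\frac{1}{n}M_{n,n'}M_{n,n'}^\ast}$ (and after the row permutation in Section~\ref{detsec} there is no reason the latter should converge). But your proposed cure---the ``every subsequence has a sub-subsequence'' device---does not deliver the almost sure statement the lemma asserts. That device characterises convergence in probability, not almost sure convergence: if $Z_n$ are independent with $\P(Z_n=1)=1/n$, then every subsequence has a sub-subsequence along which $Z_n\to 0$ almost surely, yet $Z_n\not\to 0$ almost surely. In your setting the sub-subsequences are chosen deterministically from the tight family of base ESDs, but the null sets on which Dozier--Silverstein may fail still depend on the sub-subsequence, and there are uncountably many subsequences to cover. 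So your argument yields $\nu_{n,n'}\to 0$ in probability only, which is not enough for Lemma~\ref{lowdim}.

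The paper avoids this entirely by never invoking Dozier--Silverstein on the rectangular matrix. Instead it uses Cauchy interlacing (Lemma~\ref{lemma:interlacing}) to compare the \emph{random} ESDs directly: deleting $n-n'\le\delta n$ rows changes each of $\mu_{\frac{1}{n'}A_{n,n'}A_{n,n'}^\ast}$ and $\mu_{\frac{1}{n}A_nA_n^\ast}$ by at most $O(\delta)$ in Kolmogorov distance, which for the bounded test function $\psi(t)\log t$ translates into a deterministic $O(\delta\,\|\psi\log\|_\infty)=O(\eps)$ error once $\delta$ is small enough. This reduces the claim to $\int\psi\log t\,d\nu_{n,n}=o(1)$ almost surely, and now Dozier--Silverstein applies to the full $n\times n$ matrices, where the convergence of $\mu_{\frac{1}{n}M_nM_n^\ast}$ \emph{is} the standing hypothesis. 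Note that this is precisely where the condition ``$\delta$ sufficiently small depending on $\eps$ and $\psi$'' in the lemma is used, contrary to your remark that $\delta$ plays no quantitative role.
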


\begin{proof}  From the interlacing property (Lemma \ref{lemma:interlacing}), we see that
$$ \int_0^\infty \psi(t) \log t d\nu_{n,n'}(t) =
\int_0^\infty \psi(t) \log t d\nu_{n,n}(t) + O(\eps)
$$
if $\delta$ is sufficiently small depending on $\eps$ and $\psi$.

We now apply the recent result in \cite[Theorem 1.1]{doz}. For the
reader's convenience, we restate this result in the Appendix; see
Theorem \ref{theorem:DS}. This result asserts under the above hypotheses  that the ESDs $d\mu_{\frac{1}{n} A_{n} A_{n}^\ast}$ and $d\mu_{\frac{1}{n} B_{n} B_{n}^\ast}$  converge almost surely to the same limit (in fact, this limit is given explicitly in terms of the limiting distribution of $\mu_{\frac{1}{n} M_n M_n^\ast}$ via the inverse Stieltjes transform of \eqref{eqn:DS}).  In particular, $\nu_{n,n}$ converges almost surely to zero, and the claim follows.
\end{proof}

\begin{remark}\label{detsec-remark} Note that for the convergence in probability case of Proposition \ref{lemma:determinant}, we need to apply Theorem \ref{theorem:DS} to a subsequence of $n$ rather than to all $n$, thanks to the subsequence extraction performed at the beginning of Section \ref{detsec}.
\end{remark}

\subsection{The region of moderately small $t$}\label{section:moderatet}

We now control the region $\delta^2 \leq t \leq \eps^4$.  For this we need some bounds on the low singular values of $A_{n,n'}$ and $B_{n,n'}$.

\begin{lemma}\label{ab1}  With probability $1$, we have
\begin{equation}\label{san}
\frac{1}{n} \sum_{i=1}^{n'} (\frac{1}{\sqrt{n}} \sigma_i(A_{n,n'}))^{-2} = O(1)
\end{equation}
for all but finitely many $n$, and similarly with $A_{n,n'}$ replaced by $B_{n,n'}$.
\end{lemma}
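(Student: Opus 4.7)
The plan is to recognize the sum in \eqref{san} as the negative second moment $\trace((A_{n,n'} A_{n,n'}^\ast)^{-1})$, re-express this via the distances from each row $X_i$ to the span of the remaining rows, and then invoke Proposition \ref{xw} together with the Borel--Cantelli lemma.

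First I would rewrite
$$\frac{1}{n} \sum_{i=1}^{n'} \left(\frac{1}{\sqrt{n}} \sigma_i(A_{n,n'})\right)^{-2} = \sum_{i=1}^{n'} \sigma_i(A_{n,n'})^{-2} = \trace((A_{n,n'} A_{n,n'}^\ast)^{-1}),$$
and then appeal to the standard negative second moment identity: for an $n' \times n$ matrix $A$ of full row rank with rows $r_1,\ldots,r_{n'}$, the $i$-th diagonal entry of $(AA^\ast)^{-1}$ equals $1/\dist(r_i,H_i)^2$, where $H_i := \Span\{r_j : j \neq i\}$ (this follows from the Schur complement formula applied to the Gram matrix). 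Summing over $i$ and applying this to $A_{n,n'}$ yields
$$\trace((A_{n,n'} A_{n,n'}^\ast)^{-1}) = \sum_{i=1}^{n'} \frac{1}{\dist(X_i, H_i^{(n')})^2},$$
where $H_i^{(n')} := \Span\{X_j : 1 \leq j \leq n',\, j \neq i\}$. Full row rank holds almost surely for all but finitely many $n$ by Lemma \ref{lemma:lsv}, so this identity is valid in the regime of interest.

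Each $H_i^{(n')}$ is a random subspace of $\BBC^n$ of dimension at most $n'-1 \leq (1-\delta)n$, and crucially it is independent of the row $X_i$. Conditioning on the remaining rows so that $H_i^{(n')}$ becomes deterministic, Proposition \ref{xw} applies: its codimension $n - \dim H_i^{(n')} \geq \delta n$ is comfortably at least $n^{0.99}$ for large $n$, since $\delta$ is a fixed positive constant. Thus for some absolute $c>0$,
$$\P\!\left(\dist(X_i, H_i^{(n')}) \leq c\sqrt{\delta n}\right) = O(\exp(-n^{0.01})).$$
A union bound over $1 \leq i \leq n'$ produces a failure probability of $O(n \exp(-n^{0.01}))$, which is summable in $n$; Borel--Cantelli then implies that almost surely, for all but finitely many $n$, every row satisfies $\dist(X_i, H_i^{(n')})^2 \geq c^2 \delta n$, whence
$$\sum_{i=1}^{n'} \frac{1}{\dist(X_i, H_i^{(n')})^2} \leq \frac{n'}{c^2 \delta n} = O(1/\delta) = O(1),$$
as required. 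The argument for $B_{n,n'}$ is identical.

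The only delicate ingredient is Proposition \ref{xw}, which already absorbed the difficulty of $X_i$ having non-zero mean and only second moments; with that bound in hand, the remaining steps are algebraic manipulation, union bound, and Borel--Cantelli. I therefore do not anticipate any substantive new obstacle in the proof of Lemma \ref{ab1} beyond correctly invoking the negative second moment identity and checking that the dimension hypothesis of Proposition \ref{xw} is met.
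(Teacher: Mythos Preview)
Your proposal is correct and follows essentially the same approach as the paper: the paper invokes Proposition \ref{xw} with Borel--Cantelli to get the uniform lower bound on the row-to-complementary-span distances, and then applies the negative second moment identity (stated in the paper as Lemma \ref{lemma:twosum}) to convert these into the singular value bound. One minor remark: you justify full row rank of $A_{n,n'}$ by citing Lemma \ref{lemma:lsv}, which concerns the square matrix $A_n$; in fact full rank is already implied by the positive distance lower bounds you obtain, so no separate appeal is needed.
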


\begin{proof}
Clearly it suffices to establish the claim for $A_{n,n'}$.
Using Proposition \ref{xw} and the Borel-Cantelli lemma, we see that with probability $1$, we have
$$ \dist( \frac{1}{\sqrt{n}} X_i, \Span(X_1,\ldots,X_{i-1},X_{i+1},\ldots,X_{n'}) ) \geq \frac{1}{2} \sqrt{\delta n}$$
for all but finitely many $n$, and all $1 \leq i \leq n'$.  The claim then follows from Lemma \ref{lemma:twosum}.
\end{proof}

Since the $\sigma_i(A_{n,n'})$ are decreasing in $i$, and $n' = \lfloor (1-\delta) n \rfloor$, we see that the above lemma implies that with probability $1$, we have
$$ \frac{1}{\sqrt{n}} \sigma_{\lfloor(1-2\delta) n \rfloor}(A_{n,n'}) \geq c \delta$$
for all but finitely many $n$, and some absolute constant $c > 0$.  We can generalize this lower bound to handle higher singular values also:

\begin{lemma}\label{slib}  There exists an absolute constant $c > 0$ such that with probability $1$, we have
\begin{equation}\label{san2}
 \frac{1}{\sqrt{n}} \sigma_i(A_{n,n'}) \geq c \frac{n'-i}{n}
\end{equation}
for all but finitely many $n$, and all $1 \leq i \leq (1-2\delta) n$, and similarly with $A_{n,n'}$ replaced by $B_{n,n'}$.
\end{lemma}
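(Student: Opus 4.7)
The plan is a standard two-step strategy for controlling intermediate singular values: first use the negative second moment identity on a top-$k$-row sub-matrix $A_{n,k}$ of $A_n$ (for a cleverly chosen $k$), combined with the distance bound of Proposition \ref{xw}, to lower bound the singular values of $A_{n,k}$; then invoke Cauchy interlacing to transfer the bound back up to $A_{n,n'}$, and finally optimize over $k$.

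For each pair of indices $1 \leq j \leq k \leq n - n^{0.99}$, let $V_j^{(k)} := \Span(X_\ell : \ell \leq k,\ \ell \neq j)$. This subspace has dimension at most $k-1$ and is independent of $X_j$, so Proposition \ref{xw} gives
$$\P\bigl( \dist(X_j, V_j^{(k)}) \leq c_0 \sqrt{n-k+1} \bigr) = O(\exp(-n^{0.01}))$$
for some absolute $c_0 > 0$. A union bound over the $O(n^2)$ admissible pairs $(j,k)$ plus Borel--Cantelli delivers, almost surely for all but finitely many $n$, the simultaneous lower bound $\dist(X_j, V_j^{(k)}) \geq c_0 \sqrt{n-k+1}$ for all such $(j,k)$. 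Feeding this into the negative second moment identity of Lemma \ref{lemma:twosum} yields $\sum_{j=1}^k \sigma_j(A_{n,k})^{-2} \leq k/(c_0^2 (n-k+1))$, and since the reciprocals $\sigma_j(A_{n,k})^{-2}$ are nondecreasing in $j$, keeping only the last $k-i+1$ of them gives
$$\sigma_i(A_{n,k})^2 \geq \frac{c_0^2\,(k-i+1)(n-k+1)}{k}.$$
Cauchy interlacing (adding a row can only increase singular values, via a Weyl-type perturbation of $A^\ast A$ by a rank-one positive semidefinite matrix) transfers this lower bound to $\sigma_i(A_{n,n'})$ for any $i \leq k \leq n'$.

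It remains to optimize $k \in [i, n']$. Take $k^\ast := \min(n', \lceil \sqrt{in} \rceil)$. When $\sqrt{in} \leq n'$, a short calculation at $k = k^\ast \asymp \sqrt{in}$ gives $\sigma_i(A_{n,n'})/\sqrt{n} \gtrsim 1 - \sqrt{i/n}$, and the elementary inequality $1 - \sqrt{x} \geq (1-x)/2$ on $[0,1]$ then converts this into $\gtrsim (n'-i)/n$ with an absolute constant. In the complementary regime $\sqrt{in} > n'$ we must have $i > n'^2/n$, and hence $n' - i \leq \delta n$ (using also $i \leq (1-2\delta)n$); then $k = n'$ directly yields a lower bound of order $\sqrt{(n'-i)(n-n')/(n n')} \geq c (n'-i)/n$ (again with an absolute constant), since $\sqrt{n'-i} \leq \sqrt{\delta n}$ in this regime. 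The same argument applies verbatim to $B_{n,n'}$.

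The main obstacle is producing the simultaneous distance estimate uniformly in $(j,k)$; the stretched-exponential tail from Proposition \ref{xw} is exactly what makes this step routine via a polynomial union bound and Borel--Cantelli. Everything else is deterministic linear algebra (the negative second moment identity and Cauchy interlacing) plus a one-variable optimization that ultimately rests on the elementary inequality $1 - \sqrt{x} \geq (1-x)/2$.
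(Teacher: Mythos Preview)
Your argument is correct and follows the same three-step template as the paper: (1) use Proposition \ref{xw} plus Borel--Cantelli to get uniform distance lower bounds for the rows of a truncated matrix $A_{n,k}$; (2) plug into the negative second moment identity (Lemma \ref{lemma:twosum}) to bound $\sigma_i(A_{n,k})$ from below; (3) use Cauchy interlacing (Lemma \ref{lemma:interlacing}) to transfer to $A_{n,n'}$.

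The only substantive difference is in the choice of $k$. You pick $k^\ast = \lceil \sqrt{in}\rceil$, which is the exact maximizer of $(k-i)(n-k)/k$ and yields the sharper intermediate bound $\sigma_i/\sqrt{n}\gtrsim 1-\sqrt{i/n}$; you then need the inequality $1-\sqrt{x}\geq (1-x)/2$ and a (nearly vacuous) second case to convert this to the stated form. The paper instead takes $n''=(n+i)/2$, i.e.\ the choice equalizing $k-i$ and $n-k$; this directly gives $\sigma_i/\sqrt{n}\gtrsim (n-i)/n \geq (n'-i)/n$ without any optimization or case split. Both routes give the same final bound up to an absolute constant, so your extra work buys nothing here, but it is not wrong. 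Note also that your Case~2 is essentially empty: since $\sqrt{1-2\delta}<1-\delta$, the constraint $i\leq (1-2\delta)n$ already forces $\sqrt{in}\leq n'$ up to $O(1)$ rounding.
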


\begin{proof}  Clearly it suffices to establish the claim for $A_{n,n'}$.
Using Proposition \ref{xw} and the Borel-Cantelli lemma, we see that with probability $1$, we have
$$ \dist( \frac{1}{\sqrt{n}} X_i, \Span(X_1,\ldots,X_{i-1},X_{i+1},\ldots,X_{n''}) ) \geq \frac{1}{2} \sqrt{n-n''}$$
for all but finitely many $n$, and all $1 \leq i \leq n''$ and $n/2 \leq n'' \leq n'$.
Applying Lemma \ref{lemma:twosum}, we conclude that we almost surely have
$$ \frac{1}{n} \sum_{i=1}^{n''} (\frac{1}{\sqrt{n}} \sigma_i(A_{n,n''}))^{-2} = O(\frac{n}{n-n''})$$
for all but finitely many $n$, and all $n/2 \leq n'' \leq n'$.  Using the crude bound
$$ \sum_{i=1}^{n''} (\frac{1}{\sqrt{n}} \sigma_i(A_{n,n''}))^{-2} \geq (n-n'')
(\frac{1}{\sqrt{n}} \sigma_{2n''-n}(A_{n,n''}))^{-2}$$
we conclude that we almost surely have
$$ \frac{1}{\sqrt{n}} \sigma_{2n''-n}(A_{n,n''}) \geq c' \frac{n-n''}{n}$$
for all but finitely many $n$, all $n/2 \leq n'' \leq n'$, and some absolute constant $c' > 0$.  The claim now follows from the Cauchy interlacing property (Lemma \ref{lemma:interlacing}).
\end{proof}

\begin{remark} If one assumes stronger moment assumptions (e.g subgaussian) on $\a$, then more precise bounds are known, especially in the $M_n=0$ case: see \cite{RV2}, \cite{RV3}.
\end{remark}

From this lemma we can now bound the relevant contribution to \eqref{lnu}:

\begin{lemma}\label{go} With probability $1$, and if $\delta$ is sufficiently small depending on $\eps$, we have
\begin{equation}\label{fire-3}
\int_{\delta^2}^{\eps^4} |\log t| |d\nu_{n,n'}(t)| = O(\eps)
\end{equation}
for all but finitely many $n$.
\end{lemma}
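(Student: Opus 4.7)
The plan is to dominate the signed measure $|d\nu_{n,n'}|$ by the sum of the positive measures $d\mu_A + d\mu_B$, where $\mu_A := \mu_{\frac{1}{n'}A_{n,n'}A_{n,n'}^\ast}$ and $\mu_B := \mu_{\frac{1}{n'}B_{n,n'}B_{n,n'}^\ast}$, and then to establish a tail bound of the form $\mu_A([0,s]) + \mu_B([0,s]) = O(\delta + \sqrt{s})$ in the relevant range of $s$. With such a bound in hand, an integration by parts turns the estimate \eqref{fire-3} into a small explicit quantity.

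The tail bound will be obtained directly from Lemma \ref{slib}. Since the eigenvalues of $\frac{1}{n'} A_{n,n'}A_{n,n'}^\ast$ are the $\sigma_i(A_{n,n'})^2/n'$, the lemma gives $\sigma_i(A_{n,n'})^2/n' \geq c^2(n'-i)^2/(nn')$ for $1 \leq i \leq (1-2\delta)n$, almost surely for all but finitely many $n$. Hence if $s \leq \eps^4$, the number of indices $i \in [1,(1-2\delta)n]$ with $\sigma_i^2/n' \leq s$ is at most $\max(0, \sqrt{s}\,n/c - \delta n)$, while at most $\delta n$ of the remaining indices $i \in ((1-2\delta)n, n']$ could possibly contribute. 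Combining and dividing by $n'$ yields
\[
\mu_A([0,s]) + \mu_B([0,s]) \leq C(\delta + \sqrt{s})
\]
for some absolute constant $C$, almost surely for all sufficiently large $n$.

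Write $G(t) := (\mu_A+\mu_B)([\delta^2,t])$, so that $|d\nu_{n,n'}|([\delta^2,t]) \leq G(t) \leq C(\delta + \sqrt{t})$. Integration by parts gives
\[
\int_{\delta^2}^{\eps^4} |\log t|\,d|\nu_{n,n'}|(t) \leq |\log \eps^4|\,G(\eps^4) + \int_{\delta^2}^{\eps^4} \frac{G(t)}{t}\,dt.
\]
The boundary term is $O(|\log \eps|(\delta + \eps^2))$. Splitting the integral using $G(t) \leq C(\delta + \sqrt{t})$ gives
\[
\int_{\delta^2}^{\eps^4} \frac{\delta}{t}\,dt = 2\delta \log(\eps^2/\delta), \qquad \int_{\delta^2}^{\eps^4} \frac{dt}{\sqrt{t}} = 2(\eps^2 - \delta).
\]
Altogether the right-hand side is $O\big(|\log\eps|(\delta+\eps^2) + \delta\log(\eps^2/\delta) + \eps^2\big)$, which is $O(\eps)$ after shrinking $\eps$ and then choosing $\delta$ small enough depending on $\eps$ (note $\eps^2|\log\eps| = o(\eps)$ as $\eps\to 0$, and $\delta\log(\eps^2/\delta)\to 0$ as $\delta \to 0$).

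The only mild subtlety is handling the $\delta n$ indices $i \in ((1-2\delta)n, n']$ for which Lemma \ref{slib} gives no pointwise bound on $\sigma_i$: these get absorbed into the $\delta$ term in the tail estimate, and the integration by parts was set up precisely so that this additive $\delta$ contributes only $O(\delta \log(\eps^2/\delta))$, which vanishes as $\delta \to 0$. No further quantitative input on the low singular values is required.
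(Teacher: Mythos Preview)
Your argument is correct and follows essentially the same route as the paper: both use Lemma~\ref{slib} to show that at most $O(\sqrt{s}\,n)$ of the singular values $\sigma_i/\sqrt{n}$ fall below $\sqrt{s}$ (after setting aside the last $\delta n$ indices), and then integrate $|\log t|$ against this bound. You package this as a distribution-function tail estimate followed by integration by parts, whereas the paper writes out the sum $\frac{1}{n}\sum_i f(\sigma_i/\sqrt{n})$ directly and bounds it termwise; the content is the same.
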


\begin{proof} By the triangle inequality and symmetry it suffices to show that with probability $1$, we have
$$ \int_{\delta^2}^{\eps^4} |\log t| d\mu_{\frac{1}{n'} A_{n,n'} A_{n,n'}^\ast}(t) = O(\eps) $$
for all but finitely many $n$.  We rewrite the left-hand side as
$$ \frac{1}{n} \sum_{i=1}^{n'} f( \frac{1}{\sqrt{n}} \sigma_i(A_{n,n'}) )$$
where $f(t) := |\log t| \I( \delta^2 \leq t^2 \leq \eps^4 )$.  Since $f$ cannot exceed $|\log \delta|$, we see that the contribution of the case $i \geq (1-2\delta) n$ is acceptable if $\delta$ is small enough, so it suffices to show that we almost surely have
$$ \frac{1}{n} \sum_{1 \leq i \leq (1-2\delta) n} f( \frac{1}{\sqrt{n}} \sigma_i(A_{n,n'}) ) = O(\eps)$$
for all but finitely many $n$.

By Lemma \ref{slib}, we may assume that $n$ is such that \eqref{san2} holds.  As a consequence, we see that the only terms in the above sum which are non-vanishing are those for which $i = (1 - O(\eps^2)) n$.  But then if we apply \eqref{san2} and crudely estimate $f(t) \leq -\log t$ we obtain the claim.
\end{proof}

\subsection{The contribution of very small $t$}\label{section:smallt}

Finally, we need to control the contribution when $t \leq \delta$.

\begin{lemma} With probability $1$, and if $\delta$ is sufficiently small depending on $\eps$, we have
\begin{equation}\label{fire-4}
\int_0^{\delta^2} |\log t| |d\nu_{n,n'}(t)| = O(\eps)
\end{equation}
for all but finitely many $n$.
\end{lemma}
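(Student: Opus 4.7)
The plan is to bound each of the two positive measures in $|d\nu_{n,n'}|$ separately and then apply the triangle inequality. Concretely, $|d\nu_{n,n'}| \le d\mu_{\frac{1}{n'}A_{n,n'}A_{n,n'}^*} + d\mu_{\frac{1}{n'}B_{n,n'}B_{n,n'}^*}$, so it suffices to show that $\int_0^{\delta^2}|\log t|\,d\mu(t)=O(\eps)$ almost surely for all but finitely many $n$, where $\mu$ is either of these two ESDs. By symmetry we treat only $\mu:=\mu_{\frac{1}{n'}A_{n,n'}A_{n,n'}^*}$.

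The main input is a distribution-function bound extracted from Lemma~\ref{ab1}. Writing $t_i:=\sigma_i(A_{n,n'})^2/n'$ and $N(t):=\#\{i\le n':t_i\le t\}$, so that $\mu([0,t])=N(t)/n'$, Lemma~\ref{ab1} rescales to
$$
\sum_{i=1}^{n'}\frac{1}{t_i} \;=\; n'\sum_{i=1}^{n'}\frac{1}{\sigma_i(A_{n,n'})^2} \;=\; n\cdot\frac{1}{n}\sum_{i=1}^{n'}\!\left(\frac{\sigma_i(A_{n,n'})}{\sqrt n}\right)^{\!-2} \;=\; O(n),
$$
almost surely for all but finitely many $n$ (the implicit constant being allowed to depend on $\delta$, which is fixed once $\eps$ is fixed). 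Markov's inequality then gives the distribution-function bound
$$
\frac{N(t)}{t} \;\le\; \sum_{t_i\le t}\frac{1}{t_i} \;\le\; \sum_{i=1}^{n'}\frac{1}{t_i} \;=\; O(n),
\qquad\text{hence}\qquad \mu([0,t]) \;=\; \frac{N(t)}{n'} \;=\; O(t),
$$
uniformly in $n$ and in $t>0$.

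Since $\delta<1/2$ we have $|\log t|=-\log t$ on $(0,\delta^2]$, and since $\mu$ is a finite sum of point masses sitting at strictly positive locations (indeed $\sigma_{n'}(A_{n,n'})\ge\sigma_n(A_n)\ge n^{-O(1)}$ almost surely, by Lemma~\ref{lemma:lsv} and singular-value interlacing), the boundary term at the origin in an integration by parts vanishes for all $n$ large enough, and we obtain
$$
\int_0^{\delta^2}|\log t|\,d\mu(t) \;=\; (-\log\delta^2)\,\mu([0,\delta^2]) \;+\; \int_0^{\delta^2}\frac{\mu([0,t])}{t}\,dt.
$$
Substituting the bound $\mu([0,t])=O(t)$ from the previous step yields an upper bound of $O(\delta^2|\log\delta|)+O(\delta^2)=O(\delta^2|\log\delta|)$, which can be made smaller than $\eps$ by choosing $\delta$ sufficiently small depending on $\eps$. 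The same argument applies verbatim to $\mu_{\frac{1}{n'}B_{n,n'}B_{n,n'}^*}$, completing the proof. There is no real obstacle here: the only non-trivial ingredient is the inverse-second-moment bound on the singular values of $A_{n,n'}$ supplied by Lemma~\ref{ab1}, which itself rests on the distance estimate of Proposition~\ref{xw}; everything else is a short layer-cake computation.
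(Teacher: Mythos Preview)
Your argument is correct and is essentially the paper's own approach, just packaged differently. Both proofs rest entirely on the negative-second-moment bound \eqref{san} of Lemma~\ref{ab1}: the paper uses the pointwise inequality $|\log t|\,\I(t\le\delta)\le \eps\, t^{-2}$ (valid once $\delta$ is small) and plugs directly into \eqref{san}, while you first convert \eqref{san} into the distribution-function estimate $\mu([0,t])=O(t)$ via Markov and then integrate by parts. The extra ingredients you invoke (interlacing and the least-singular-value bound to justify the vanishing boundary term at $0$) are harmless, though strictly speaking unnecessary since the integrand $|\log t|$ is already $\mu$-integrable on $(0,\delta^2]$ by your own bound.

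One small bookkeeping point: you note that the implicit constant in $\sum_i 1/t_i = O(n)$ may depend on $\delta$, and indeed it does---the proof of Lemma~\ref{ab1} gives a bound of order $1/\delta$. Tracking this through, your $\mu([0,t])$ is really $O(t/\delta)$, and the final estimate comes out as $O(\delta|\log\delta|)$ rather than $O(\delta^2|\log\delta|)$. This still tends to zero as $\delta\to 0$, so the conclusion is unaffected; the paper's direct argument has exactly the same hidden $\delta$-dependence, so this is not a flaw specific to your write-up.
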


\begin{proof} By arguing as in the proof of Lemma \ref{go}, it suffices to show that we almost surely have
$$ \frac{1}{n} \sum_{i=1}^{n'} g( \frac{1}{\sqrt{n}} \sigma_i(A_{n,n'}) ) = O(\eps)$$
for all but finitely many $n$, where $g(t) := |\log t| \I( t^2 \leq \delta^2 )$.

By Lemmas \ref{ab1}, we may assume $n$ is such that \eqref{san} holds.  On the other hand, if $\delta$ is small enough, we have the bound $g(t) \leq \eps t^{-2}$.  The claim now follows from \eqref{san}.
\end{proof}

Putting together \eqref{fire-1}, \eqref{fire-2}, \eqref{fire-3}, \eqref{fire-4} we see that with probability $1-O(\eps)$, we have \eqref{lnu2} for all but finitely many $n$, and the claim follows.

\section{Extensions} \label{section:extensions}

\subsection{Proof of Theorem \ref{theorem:main2}}

The theorem in the case of almost sure convergence follows immediately from Theorem \ref{theorem:main1} by conditioning on $M_n$, so it remains to verify the theorem in the case of convergence in probability.

Let fix a test function $f$ (as in \eqref{fuz}) and a positive $\eps$. By the boundedness in probability of $\frac{1}{n^2} \|M\|_2^2$, we can find a $C = C_\eps$ such that $\P( M_n \in \Omega_n ) \geq 1-\eps$, where
$$ \Omega_n := \{ M \in M_n(\C): \frac{1}{n^2} \|M\|_2^2 \leq C \}.$$
Let $M_n^f$ be the matrix in $\Omega_n$
which maximizes\footnote{If the maximum is not attained, one can instead choose $M_n^f$ to be a matrix which maximizes this quantity to within a factor of two (say).} the quantity
$$\P( |\int_\BBC f(z)\ d\mu_{\frac{1}{\sqrt{n}} (M_n^f +X_n}(z) - \int_\BBC f(z)\ d\mu_{\frac{1}{\sqrt{n}} (M_n^f +Y_n}(z)|
\ge \eps
).$$
Applying Theorem \ref{theorem:main1} to the sequence $M_n^f +X_n$
and $M_n^f + Y_n$, we see that this quantity is $o(1)$.

Theorem \ref{theorem:main2} follows by integrating over all possible values of $M_n$ using the definition of $M_n^f$, as well as the fact that $\P(\Omega_n) \geq 1-\eps$, and then letting $\eps \to 0$.

\subsection{Proof of Theorem \ref{theorem:main3}} We first verify the claim for convergence in probability.

The condition (i) of Theorem \ref{theorem:replacement} is satisfied
thanks to the boundedness in probability of \eqref{eqn:trace-3}.  In order to complete the proof, one needs
to check (ii). Notice that

$$\det (\frac{1}{\sqrt n} A_n -zI) = \det (\frac{1}{\sqrt n}
(K_n^{-1} M_n L_n^{-1} + X_n ) - z K_n^{-1} L_n^{-1} ) \det L_n K_n.
$$

The term  $\det L_n K_n$ also appears in $\det (\frac{1}{\sqrt n}
B_n -zI)$ and becomes additive (and thus cancels) after taking
logarithm. Therefore, one only needs to show that

\begin{eqnarray*} & \frac{1}{n} \log |\det \Big( \frac{1}{\sqrt n}(K_n^{-1} M_n L_n^{-1}
+ X_n ) - z K_n^{-1} L_n^{-1} \Big)| \\ &- \frac{1}{n} \log |\det
\Big( \frac{1}{\sqrt n}(K_n^{-1} M_n L_n^{-1} + Y_n ) - z K_n^{-1}
L_n^{-1}\Big)| \end{eqnarray*} converges in probability to zero.

One can obtain this by repeating the proof of Proposition
\ref{lemma:determinant}. The slight change here is that $zI$ is
replaced by $z K_n^{-1} L_n^{-1}$, but this has no significant
impact, except that we need to show

$$F_n: = \frac{1}{\sqrt n}(K_n^{-1} M_n L_n^{-1}- z K_n^{-1}
L_n^{-1}) $$ satisfies

$$\frac{1}{n^2} \trace F_nF_n^\ast = \frac{1}{n^2} \|F_n\|_2^2 =O(1)$$
almost surely (in order to guarantee \eqref{eqn:conditionM}).
But this is a consequence of the boundedness in probability of \eqref{eqn:trace-3}.

The proof of the almost sure convergence is established similarly, with the obvious changes (e.g. replacing boundedness in probability with almost sure boundedness).  We omit the details.

\section{Proof of Theorem \ref{theorem:main4}}\label{thm4}

We first prove that (ii) implies (i) for almost sure convergence.
Let $A_n$ and $\mu$ be as in Theorem \ref{theorem:main4}. Construct a diagonal matrix
$B'_n$ whose diagonal entries are independent samples from $\mu$ and
let $B_n := \sqrt n B'_n $. We wish to invoke Theorem \ref{theorem:replacement}.  We first need to verify the almost sure boundedness of \eqref{pan}.  The bound for $A_n$ follows from Lemma \ref{tight}, and the bound for $B_n$ follows from the second moment hypothesis on $\mu$ and the (strong) law of large numbers. By Theorem \ref{theorem:replacement}, the problem now reduces to showing that for almost all complex numbers $z$,
$$\frac{1}{n}
 \log |\det(\frac{1}{\sqrt{n}} A_n - zI)| - \frac{1}{n} \log |\det(\frac{1}{\sqrt{n}} B_n -
 zI)|$$
 converges almost surely to zero.
The right hand side is easy to compute:
$$\frac{1}{n} \log |\det(\frac{1}{\sqrt{n}} B_n -
 zI)| =\frac{1}{n} \log |\det (B'_n -zI)| = \frac{\sum_{i=1}^n \log
 |\lambda_i-z| }{n},$$
 \noindent where $\lambda_i$ are iid samples from $\mu$. On the other hand, from Fubini's theorem we see that
$\int_{\BBC} \log|w-z|\ d\mu(w)$ is locally integrable in $z$, and thus
\begin{equation}\label{local}
\int_{\BBC} \log|w-z|\ d\mu(w) < \infty
\end{equation}
for almost every $z$.  If $z$ is such that \eqref{local} holds, then by the strong law of large numbers, we see that
$\frac{\sum_{i=1}^n \log |\lambda_i-z| }{n}$ converges almost surely to $\int_{\BBC} \log|w-z|\ d\mu(w)$.  This shows that (ii) implies (i) for almost sure convergence.  The proof for convergence in probability is identical and is left as an exercise to the reader.

Now we show that (iii) implies (ii) for almost sure convergence.  Let $z$ be such that \eqref{local} and (iii) hold.  To show (ii), it suffices from \eqref{det} to show that $\frac{1}{n} \sum_{i=1}^n \log \sigma_i$ converges almost surely to $\int_{\BBC} \log|w-z|\ d\mu(w)$, where $\sigma_i = \sigma_i( \frac{1}{\sqrt{n}} A_n - zI )$ are the singular values of $\frac{1}{\sqrt{n}} A_n - zI$.  On the other hand, from (iii) we already know that $\frac{1}{n} \sum_{i=1}^n \log \sqrt{\sigma^2_i + \eps_n}$ converges almost surely to $\int_{\BBC} \log|w-z|\ d\mu(w)$.  Thus it suffices to show that
\begin{equation}\label{seps}
\frac{1}{n} \sum_{i=1}^n \log \sqrt{\sigma^2_i + \eps_n} - \log \sigma_i
\end{equation}
converges almost surely to zero.

From Lemma \ref{tight}, we know that $\frac{1}{n^2} \|A_n\|_2^2$ is almost surely bounded, and so for each $z$
$$ \frac{1}{n} \sum_{i=1}^n \sigma_i^2 = \frac{1}{n} \| \frac{1}{\sqrt{n}} A_n - z I \|_2^2$$
is almost surely bounded also.  From this we easily see that
$$\frac{1}{n} \sum_{1 \leq i \leq n: \sigma_i \geq \delta_n} \log \sqrt{\sigma^2_i + \eps_n} - \log \sigma_i$$
converges almost surely to zero for some sequence $\delta_n$ (depending on $\eps_n$) converging sufficiently slowly to zero.  To conclude the almost sure convergence of \eqref{seps} to zero, it thus suffices to show that
$$\frac{1}{n} \sum_{1 \leq i \leq n: \sigma_i \leq \delta_n} \log \frac{1}{\sigma_i}$$
converges almost surely to zero.
Using Lemma \ref{lemma:lsv}, we almost surely have $\sup_i \log \frac{1}{\sigma_i} \leq O(\log n)$ for all but finitely many $n$, so it suffices to show that
$$\frac{1}{n} \sum_{1 \leq i \leq n - n^{0.99}: \sigma_i < \delta_n} \log \frac{1}{\sigma_i}.$$
converges almost surely to zero.  To do this, it suffices by the union bound and the Borel-Cantelli lemma to show that
\begin{equation}\label{eqn:lowersing}
 \P(  \sigma_{n-i} \leq c \frac{i}{n}  ) = O( \exp( - n^{0.01} ) ).
\end{equation}
for all $1 \leq i \leq n - n^{0.99}$ and some $c>0$ independent of $n$.

For this we argue as in the proof of Lemma \ref{slib}. Fix $i$. Let $A'_n$ be the matrix form by the first $n-k$ rows of
$A_n - z \sqrt n I$ with $k:=i/2$ and $\sigma'_j, 1\le j\le n-k$ be
the singular values of $A_n'$(in decreasing order, as usual).  By
the interlacing law (Lemma \ref{lemma:interlacing}) and
re-normalizing,

\begin{equation} \label{eqn:lowersing1} \sigma_{n-i} \ge \frac{1}{\sqrt n}
\sigma'_{n-i}. \end{equation}

By Lemma \ref{lemma:twosum}, we have that

$$\sigma_1'^{-2} + \dots + \sigma_{n-k}'^{-2} = \dist_1 ^{-2} +
\dots + \dist_{n-k}^{-2}, $$ where $\dist_j$ is the distance from
the $j$th row of $A'_n$ to the subspace spanned by the remaining
rows.

As shown in the proof of Lemma \ref{highdim}, with probability
$1-\exp(-n^{-0.01})$, $\dist_j$ is bounded from below by $\Omega
(\sqrt k) = \Omega (\sqrt i)$ for all $j$. Thus, with this
probability,  the right hand side in the above identity is $O(n/i)$.
On the other hand, as the $\sigma_j'$ are ordered decreasingly, the
left hand side is at least

$$(i-k) \sigma_{n-i}'^{-2} = \frac{i}{2} \sigma_{n-i}'^{-2}. $$

It follows that with probability $1-\exp(-n^{-0.01})$,

$$\sigma'_{n-i} = \Omega ( \frac{i}{\sqrt n} ). $$

This and \eqref{eqn:lowersing1} complete the proof of
\eqref{eqn:lowersing}, and so \eqref{seps} converges almost surely to zero.

As previously observed, the convergence of \eqref{seps} to zero shows that (ii) implies (iii) for almost sure convergence.  An inspection of the argument shows the convergence of \eqref{seps} to zero also lets us deduce (iii) from (ii).  The claim for convergence in probability follows similarly.  To conclude the proof of Theorem \ref{theorem:main4}, it thus suffices to show that (i) implies (ii).

Again we start with the almost sure convergence case.
Assume that (i) holds, and let $z$ be such that \eqref{local} holds.  By shifting $A$ by $\sqrt{n} z I$ if necessary we may take $z$ to be zero.  Let $\lambda_1,\ldots,\lambda_n$ denote the eigenvalues of $\frac{1}{\sqrt{n}} A_n$.  By \eqref{det}, it suffices to show that
$\frac{1}{n} \sum_{j=1}^n \log |\lambda_j|$ converges almost surely to $\int_{\BBC} \log|w|\ d\mu(w)$.  From \eqref{lam} we know that $\frac{1}{n}  \sum_{j=1}^n |\lambda_j|^2$ is almost surely bounded.  From this and (i) we conclude that $\frac{1}{n} \sum_{j=1}^n \log (|\lambda_j|+\eps)$ converges almost surely to $\int_{\BBC} (\log|w|+\eps)\ d\mu(w)$ for any fixed $\eps > 0$.  Combining this with \eqref{local} and dominated convergence, we see that $\frac{1}{n} \sum_{j=1}^n \log (|\lambda_j|+\eps_n)$ converges almost surely to $\int_{\BBC} \log|w|\ d\mu(w)$ for some sequence $\eps_n > 0$ converging sufficiently slowly to zero.  It thus suffices to show that
$$\frac{1}{n} \sum_{j=1}^n \log (|\lambda_j|+\eps_n) - \log |\lambda_j|$$
converges almost surely to zero.

By repeating the arguments used to establish the almost sure convergence of \eqref{seps} to zero, it suffices to show that
$$\frac{1}{n} \sum_{1 \leq i \leq n: |\lambda_i| \leq \delta_n} \log \frac{1}{|\lambda_i|}$$
converges almost surely to zero.

Let us order the eigenvalues $\lambda_i$ so that $|\lambda_1| \geq \ldots \geq |\lambda_n|$.  From Lemma \ref{lemma:lsv} and \eqref{eqn:lowersing} (and the Borel-Cantelli lemma) we know that we almost surely have
$$\frac{1}{n} \sum_{(1-\kappa) n < i \leq n} \log \frac{1}{\sigma_i} \leq O( \kappa \log \frac{1}{\kappa} )$$
for all but finitely many $n$ for any fixed $0 < \kappa < 1/2$, and hence by Weyl's comparison inequality (Lemma \ref{product}) that we almost surely have
$$\frac{1}{n} \sum_{(1-\kappa) n < i \leq n} \log \frac{1}{|\lambda_i|} \leq O(\kappa \log \frac{1}{\kappa} )$$
for all but finitely many $n$ also.  Since the left-hand side is bounded from below by $\kappa \log \frac{1}{| \lambda_{\lfloor (1-\kappa) n\rfloor}|}$ we almost surely conclude a lower bound of the form
$$ |\lambda_{\lfloor (1-\kappa) n\rfloor}| \geq \kappa^{O(1)}$$
for all but finitely many $n$.  In particular (by setting $\delta$ to be a suitable power of $\kappa$) this implies that almost surely
$$\frac{1}{n} \sum_{1 \leq i \leq n: |\lambda_i| \leq \delta} \log \frac{1}{|\lambda_i|} \leq O(\delta^c)$$
for all but finitely many $n$ for any fixed $0 < \delta \ll 1$ and some absolute constant $c > 0$, and the claim follows.  The analogous implication for convergence in probability is similar.  The proof of Theorem \ref{theorem:main4} is now complete.

\appendix

\section{Linear algebra inequalities}\label{remarks-sec}

In this appendix we record some elementary identities and inequalities regarding the eigenvalues and singular values of matrices.

\begin{lemma}[Cauchy's interlacing law]\label{lemma:interlacing}
Let $A$ be an $n \times n$ matrix with complex entries and $A'$
be the submatrix formed by the first $m:=n-k$ rows. Let $\sigma_1(A) \geq \ldots \geq \sigma_n(A) \geq 0$ denote the singular values of $A$, and similarly for $A'$.  Then we have
$$\sigma_i (A) \ge \sigma_i(A') \ge \sigma_{i+k}(A)$$
for every $1 \leq i \leq n-k$.
\end{lemma}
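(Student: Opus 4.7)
The plan is to reduce the claim to the classical Cauchy interlacing law for Hermitian matrices, applied to $AA^\ast$, and then take square roots. First, I would recall that the squares $\sigma_i(A)^2$ are precisely the eigenvalues of the positive semidefinite Hermitian matrix $H := AA^\ast$, ordered decreasingly, and similarly $\sigma_i(A')^2$ are the eigenvalues of $H' := A'(A')^\ast$. The key observation is that if $A'$ is formed by the first $m = n-k$ rows of $A$, then $H'$ is exactly the top-left $m \times m$ principal submatrix of $H$: indeed $(H')_{ij} = \sum_\ell A_{i\ell}\overline{A_{j\ell}} = H_{ij}$ for $1 \leq i,j \leq m$.

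Next, I would invoke the standard Hermitian interlacing inequality: if $H$ is an $n\times n$ Hermitian matrix with eigenvalues $\lambda_1(H) \geq \dots \geq \lambda_n(H)$ and $H'$ is an $m \times m$ principal submatrix with eigenvalues $\lambda_1(H') \geq \dots \geq \lambda_m(H')$, then
$$\lambda_i(H) \geq \lambda_i(H') \geq \lambda_{i+k}(H) \qquad (1 \leq i \leq m).$$
Substituting $\lambda_i(H) = \sigma_i(A)^2$ and $\lambda_i(H') = \sigma_i(A')^2$ and taking square roots (which is monotone on $[0,\infty)$) yields the desired chain $\sigma_i(A) \geq \sigma_i(A') \geq \sigma_{i+k}(A)$.

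For completeness I would sketch (or cite) the proof of Hermitian interlacing using the Courant--Fischer min-max characterization
$$\lambda_i(H) = \max_{\dim V = i}\, \min_{v \in V,\ \|v\|=1} \langle Hv, v\rangle.$$
Identifying $\mathbb{C}^m$ with the subspace $W \subset \mathbb{C}^n$ of vectors vanishing in the last $k$ coordinates, the quadratic form associated to $H'$ agrees with that of $H$ on $W$. Restricting the max to subspaces inside $W$ gives $\lambda_i(H') \leq \lambda_i(H)$; running the dual min-max (or combining the previous inequality with a dimension count using $\dim W + \dim V \geq n+1$ when $\dim V = i+k$) gives $\lambda_i(H') \geq \lambda_{i+k}(H)$.

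There is no real obstacle here, as this is a textbook result; the only mild subtlety is bookkeeping the index shift by $k$ when applying the dual characterization. As an alternative route, one could prove $\sigma_i(A') \leq \sigma_i(A)$ directly from the singular-value min-max $\sigma_i(A) = \max_{\dim V = i}\min_{v\in V,\|v\|=1}\|Av\|$, using $\|A'v\|^2 \leq \|Av\|^2$ (since dropping rows only discards nonnegative summands), and then obtain the lower bound $\sigma_i(A') \geq \sigma_{i+k}(A)$ by a similar dual min-max argument; but the Hermitian reduction seems cleanest.
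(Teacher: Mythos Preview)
Your proof is correct. The paper's own argument is the direct singular-value minimax route that you mention at the end as an ``alternative'': it simply writes
\[
\sigma_i(A) = \sup_{\dim V_i = i,\ V_i \subset \mathbb{C}^n}\ \inf_{v \in V_i,\ \|v\|=1} \|Av\|
\]
(and the analogous formula for $A'$) and reads off both inequalities from the restriction of admissible subspaces. Your primary proposal instead passes through $H = AA^\ast$ and its principal $m\times m$ block $H' = A'(A')^\ast$, invokes Hermitian Cauchy interlacing, and takes square roots. The two arguments are equivalent at the core (both rest on Courant--Fischer), differing only in whether one works with $\|Av\|$ directly or with the quadratic form $\langle AA^\ast v, v\rangle$; your route has the minor convenience of citing the classical Hermitian statement verbatim, while the paper's is marginally more direct.
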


\begin{proof}  The claim follows easily from the minimax characterization
$$ \sigma_i(A) = \sup_{V_i \subset \BBC^n} \inf_{v \in V_i: \|v\|=1} \| Av_i \|$$
and
$$ \sigma_i(A') = \sup_{V_i \subset \BBC^{n-k}} \inf_{v \in V_i: \|v\|=1} \| Av_i \|$$
of the singular values, where $V_i$ range over $i$-dimensional complex subspaces.
\end{proof}

\begin{lemma}[Weyl comparison inequality for second moment]\label{compar}  Let $A = (a_{ij})_{1 \leq i,j \leq n} \in M_n(\BBC)$ have generalized eigenvalues $\lambda_1,\ldots,\lambda_n \in \BBC$ and singular values $\sigma_1(A) \geq \ldots \geq \sigma_n(A) \geq 0$.  Then
$$ \sum_{j=1}^n |\lambda_j|^2 \leq \sum_{j=1}^n \sigma_j(A)^2 = \|A\|_2^2 = \sum_{i=1}^n \sum_{j=1}^n |a_{ij}|^2.$$
\end{lemma}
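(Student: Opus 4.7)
The plan is to establish the two equalities by direct computation and the inequality via Schur's unitary triangularization.

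First I would handle the equalities on the right. The definition gives $\|A\|_2^2 = \operatorname{trace}(A A^\ast)$, and expanding the trace entry-by-entry yields the rightmost identity $\|A\|_2^2 = \sum_{i,j} |a_{ij}|^2$. For the middle identity, I would use that the singular values $\sigma_j(A)$ are by definition the non-negative square roots of the eigenvalues of the positive semi-definite matrix $AA^\ast$; hence $\sum_j \sigma_j(A)^2 = \operatorname{trace}(AA^\ast) = \|A\|_2^2$.

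For the key inequality $\sum_j |\lambda_j|^2 \le \|A\|_2^2$, I would invoke Schur's theorem to write $A = U T U^\ast$ where $U$ is unitary and $T = (t_{ij})$ is upper triangular with diagonal entries $t_{jj} = \lambda_j$ (in some ordering). Since the Hilbert-Schmidt norm is unitarily invariant (as $\operatorname{trace}((U T U^\ast)(U T U^\ast)^\ast) = \operatorname{trace}(T T^\ast)$), we obtain
\[
\|A\|_2^2 = \|T\|_2^2 = \sum_{i \le j} |t_{ij}|^2 = \sum_{j=1}^n |\lambda_j|^2 + \sum_{i<j} |t_{ij}|^2 \ \ge\ \sum_{j=1}^n |\lambda_j|^2,
\]
the desired inequality, with equality iff $T$ is diagonal, i.e.\ iff $A$ is normal.

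There is no real obstacle here; this is a classical computation. The only point to check is the unitary invariance of $\|\cdot\|_2^2$ and Schur's triangularization, both of which are standard linear algebra. I would write the argument out in roughly the order above.
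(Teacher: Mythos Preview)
Your proposal is correct and follows essentially the same approach as the paper: both reduce to the upper-triangular case via a unitary conjugation and then observe that the diagonal entries are the eigenvalues. The only cosmetic difference is that you invoke Schur's theorem directly, whereas the paper rederives that decomposition by combining the Jordan normal form with a $QR$ factorization of the conjugating matrix; your route is slightly cleaner.
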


\begin{proof} The two equalities here are clear, so it suffices to prove the inequality.  By the Jordan normal form we can write $A = BUB^{-1}$ for some upper-triangular $U$ and invertible $B$.  By the $QR$ factorization we can write $B = QR$ for some orthogonal $Q$ and upper triangular $R$.  We conclude that $A = QVQ^{-1}$ for some upper triangular $V$.  Conjugating by $Q$, we thus reduce to the case when $A$ is an upper triangular matrix, in which case the eigenvalues are simply the diagonal entries $a_{11},\ldots,a_{nn}$ and the claim is clear.
\end{proof}

We also have the following (stronger) variant of the above inequality:

\begin{lemma}[Weyl comparison inequality for products]\label{product}  Let $A = (a_{ij})_{1 \leq i,j \leq n} \in M_n(\BBC)$ have generalized eigenvalues $\lambda_1,\ldots,\lambda_n \in \BBC$, ordered so that $|\lambda_1| \leq \ldots \leq |\lambda_n|$, and singular values $\sigma_1(A) \geq \ldots \geq \sigma_n(A) \geq 0$.  Then we have
$$ \prod_{j=1}^J |\lambda_j| \leq \prod_{j=1}^J \sigma_j(A)$$
and
$$ \prod_{j=J}^n \sigma_j(A) \leq \prod_{j=J}^n |\lambda_j|$$
for all $0 \leq J \leq n$.
\end{lemma}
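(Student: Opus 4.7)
The plan is to deduce both stated inequalities from the classical multiplicative Weyl inequality
\[
\prod_{j=1}^k |\mu_j| \;\leq\; \prod_{j=1}^k \sigma_j(A), \qquad 1 \leq k \leq n,
\]
where $|\mu_1| \geq |\mu_2| \geq \ldots \geq |\mu_n|$ denotes the rearrangement of the eigenvalue magnitudes in decreasing order, together with the determinantal identity $\prod_{j=1}^n |\mu_j| = |\det A| = \prod_{j=1}^n \sigma_j(A)$.

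To establish the classical inequality, I would mimic the approach already used in Lemma \ref{compar}. Combining the Jordan and $QR$ decompositions (or invoking Schur's theorem directly) produces a unitary $Q$ with $Q^{-1} A Q = T$ upper triangular; the diagonal entries of $T$ are the eigenvalues of $A$ in some order, which I am free to permute by further unitary conjugation, and since unitary conjugation preserves $AA^{\ast}$ it preserves singular values. I may therefore assume $A$ is upper triangular with diagonal entries $\mu_1, \ldots, \mu_n$ arranged in decreasing magnitude. Now pass to the $k$-th exterior power $\Lambda^k A$ acting on $\Lambda^k \BBC^n$: its eigenvalues are the $k$-fold products $\mu_{i_1}\cdots \mu_{i_k}$ over $i_1 < \ldots < i_k$, its singular values are the corresponding products of singular values of $A$, and in particular $|\mu_1 \cdots \mu_k|$ is an eigenvalue while $\sigma_1(A)\cdots\sigma_k(A) = \|\Lambda^k A\|_{\rm op}$ is its largest singular value. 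Applying the $k=1$ case of Lemma \ref{compar} (the elementary bound $|\lambda_{\max}(M)| \leq \|M\|_{\rm op}$) to $M = \Lambda^k A$ yields the Weyl inequality. If one wishes to avoid exterior powers, an alternative is a direct induction on $k$ using the upper-triangular form: the top-left $k \times k$ principal submatrix has determinant $\mu_1 \cdots \mu_k$, and combining this with Cauchy's interlacing law (Lemma \ref{lemma:interlacing}) for the singular values of principal submatrices closes the argument.

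Finally I would derive the two forms stated in the lemma by rearrangement. Under the lemma's ordering $|\lambda_1| \leq \ldots \leq |\lambda_n|$ we have $|\mu_j| = |\lambda_{n-j+1}|$, so the product $\prod_{j=1}^J |\lambda_j|$ of the $J$ smallest is bounded termwise by $\prod_{j=1}^J |\lambda_{n-J+j}| = \prod_{j=1}^J |\mu_j|$, which the classical Weyl bound controls by $\prod_{j=1}^J \sigma_j(A)$; this yields the first inequality. For the second, dividing the determinantal identity $\prod_{j=1}^n \sigma_j(A) = \prod_{j=1}^n |\lambda_j|$ by the Weyl bound with $k = J-1$ (after the same rearrangement) gives
\[
\prod_{j=J}^n \sigma_j(A) \;\leq\; \prod_{j=1}^{n-J+1}|\lambda_j|,
\]
and since the $|\lambda_j|$'s are increasing the right-hand side is in turn dominated termwise by $\prod_{j=J}^n |\lambda_j|$.

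The only real obstacle is the classical Weyl majorization step itself, since passing from singular values to eigenvalues always requires an idea beyond the trivial $k=1$ case; either the exterior-power reduction or a careful induction exploiting upper-triangularity suffices. Once that ingredient is in place the two stated forms are obtained by the routine rearrangement above, using only the determinantal identity and the chosen ordering of the $|\lambda_j|$.
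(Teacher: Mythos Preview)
Your proposal is correct. Your route (b) --- Schur triangularization followed by the principal-submatrix/interlacing trick --- is precisely the paper's argument, except that the paper applies it directly to the $J$ \emph{smallest} eigenvalues rather than first establishing the sharper classical Weyl inequality and then discarding information via rearrangement. Concretely, the paper reduces to upper-triangular $A$, selects the principal submatrix $A'$ on the rows and columns carrying $\lambda_1,\ldots,\lambda_J$, reads off $|\det A'|=|\lambda_1\cdots\lambda_J|=\prod_{j=1}^J\sigma_j(A')$, and bounds this by $\prod_{j=1}^J\sigma_j(A)$ via Cauchy interlacing; the second inequality then drops out of the determinant identity \eqref{det} applied with $J-1$, with no rearrangement step at all. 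Your exterior-power alternative is a genuinely different (and standard) proof of the same key step, and it buys the full Weyl majorization at once, at the cost of invoking a construction the paper does not otherwise need.
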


\begin{proof} It suffices to prove the former claim, as the latter then follows from \eqref{det}.  By arguing as in Lemma \ref{compar} we may assume that $A$ is upper triangular, so that the diagonal entries are some permutation of $\lambda_1,\ldots,\lambda_n$.  Consider the symmetric minor $A'$ of $A$ formed by the rows and columns corresponding to the entries $\lambda_1,\ldots,\lambda_J$.  The determinant of this matrix is then $\lambda_1 \ldots \lambda_J$, and thus by \eqref{det} we have
$$ \prod_{j=1}^J \sigma_j(A') = \prod_{j=1}^J |\lambda_j|.$$
The claim then follows from the Cauchy interlacing inequality (Lemma \ref{lemma:interlacing}).
\end{proof}

Now we record a useful identity for the \emph{negative} second moment of a rectangular matrix.

\begin{lemma}[Negative second moment]\label{lemma:twosum}  Let $1 \leq n' \leq n$, and let $A$ be a full rank $n' \times n$ matrix with singular values $\sigma_1(A) \geq \ldots \geq \sigma_{n'}(A) > 0$ and rows $X_1,\ldots,X_{n'} \in \BBC^n$.  For each $1 \leq i \leq n'$, let $W_i$ be the hyperplane generated by the $n'-1$ rows $X_1,\ldots,X_{i-1},X_{i+1},\ldots,X_{n'}$.  Then
$$ \sum_{j=1}^{n'} \sigma_j(A)^{-2} = \sum_{j=1}^{n'} \dist(X_j,W_j)^{-2}.$$
\end{lemma}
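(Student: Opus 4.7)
The plan is to recognize both sides of the identity as the trace of the inverse Gram matrix $G := AA^{\ast}$, expressed in two different bases. Note that $G$ is an $n' \times n'$ Hermitian positive-definite matrix (since $A$ has full row rank), with entries $G_{ij} = \langle X_i, X_j \rangle$, and its eigenvalues are exactly $\sigma_1(A)^2, \ldots, \sigma_{n'}(A)^2$. Hence
$$ \sum_{j=1}^{n'} \sigma_j(A)^{-2} = \trace(G^{-1}) = \sum_{i=1}^{n'} (G^{-1})_{ii},$$
so it suffices to show that for each $i$ we have $(G^{-1})_{ii} = \dist(X_i, W_i)^{-2}$.

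To compute the diagonal entry of $G^{-1}$, I would apply Cramer's rule:
$$ (G^{-1})_{ii} = \frac{\det(G^{(i)})}{\det(G)},$$
where $G^{(i)}$ denotes the $(n'-1) \times (n'-1)$ principal minor of $G$ obtained by deleting the $i$-th row and $i$-th column. Observe that $G^{(i)}$ is itself the Gram matrix of the rows $\{X_j\}_{j \neq i}$, while $G$ is the Gram matrix of all the rows.

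The key geometric step is to recall the standard identity relating Gram determinants to volumes: for vectors $v_1, \ldots, v_k$ in $\BBC^n$, the determinant of their Gram matrix equals the squared $k$-dimensional volume of the parallelepiped they span. Therefore $\det(G)$ is the squared volume of the parallelepiped $\Pi$ spanned by $X_1, \ldots, X_{n'}$, and $\det(G^{(i)})$ is the squared volume of the base parallelepiped $\Pi_i$ spanned by the remaining rows $\{X_j\}_{j \neq i}$. By the elementary base-times-height formula applied to $\Pi$ with base $\Pi_i$ and apex $X_i$,
$$ \operatorname{vol}(\Pi) = \operatorname{vol}(\Pi_i) \cdot \dist(X_i, W_i),$$
since $W_i = \Span(\{X_j\}_{j \neq i})$ is precisely the hyperplane containing the base. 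Squaring and rearranging gives
$$ (G^{-1})_{ii} = \frac{\det(G^{(i)})}{\det(G)} = \frac{\operatorname{vol}(\Pi_i)^2}{\operatorname{vol}(\Pi)^2} = \dist(X_i, W_i)^{-2}.$$
Summing this identity over $i$ and combining with the trace formula above yields the claim.

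The proof is essentially a bookkeeping calculation; the only subtle point is the geometric identification of $\det(G)/\det(G^{(i)})$ with the squared distance from $X_i$ to $W_i$, which one could alternatively verify by direct computation using block matrix inversion (writing $G$ in a block form that isolates the $i$-th row and column and applying the Schur complement formula). There is no real obstacle here — the result is a classical linear-algebra fact, presented here in the form most useful for the distance-based analysis of $|\det A|$ via \eqref{det}.
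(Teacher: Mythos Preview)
Your proof is correct. Both your argument and the paper's reduce at once to showing that each diagonal entry of $(AA^\ast)^{-1}$ equals $\dist(X_i,W_i)^{-2}$, but you establish this differently: you invoke Cramer's rule and the Gram-determinant/volume interpretation together with the base-times-height formula, whereas the paper works more directly, setting $v_i := (AA^\ast)^{-1} e_i$, observing that $A^\ast v_i$ is orthogonal to each $X_j$ with $j \neq i$ and has inner product $1$ with $X_i$, and reading off $v_{i,i}\,\dist(X_i,W_i)^2 = 1$ from that. Your route is perhaps more geometric and self-contained for a reader who already knows the Gram-determinant facts; the paper's route avoids determinants entirely and is essentially the Schur-complement computation you mention at the end, carried out by hand. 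Either way the content is the same classical identity.
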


\begin{proof}  Observe that the $n' \times n'$ matrix $(AA^\ast)^{-1}$ has eigenvalues
$$\sigma_1(A)^{-2},\ldots,\sigma_{n'}(A)^{-2}. $$ Taking traces, we conclude that
$$ \sum_{j=1}^{n'} \sigma_j(A)^{-2} = \sum_{j=1}^{n'} (AA^\ast)^{-1} e_j \cdot e_j$$
where $e_1,\ldots,e_{n'}$ is the standard basis of $\BBC^{n'}$. But
if $v_j := (AA^\ast)^{-1} e_j = (v_{j,1},\ldots,v_{j,n'})$, then $A^\ast
v_j = v_{j,1} X_1 + \ldots + v_{j,n'} X_{n'}$ is orthogonal to $A^\ast
e_i = X_i$ for $i \neq j$ (and thus orthogonal to $W_j$), and has an
inner product of $1$ with $A^\ast e_j = X_j$.  Taking inner products of
$A^\ast v_j$ with the orthogonal projection of $X_j$ to $W_j$, we
conclude that
$$ v_{j,j} \dist(X_j,W_j)^2 = 1.$$
Since $v_{j,j} = v_j \cdot e_j = (AA^\ast)^{-1} e_j \cdot e_j$, the claim follows.
\end{proof}

\section{ A result of Dozier and Silverstein} \label{section:DS}

Here we reproduce Theorem 1.1 of \cite{doz} which we used in the end
of Section \ref{section:lowdim}.

\begin{theorem} \label{theorem:DS}\cite[Theorem 1.1]{doz} Let $c$ be a positive constant and  $\a$ be a random variable
with variance one. Let $X_n$ be an $n \times r$ random matrix
whose entries are iid copies of $\a$, where $r = (c+o(1))n$. Let $M_n$ be a random $n
\times r$ matrix independent from $X_n$ such that the ESD of $M_n
M_n^\ast$ converges to a limiting distribution $H$. Define $C_n:=
\frac{c}{n} (M_n + X_n) (M_n + X_n)^\ast$. Then the ESD of $C_n$
converges almost surely (and hence also in probability) to a limiting distribution $F$, whose Stieljes transform
$m(z):= \int \frac{1}{\lambda-z} d F(\lambda) $ satisfies the integral equation
\begin{equation} \label{eqn:DS} m = \int \frac{ d H(t)}{ \frac{t}{1+ cm} - (1+cm)z + (1-c)}
\end{equation}

\noindent for any $z \in \BBC$. \end{theorem}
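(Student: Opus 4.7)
The plan is to establish this via the Stieltjes transform method, which is the standard toolkit for identifying the limiting ESD of sample-covariance-type matrices. Write
\[
m_n(z) := \int \frac{1}{\lambda - z}\, dF_{C_n}(\lambda) = \frac{1}{n}\operatorname{trace}(C_n - zI)^{-1}
\]
for $z \in \BBC$ with $\Im z > 0$. By standard inverse-Stieltjes-transform theory, to prove almost sure convergence of the ESD of $C_n$ to a probability measure $F$ it suffices to show that $m_n(z)$ converges almost surely, for each $z$ in the upper half-plane, to a solution $m(z)$ of the integral equation \eqref{eqn:DS}, together with a uniqueness statement for that equation within the class of Stieltjes transforms.

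First I would carry out the concentration step: show $m_n(z) - \E m_n(z) \to 0$ almost surely. The clean way is the martingale-difference decomposition of Bai--Silverstein. Let $\E_k$ denote conditional expectation with respect to the first $k$ columns of $X_n$ (recall $M_n$ is independent of $X_n$, so one conditions on $M_n$ throughout). Writing
\[
m_n(z) - \E m_n(z) = \sum_{k=1}^{r} (\E_k - \E_{k-1}) m_n(z),
\]
each martingale difference is $O(1/n)$ in magnitude by the rank-one resolvent perturbation identity (adding/removing the $k$-th column of $M_n + X_n$ changes the resolvent by a rank-one operator, whose trace shift is controlled by $\Im z$). Azuma--Hoeffding (after a standard truncation of the entries of $X_n$ at $n^{\eta}$ for small $\eta$, which does not affect the limit thanks to unit variance) then gives almost-sure convergence to zero, via Borel--Cantelli.

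Second, I would derive the fixed-point equation for $\E m_n(z)$. Let $y_k = $ the $k$-th column of $M_n + X_n$, so $C_n = \frac{c}{n}\sum_{k=1}^r y_k y_k^\ast$. Set $D_k := C_n - zI - \tfrac{c}{n} y_k y_k^\ast$. The Sherman--Morrison identity yields
\[
(C_n - zI)^{-1} y_k = \frac{D_k^{-1} y_k}{1 + \frac{c}{n} y_k^\ast D_k^{-1} y_k},
\]
and using $zI = C_n - \tfrac{c}{n}\sum_k y_k y_k^\ast$ one obtains the trace identity
\[
1 + z\, m_n(z) = \frac{c}{r}\sum_{k=1}^{r} \frac{\frac{c}{n} y_k^\ast D_k^{-1} y_k}{1 + \frac{c}{n} y_k^\ast D_k^{-1} y_k}.
\]
Because the $k$-th column of $X_n$ is independent of $D_k$, the quadratic form concentration lemma (a standard consequence of Hanson--Wright or simply a second-moment computation, using $\Var(x) = 1$) gives
\[
\tfrac{c}{n} y_k^\ast D_k^{-1} y_k = \tfrac{c}{n}\operatorname{trace} D_k^{-1} + \tfrac{c}{n} m_k^\ast D_k^{-1} m_k + o(1)
\]
almost surely, where $m_k$ is the $k$-th column of $M_n$, and then $\tfrac{c}{n}\operatorname{trace} D_k^{-1} \approx c\, m_n(z)$.

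Third, pushing the above through the sum and using the convergence of the ESD of $M_n M_n^\ast$ to $H$ converts the right-hand side into an integral against $H$; after simplification, any subsequential limit $m(z)$ must satisfy \eqref{eqn:DS}. The main technical obstacle, and the heart of the argument, is precisely this step: one must show that the $m_k^\ast D_k^{-1} m_k$ term aggregates cleanly into an integral against $H$ uniformly in $k$, which requires a joint spectral estimate comparing $D_k^{-1}$ to a ``deterministic equivalent'' that is diagonal in the eigenbasis of $M_n M_n^\ast$ (the information-plus-noise structure couples $M_n$ and $X_n$ in a way that rules out a direct free-probability reduction). Finally, I would close the argument by proving that \eqref{eqn:DS} admits at most one solution $m$ mapping the upper half-plane to itself with $|m(z)| \le (\Im z)^{-1}$; this is a contraction-mapping argument on a suitable domain and forces the full sequence (not merely subsequences) to converge to the unique $m$, completing the proof.
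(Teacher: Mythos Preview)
The paper does not prove this theorem. Theorem~\ref{theorem:DS} is reproduced in Appendix~\ref{section:DS} purely for the reader's convenience, with the explicit statement ``Here we reproduce Theorem 1.1 of \cite{doz}''; no argument is given, and the result is used as a black box in Section~\ref{section:intermediatet} (Lemma~\ref{lemma:intermediatet}). So there is no ``paper's own proof'' to compare against.

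That said, your outline is a faithful sketch of the Dozier--Silverstein argument in \cite{doz}: martingale-difference concentration of $m_n(z)$ about its mean, column-removal via Sherman--Morrison, quadratic-form concentration to replace $\frac{c}{n} y_k^\ast D_k^{-1} y_k$ by $c\,m_n(z) + \frac{c}{n} m_k^\ast D_k^{-1} m_k$, and finally a uniqueness argument for the fixed-point equation \eqref{eqn:DS}. You have also correctly identified the genuinely hard step, namely the passage from the discrete sum of quadratic forms $\frac{c}{n} m_k^\ast D_k^{-1} m_k$ to an integral against $H$; this is exactly where \cite{doz} spends most of its effort, constructing a deterministic comparison resolvent and controlling the error uniformly in $k$. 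Your sketch does not resolve this step (you flag it as ``the main technical obstacle'' but do not carry it out), so as written it is an outline rather than a proof. For the purposes of the present paper, however, nothing more is needed: the authors invoke \cite{doz} and move on.
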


\begin{remark} \label{remark:DS1}
The theorem still holds if we restrict the size $n$ of the matrices to
an infinite subsequence $n_1 < n_2 < \dots $ of positive integers.
One can show this by, for example, artificially filling in the missing
indices or repeat the proof of Theorem \ref{theorem:DS} under
this restriction.
\end{remark}

\begin{remark} \label{remark:DS2}
In \eqref{eqn:DS}, $H$ appears, but the actual definition of $M_n$
is irrelevant. Thus, one can conclude that if $M_n$ and $M_n'$ are
such that the ESD's of $M_n M_n^{\ast}$ and $M_n' M_n'^{\ast}$ tend
to the same limit, then the ESDs of $\frac{c}{n} (M_n + X_n) (M_n +
X_n)^\ast $ and $\frac{c}{n} (M_n' + X_n) (M_n' + X_n)^\ast$ also tend to
the same limit.
\end{remark}

\begin{remark} \label{remark:DS3}
It was mentioned by Speicher \cite{spe} and also Krishnapur
(private communication) that Theorem \ref{theorem:DS} can be proved
using free probability, which is different from the approach in
\cite{doz}.
\end{remark}

\section{Using a Hermitian invariance principle \\ (by Manjunath Krishnapur)}\label{krish-sec}

The authors have shown invariance principles for ESDs of several
non-Hermitian matrix models. As in earlier papers, the proof goes
through Hermitian matrices, but does not need rates of convergence
of the Hermitian ESDs, thanks to  new ideas such as
Lemma~\ref{highdim}. However, because of the use of
Theorem~\ref{theorem:DS}, it may appear that a limiting result for
the associated Hermitian matrices is necessary to carry the program
through. In this appendix, we point out how one may obtain a 
weak invariance principle for ESDs of non-Hermitian matrices by using an
invariance principle  for Hermitian matrices due to
Chatterjee~\cite{chatterjee}, in cases where a convergence result
such as Theorem~\ref{theorem:DS} is not available. As mentioned
earlier, other parts of the proof do not require the entries are
iid. Thus, as a consequence, we can obtain a weak invariance
principle for a random matrix model with independent but not
identically distributed entries.

We need the following definition from \cite[Section 2]{TV-circular}.

\begin{definition}[Controlled second moment]  Let $\kappa \geq 1$.
A complex random variable $\a$ is said to have
\emph{$\kappa$-controlled second moment} if one has the upper bound
$$ \E |\a|^2 \leq \kappa$$
(in particular, $|\E \a| \leq \kappa^{1/2}$), and the lower bound
\begin{equation}\label{eyot}
 \E \Re( z \a - w )^2 \I(|\a| \leq \kappa) \geq \frac{1}{\kappa} \Re(z)^2
\end{equation}
for all complex numbers $z,w$.
\end{definition}

{\it Example.} The Bernoulli random variable ($\P(\a=+1) = \P(\a=-1)
= 1/2$) has
 $1$-controlled second moment.  The condition \eqref{eyot} asserts in particular that
  $\a$ has variance at least $\frac{1}{\kappa}$, but also asserts that a significant portion of
  this variance occurs inside the event $|\a| \leq \kappa$, and also contains some more
  technical phase information about the covariance matrix of $\Re(\a)$ and $\Im(\a)$.

\begin{theorem}\label{thm:meanvar} Let $M_n=\left(\mu^{(n)}_{i,j}\right)_{i,j\le n}$
and $C_n=\left(\sigma^{(n)}_{i,j}\right)_{i,j\le n}$ be constant (i.e. deterministic) matrices  satisfying
\begin{enumerate}
\item $\sup_n n^{-2}\|M_n\|_2^2 <\infty$,
\item $a\le \sigma^{(n)}_{i,j} \le b$ for all $n,i,j$ for some $0<a<b<\infty$.
\end{enumerate}
Given a matrix  $\x=\left(x_{i,j}\right)_{i,j\le n}$ set
 \begin{equation*}
   A_n(\x)=\frac{1}{\sqrt{n}}\left( M_n+C_n\cdot \x \right) = \frac{1}{\sqrt{n}}
   \left(\mu_{i,j}^{(n)}+\sigma_{i,j}^{(n)}x_{i,j} \right)_{i,j\le n}.
 \end{equation*}
 (here "$\cdot$" denotes Hadamard product).

Now suppose that $x_{i,j}^{(n)}$ are independent complex-valued
random variables with $\E[x_{i,j}^{(n)}]=0$ and
$\E[|x_{i,j}^{(n)}|^2]=1$ and that  $y_{i,j}^{(n)}$ are independent
random variables, also having zero mean and unit variance. 

Assume furthermore that both $x_{ij}^{(n)}$ and $y_{ij}^{(n)}$ have $\kappa$-controlled second moment for some constant $\kappa >0$. 

Assume also Pastur's condition
\begin{equation}\label{eq:pastur}
 \frac{1}{n^2}\sum_{i,j=1}^n \E\left[ |x_{i,j}^{(n)}|^2 \I{|x_{i,j}^{(n)}|\ge \epsilon \sqrt{n}}\right] \longrightarrow 0 \hspace{1cm} \mb{ for all }\epsilon>0.\end{equation}
and the same for $\y$ in place of $\x$. Then,
\begin{equation*}
 \mu_{A_n(\x)} - \mu_{A_n(\y)} \rightarrow 0
\end{equation*}
in the sense of probability.
\end{theorem}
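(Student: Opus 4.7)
The plan is to run the replacement principle (Theorem \ref{theorem:replacement}, in-probability version) exactly as in the proof of Proposition \ref{lemma:determinant}. Condition (i) of the replacement principle follows from $\sup_n n^{-2}\|M_n\|_2^2 < \infty$, the uniform boundedness $\sigma_{i,j}^{(n)} \in [a,b]$, and a second-moment/weak law calculation for $n^{-2}\|C_n\cdot \x\|_2^2 = n^{-2}\sum_{i,j}(\sigma_{i,j}^{(n)})^2|x_{i,j}^{(n)}|^2$, together with the analogous bound for $\y$. After translating $M_n$ by $\sqrt{n}\,zI$ we may take $z=0$, so it suffices to prove that $\frac{1}{n}\log|\det A_n(\x)| - \frac{1}{n}\log|\det A_n(\y)|$ tends to $0$ in probability.

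Expand each log-determinant as $\sum_{i=1}^n \log\dist(n^{-1/2}X_i,V_i)$ as in Section \ref{detsec} and split the sum into (a) the last $n^{0.99}$ rows, (b) rows $(1-\delta)n \le i \le n - n^{0.99}$, and (c) rows $1\le i\le (1-\delta)n$. Part (a) is controlled by the polynomial least-singular-value bound Lemma \ref{lemma:lsv}: the arguments of \cite{TV-circular,TV-lsv} use only independence and $\kappa$-controlled second moments of the entries, not identical distribution. Part (b) is controlled by the distance estimate Proposition \ref{xw}: Talagrand's inequality is insensitive to the joint law beyond independence, the truncation at level $n^{0.1}$ is legitimized by the Pastur condition \eqref{eq:pastur}, and the $\Omega(\sqrt{n-d})$ lower bound on the median of $\dist(\tilde X,W)$ requires only that the truncated entries have variance $1-o(1)$ (guaranteed by the $\kappa$-control together with \eqref{eq:pastur}) and are independent. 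None of these steps needs the rows to be identically distributed.

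The only essential use of iid in Section \ref{section:lowdim} is inside Lemma \ref{lemma:intermediatet}, where after the interlacing reduction from $A_{n,n'}$ to $A_{n,n}$ one must show
$$\int \psi(t)\,\log t\;d\bigl[\mu_{\frac{1}{n}A_n A_n^\ast} - \mu_{\frac{1}{n}B_n B_n^\ast}\bigr](t)\;\longrightarrow\;0$$
in probability, where $\psi$ is smooth and supported in $[\eps^4/2,2R_\eps]$ and $A_n=M_n+C_n\cdot\x$, $B_n=M_n+C_n\cdot\y$. In the main text this was delivered by Dozier--Silverstein, which pins both ESDs to a common deterministic limit. Since no such limit need exist when the entries are not identically distributed, the plan is to invoke instead Chatterjee's Hermitian invariance principle \cite{chatterjee}, applied to the Hermitization
$$H_n(\x) := \begin{pmatrix} 0 & \tfrac{1}{\sqrt n}(M_n+C_n\cdot\x) \\ \bigl(\tfrac{1}{\sqrt n}(M_n+C_n\cdot\x)\bigr)^{\ast} & 0 \end{pmatrix}$$
and the analogous $H_n(\y)$. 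These are $2n\times 2n$ Hermitian matrices whose above-diagonal entries are independent, with matching means $\tfrac{1}{\sqrt n}\mu^{(n)}_{i,j}$ and matching variances $\tfrac{1}{n}(\sigma^{(n)}_{i,j})^2$. Their squared-eigenvalue empirical distributions are precisely $\mu_{\frac{1}{n}A_nA_n^\ast}$ and $\mu_{\frac{1}{n}B_nB_n^\ast}$, so Chatterjee's comparison of Stieltjes transforms yields convergence in probability of the integrals of $\psi(t)\log t$ against the two measures, exactly replacing the Dozier--Silverstein step. The three other $t$-regimes (very large $t$, moderately small $t$, very small $t$) are controlled word-for-word as in Sections \ref{section:larget}, \ref{section:moderatet}, \ref{section:smallt}, since those arguments rely only on the tightness and least-singular-value bounds already discussed.

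The main obstacle is verifying Chatterjee's invariance principle in the present setting, where the entries are independent but neither identically distributed nor assumed to have uniformly bounded fourth moments. The standard remedy is a truncation at scale $\eps\sqrt{n}$: the Pastur condition \eqref{eq:pastur} guarantees that the $L^2$-mass removed is $o(1)$, so by Hoffman--Wielandt the ESDs of the Hermitizations of the original and truncated matrices differ negligibly in the Lévy metric; the $\kappa$-controlled second moment prevents the truncation from collapsing the variance; and after recentering the truncated entries retain matched $O(1)$ variances and now have bounded supports, placing them squarely within the hypotheses of Chatterjee's principle. Since this truncation-and-compare route yields only in-probability control, the conclusion of the theorem is correspondingly only in probability, matching the stated assertion.
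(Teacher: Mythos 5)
Your proposal matches the paper's argument essentially step by step: verify condition~(i) of Theorem~\ref{theorem:replacement} directly from the hypotheses on $M_n$, $C_n$; rerun the row/singular-value decomposition of Proposition~\ref{lemma:determinant} using the non-iid least-singular-value bound and the Talagrand distance estimate (with Lipschitz constant~$b$ and variance lower bound~$a^2$); and replace the single iid-dependent step, the intermediate-$t$ regime of Lemma~\ref{lemma:intermediatet}, by Chatterjee's invariance principle applied to the Hermitization $H_n(\x)$. This is precisely what the paper does via Lemma~\ref{lem:hemitianinvariance}, so the proposal is correct and follows the same route; your extra paragraph on truncation and Hoffman--Wielandt is a reasonable elaboration of the hypothesis check that the paper compresses into ``following the same calculations as in \cite{chatterjee}.''
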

Some remarks.
\begin{enumerate}
\item If we assume that $x_{i,j}^{(n)}$ are i.i.d. and $y_{i,j}^{(n)}$ are i.i.d then Pastur's condition is obviously satisfied.  Further, the condition of $\kappa$-controlled second moment is also not necessary (see the first step in the proof sketch). 
\item Although the weak invariance principle in the paper uses only subsequential limits (see Remark \ref{detsec-remark}), it does use Theorem~{B.1} to say that subsequential limits are the same for $\x$ as for $\y$. Hence we need some changes in the proof in order to establish Theorem \ref{thm:meanvar}, which we do in this appendix.
\item This highlights the important new ideas of the paper, such as Lemma~\ref{highdim}, which eliminate the need for rates of convergence of  ESDs of the Hermitian matrices $(A_n-zI)^*(A_n-zI)$. This is unlike all earlier papers in the subject that followed Bai's approach and required such rates (eg., \cite{bai-first},\cite{TV-circular},\cite{gotze},\cite{PZ}).  The need for rates made it impossible to use the invariance principle for Hermitian matrices as we shall do now.
\item Take $C_n=J$ (all ones matrix) and $M_n=0$. Then Pastur's condition (\ref{eq:pastur}) implies almost sure convergence of the ESD of $A_n(\x)^*A_n(\x)$ (see \cite[Theorem 3.9]{bai}). For general $C_n$, since we use Chatterjee's invariance principle which assumes Pastur's condition but only gives weak invariance, we are able to assert only weak invariance for the non-Hermitian ESDs also. Thus, there is some room for improvement here, namely, to strengthen the conclusion of Theorem~\ref{thm:meanvar} to almost sure convergence.
\item Does ESD of $A_n(\x)$ converge? Perhaps so, provided the singular values of $C_n-zI$ have a limiting measure for every $z$. In \cite{KV} we have discussed some easy-to-check sufficient conditions on $C_n$ which implies convergence.  
\end{enumerate}
The following lemma is a ``Wishart'' analogue of the computations in section 2 of \cite{chatterjee} which considers Wigner matrices. As in that paper, the idea is to consider the Stieltjes transform of the ESD of $A_n(\x)^*A_n(\x)$ as a  function of $\x$. However a slight twist is needed as compared to Wigner matrices, because the entries of $A_n(\x)^*A_n(\x)$ are quadratic in $\x$ whereas the invariance principle we invoke requires bounds on the sup-norm of derivatives of the Stieltjes transform.
\begin{lemma}\label{lem:hemitianinvariance} Let $\x$ and $\y$ be as in Theorem~\ref{thm:meanvar}. Let $\nu_n^\x$ and $\nu_n^\y$ be the ESDs of $A_n(\x)^*A_n(\x)$ and $A_n(\y)^*A_n(\y)$.  Then $\nu_n^{\x}-\nu_n^{\y} \rightarrow 0$ weakly as $n\rightarrow \infty$.
\end{lemma}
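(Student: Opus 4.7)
\textbf{Proof proposal for Lemma \ref{lem:hemitianinvariance}.}

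The plan is to reduce the claim to a Hermitian invariance principle via the standard dilation trick, so that Chatterjee's invariance theorem \cite{chatterjee} can be applied. Set
\[
H_n(\x) := \begin{pmatrix} 0 & A_n(\x) \\ A_n(\x)^{\ast} & 0 \end{pmatrix},
\]
which is a $2n\times 2n$ Hermitian matrix that is now \emph{linear} in the entries of $\x$ (the whole point of the dilation, since $A_n(\x)^{\ast}A_n(\x)$ is quadratic and hence outside Chatterjee's framework). Its eigenvalues are $\{\pm\sigma_i(A_n(\x)) : 1\le i\le n\}$, so if $\rho_n^{\x}$ denotes the ESD of $H_n(\x)$, then $\nu_n^{\x}$ is the pushforward of $\rho_n^{\x}$ under the continuous map $t\mapsto t^2$. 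Pushforward by a continuous function is weakly continuous, so it suffices to show $\rho_n^{\x}-\rho_n^{\y}\to 0$ weakly in probability.

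Next I would invoke Chatterjee's invariance principle on the pair $H_n(\x)$, $H_n(\y)$. The off-diagonal blocks of $H_n(\x)$ consist of the independent entries $\frac{1}{\sqrt{n}}(\mu_{ij}^{(n)}+\sigma_{ij}^{(n)}x_{ij})$, whose means $\mu_{ij}^{(n)}/\sqrt{n}$ and variances $(\sigma_{ij}^{(n)})^2/n$ coincide with those of $H_n(\y)$; the diagonal blocks are deterministic zeros, which is admissible. The variance profile lies in $[a^2/n,\,b^2/n]$ by hypothesis~(2), and Pastur's condition \eqref{eq:pastur} (assumed for both $\x$ and $\y$) supplies precisely the Lindeberg-type bound needed by Chatterjee. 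The conclusion is that for every fixed $z$ with $\Im z>0$,
\[
\E\bigl[s_{H_n(\x)}(z)\bigr] - \E\bigl[s_{H_n(\y)}(z)\bigr] \longrightarrow 0,
\]
where $s_H(z):=\frac{1}{2n}\trace(H-zI)^{-1}$ is the normalized Stieltjes transform.

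To upgrade convergence of expectations to convergence in probability, I would use a routine concentration argument. Changing a single entry $x_{ij}$ changes $H_n(\x)$ by a matrix of rank at most $2$, and by the resolvent identity the corresponding change in $s_{H_n(\x)}(z)$ is $O(n^{-1}|\Im z|^{-2})$ provided the entries are bounded. Truncating at $n^{1/2-\delta}$ (which, thanks to \eqref{eq:pastur}, introduces only negligible error in both mean and variance) and applying an Azuma--Hoeffding martingale-difference bound over the $n^2$ coordinates of $\x$ yields $\Var(s_{H_n(\x)}(z))=o(1)$. Chebyshev then gives $s_{H_n(\x)}(z)-\E[s_{H_n(\x)}(z)]\to 0$ in probability, and similarly for $\y$. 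Combining with Chatterjee yields $s_{H_n(\x)}(z)-s_{H_n(\y)}(z)\to 0$ in probability for each $z$ in the upper half-plane. Tightness of $\rho_n^{\x},\rho_n^{\y}$ follows from $\frac{1}{2n}\trace(H_n(\x)^2)=\frac{1}{n^2}\|A_n(\x)\|_2^2$ being almost surely bounded (using hypothesis (1), hypothesis (2), and the law of large numbers), so standard Stieltjes inversion produces the desired weak convergence $\rho_n^{\x}-\rho_n^{\y}\to 0$ in probability, and hence $\nu_n^{\x}-\nu_n^{\y}\to 0$.

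The main obstacle I expect is the careful verification of Chatterjee's hypotheses after the dilation: in particular, handling the deterministic block-zero structure and the nonzero but identical mean profile coming from $M_n$, and ensuring that the truncation/concentration step preserves Pastur's condition while the rank-two resolvent bound survives in the resulting martingale. The $\kappa$-controlled second moment hypothesis appears not to play a role in this particular lemma and seems to be included only for compatibility with the rest of the appendix; I would note this explicitly rather than try to use it.
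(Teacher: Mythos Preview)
Your approach is exactly the paper's: introduce the Hermitian dilation $H_n(\x)$, observe that it is linear in the entries of $\x$ so that Chatterjee's Lindeberg scheme applies to $f(\x)=\frac{1}{2n}\trace(H_n(\x)-\alpha I)^{-1}$, and then pull the conclusion back to $\nu_n$ via the squaring map. The paper's proof stops at ``$\E[f(\x)]-\E[f(\y)]\to 0$ suffices'' and does not spell out the concentration step you add; your added detail is welcome, but one bound needs tightening. With the bounded-difference estimate $O(n^{-1}|\Im z|^{-2})$ per coordinate and an Azuma argument over all $n^2$ coordinates, the sum of squared increments is $O(|\Im z|^{-4})$, not $o(1)$, so the stated variance bound does not follow. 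The fix is painless: either (i) note that after truncation at $n^{1/2-\delta}$ the resolvent identity actually gives the sharper per-entry bound $O(n^{-1-\delta}|\Im z|^{-2})$, so the sum of squares is $O(n^{-2\delta})$; or (ii) more cleanly, run the martingale over the $n$ rows of $\x$ rather than the $n^2$ entries, since replacing an entire row of $A_n(\x)$ is still a rank-$2$ perturbation of $H_n(\x)$ and hence moves $f$ by $O(n^{-1}|\Im z|^{-1})$ via interlacing, giving sum of squares $O(n^{-1}|\Im z|^{-2})$ without any truncation.
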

\begin{proof} Let
\begin{equation*}
 H_n(\x) =  \left[ \begin{array}{cc} 0 & A_n(\x) \\ A_n(\x)^* & 0 \end{array} \right]
\end{equation*}
have ESD $\theta_n^\x$. The eigenvalues of $H_n(\x)$ are exactly the positive and negative square roots of the eigenvalues of $A_n(\x)^*A_n(\x)$. Thus we must show that $\theta_n^{\x}-\theta_n^{\y}\rightarrow 0$ weakly, in probability.  Fix any $\alpha$ in the upper half plane and let $f(\x) := \frac{1}{2n}\mb{Tr}(H_n(\x)-\alpha I)^{-1}$. The proof is complete if we show that $\E[f(\x)]-\E[f(\y)]\rightarrow 0$ for any $\alpha$ with $\Im\{\alpha\}>0$. This can be done by following the same calculations as in \cite{chatterjee}. It works because the entries of $H_n(\x)$ are linear in $\x$ and hence the first partial derivative of $H_n$ with respect to any $x_{i,j}$ is a constant matrix. One must also use the upper bound on $\sigma_{i,j}$ to bound the derivatives of $f$.
\end{proof}
\noindent{\bf Remark:} Obviously the same conclusion holds for $A_n-zI$, just by absorbing $zI$ into $M_n$.
\begin{proof}[Proof of Theorem~\ref{thm:meanvar}] The conditions on $M_n$ and $C_n$ show that
the first condition of Theorem~\ref{theorem:replacement} is satisfied
 (where the two matrices $A_n$ and $B_n$ are now $A_n(\x)$ and $A_n(\y)$).

 Thus we only need to show an analogue of Proposition~{2.2} (only the weak part). We sketch the modifications needed.
\begin{enumerate}
\item Lemma~\ref{lemma:lsv} can be proved under independence and
$\kappa$-controlled second moment  without i.i.d. assumption (see
\cite[Theorem 2.5]{TV-circular}). If we make i.i.d. assumption, then Lemma~\ref{lemma:lsv} is itself applicable, which explains the first remark after the statement of the theorem.

 The upper bounds on singular values in (\ref{bns}) are very general and hold in our setting for the same reasons.  Hence we reduce to Lemma~\ref{highdim} and Lemma~\ref{lowdim} as in the paper.

\item The high-dimensional contribution (analogue of Lemma~\ref{highdim})
is proved almost the same way. In the proof of the lower tail bound (Proposition~\ref{xw})
 use the bounds on $\sigma_{i,j}^{(n)}$ appropriately.
 In particular, we get a lower bounds of $a^2(n-d)$ for the second moment
 of $\mb{dist}(X,W)$ in Lemma~\ref{lemma:secondmomentofdist},
 and in applying Theorem~\ref{theorem:Talagrand} we get a Lipschitz constant of $b$ for $F(X)=\mb{dist}(X,W)$.

\item In the low-dimensional contribution (Lemma~\ref{lowdim}), the calculations in sections
\ref{section:larget}, \ref{section:moderatet} and \ref{section:smallt} are exactly
 as before (in section~\ref{section:moderatet}, we use the concentration result already outlined in the previous step).
\item That leaves section~\ref{section:intermediatet}, which is the only step that
is differently handled. Here we apply Lemma~\ref{lem:hemitianinvariance} instead of quoting Theorem~\ref{theorem:DS}.
\end{enumerate}
\end{proof}

{\it Acknowledgements.} The first author is supported by a grant
from the Macarthur Foundation and by NSF grant DMS-0649473. The
second author is supported by an NSF Career Grant. The authors
would like to thank M. Krishnapur for useful
discussions and his careful reading of an early draft, and Ken Miller, Ricky, and weiyu for further corrections.
We also like to thank P. Matchett Wood for providing the figures in the introduction.

\end{document}